\newcommand{\linelabel}[1]{}
\pgfplotsset{compat=newest}
\newcommand{\Htwo}{{\mathcal{H}_2}}
\newcommand{\iu}{\mathrm{i}\mkern1mu}
\newcommand{\PHtwo}{{PH2}}
\newcommand{\TheTitle}{$\Htwo$-Optimal Model Reduction \\ Using Projected Nonlinear Least Squares}
\newcommand{\TheAuthors}{Jeffrey M. Hokanson and Caleb C. Magruder}
\title{{\TheTitle}\thanks{Submitted to the editors 15 June 2018.
\funding{The first author's work is partially supported by DARPA's program Enabling Quantification of Uncertainty in Physical Systems
(EQUiPS). 
}}}
\author{Jeffrey M. Hokanson\thanks{
	Department of Computer Science, 
	University of Colorado Boulder,
	1111 Engineering Dr, Boulder, CO 80309,
	(\email{Jeffrey.Hokanson@colorado.edu}).}
	\and Caleb C. Magruder\thanks{
	The MathWorks Inc.,
	1 Apple Hill Drive, Natick, MA, 01760-2098
	(\email{ccmagruder@gmail.com})}
}
\begin{document}

\maketitle

\begin{abstract}
In many applications throughout science and engineering,
model reduction plays an important role 
replacing expensive large-scale linear dynamical systems 
by inexpensive reduced order models that capture key features of the original, full order model. 
One approach to model reduction finds reduced order models that are locally optimal approximations 
in the $\Htwo$ norm,
an approach taken by the Iterative Rational Krylov Algorithm (IRKA) among others.
Here we introduce a new approach for $\Htwo$-optimal model reduction using 
the projected nonlinear least squares framework previously introduced in 
[J.\ M.\ Hokanson, SIAM J.\ Sci.\ Comput.\ 39 (2017), pp.~A3107--A3128].
At each iteration, we project the $\Htwo$ optimization problem onto a finite-dimensional subspace
yielding a weighted least squares rational approximation problem.
Subsequent iterations append this subspace
such that the least squares rational approximant 
asymptotically satisfies the first order necessary conditions of the original, $\Htwo$ optimization problem.
This enables us to build reduced order models
with similar error in the $\Htwo$ norm 
but using far fewer evaluations of the expensive, full order model
compared to competing methods.
Moreover, our new algorithm only requires access to the transfer function of the full order model,
unlike IRKA which requires a state-space representation or 
TF-IRKA which requires both the transfer function and its derivative. 
Applying the projected nonlinear least squares framework to the $\Htwo$-optimal model reduction problem 
open new avenues for related model reduction problems.
\end{abstract}

\begin{keywords}
	model reduction,
	$\Htwo$ approximation,
	projected nonlinear least squares, 
	rational approximation,
	transfer function
\end{keywords}
\begin{AMS}	
	41A20, 
	46E22, 
	90C53, 
	93A15, 
	93C05, 
	93C15 
\end{AMS}
\begin{DOI}

\end{DOI}

\section{Introduction}
Model reduction replaces an expensive, high-fidelity
simulation with a low-dimensional, inexpensive surrogate.
Although the cost of building this \emph{reduced order model} (ROM)
is often comparable to simulating of the original \emph{full order model} (FOM),
this cost is justified in two settings:
many-query settings, such as optimization and uncertainty quantification,
and in real-time applications where the full order simulation is unaffordable.
There are a wide variety of approaches for model reduction,
such as Balanced Truncation, Proper Orthogonal Decomposition (POD), 
and interpolatory methods;
for an extensive overview, see~\cite{BCOW17}.
In this paper, we address the $\Htwo$-optimal model reduction problem---%
a model reduction approach requiring both the full and reduced order models
to be described by a stable, linear, time-invariant dynamical system.
In this setting, 
the action of these models are completely defined through either
their \emph{impulse response} $h$
or their \emph{transfer function} $H$, the Laplace transform of $h$.
Under these assumptions, 
the transfer function $H$ is an element of the $\Htwo$ Hilbert space:
the space of functions analytic in the closed right half plane along with the inner product
\begin{equation}\label{eq:H2ip}
	\langle F, G \rangle_\Htwo \coloneqq \frac{1}{2\pi} \int_{-\infty}^\infty \conj{F(\iu \omega)} G(\iu \omega) \D w,
	\quad F, G\in \Htwo,
	\quad \iu \coloneqq \sqrt{-1}.
\end{equation}
Given a space of candidate reduced order models 
whose transfer functions are in the set $\set S\subset \Htwo$,
the goal of $\Htwo$-optimal model reduction
is to find the element $H_r\in \set S$
that is closest to the transfer function of the full order model $H$
in the $\Htwo$-norm
\begin{equation}\label{eq:H2MOR_gen}
	\min_{H_r\in \set S} \|H  - H_r\|_\Htwo^2
	\quad 
	\text{where} \quad \|F\|_\Htwo^2 \coloneqq \langle F, F\rangle_\Htwo.
\end{equation}
As we consider a case where $\set S$ is nonconvex,
we consider local optimizers satisfying the first order optimality conditions
as solutions to~\cref{eq:H2MOR_gen}.

Here we employ the \emph{projected nonlinear least squares framework} introduced 
by Hokanson~\cite{Hok17} to solve the $\Htwo$-optimal model reduction problem~\cref{eq:H2MOR_gen}.
This framework replaces~\cref{eq:H2MOR_gen}
by a sequence of small, finite-dimensional \emph{projected problems}
whose optimizers converge to an optimizer of \cref{eq:H2MOR_gen}.
Each projected problem introduces an orthogonal projector $P(\ve\mu)$
into the norm, solving
\begin{equation}
	\label{eq:H2MOR_gen_proj}
		\min_{H_r \in \set S}
			\|P(\ve\mu)[H - H_r]\|_\Htwo^2.
\end{equation}
Exploiting the \emph{reproducing kernel} structure of $\Htwo$ 
(see discussion in \cref{sec:properties:RKHS}),
we construct these projectors
using kernel vectors $v[\mu] \in \Htwo$ where $\langle v[\mu], F\rangle_\Htwo = F(\mu)$ 
such that the range of $P(\ve\mu)$ is spanned by $\lbrace v[\mu_j]\rbrace_{j=1}^n$.
This allows the objective of~\cref{eq:H2MOR_gen_proj}
to be rewritten as a weighted least squares problem
involving the mismatch $H - H_r$ evaluated at the \emph{interpolation points} $\ve\mu = [\mu_1,\ldots,\mu_n]$ 
\begin{equation}\label{eq:projected_equiv}
	\|P(\ve\mu)(H - H_r)\|_\Htwo^2 
	= \left\|
		\ma M(\ve\mu)^{-\frac12}
		\left(
		\begin{bmatrix}
			H(\mu_1) \\ \vdots \\ H(\mu_n)
		\end{bmatrix}
		-
		\begin{bmatrix}
			H_r(\mu_1) \\ \vdots \\ H_r(\mu_n)
		\end{bmatrix}
		\right)
	\right\|_2^2,
\end{equation}
where $\ma M(\ve\mu)$ is a Gram matrix of  
the kernel vectors $v[\mu_j]$ acting to preserve the norm;
i.e., $[\ma M(\ve\mu)]_{j,k} = \langle v[\mu_j],v[\mu_k]\rangle_\Htwo$
with explicit definition in~\cref{eq:Mdef}.
As the projected problem is a finite dimensional nonlinear least squares problem,
we can find local minimizers of~\cref{eq:H2MOR_gen_proj}
using standard optimization techniques given a parameterization of $\set S$;
we refer to this as \emph{inner loop}.

Around this inner loop 
we construct an \emph{outer loop}
updating the projector $P(\ve\mu)$
such that local optimizers of the projected problem~\cref{eq:H2MOR_gen_proj} converge 
to a local optimizer of the original problem~\cref{eq:H2MOR_gen}.
The following lemma provides the key insight 
connecting local optimizers of these two problems.
\begin{lemma}\label{lem:projection}
	Suppose $\set S\subset \Htwo$ admits a parameterization by $H_r(\cdot; \ve \theta)$
	such that $\set S = \lbrace H_r(\cdot;\ve\theta) : \ve \theta \in \set D\rbrace$
	and $\set D\subset \R^p$ is open. 
	Let $\set T(\ve\theta)$ denote subspace containing the derivatives of $H_r$ with
	respect to this parameterization at $\ve\theta$,
	\begin{equation}
	\set T(\ve\theta) \coloneqq
	\Span 
	\left\lbrace 
		\tfrac{\partial H_r(\cdot, \ve\theta)}{\partial \theta_1}, 
		\ldots, 
		\tfrac{\partial H_r(\cdot, \ve\theta)}{\partial \theta_p} 
	\right\rbrace.
	\end{equation}
	If $H_r(\cdot, \hve \theta)$ is a local optimizer of the projected problem~\cref{eq:H2MOR_gen_proj}
	and $\Range P(\ve\mu) \supseteq \set T(\hve\theta)$,
	then $H_r(\cdot, \hve \theta)$ is also a local optimizer of the original problem~\cref{eq:H2MOR_gen}.
\end{lemma}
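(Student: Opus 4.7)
The plan is to reduce both optimization problems to their first-order necessary conditions---which the paragraph preceding the lemma flags as the operative notion of ``local optimizer''---and then exploit that $P(\ve\mu)$ is both self-adjoint and idempotent as an orthogonal projector on $\Htwo$. The hypothesis $\set T(\hve\theta)\subseteq \Range P(\ve\mu)$ is exactly the algebraic ingredient that will allow the projector to be inserted or removed whenever it acts on a derivative $\partial H_r/\partial\theta_i$ evaluated at $\hve\theta$.

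First, I would differentiate the projected objective in $\theta$ at $\hve\theta$. Using bilinearity of $\langle\cdot,\cdot\rangle_\Htwo$ together with $P(\ve\mu)^2=P(\ve\mu)=P(\ve\mu)^*$, the assumption that $H_r(\cdot,\hve\theta)$ is a local optimizer of~\cref{eq:H2MOR_gen_proj} yields the stationarity conditions
\begin{equation*}
\left\langle P(\ve\mu)[H - H_r(\cdot,\hve\theta)],\ \tfrac{\partial H_r(\cdot,\hve\theta)}{\partial\theta_i} \right\rangle_\Htwo = 0, \qquad i=1,\ldots,p.
\end{equation*}
The analogous calculation on the original objective $\|H - H_r(\cdot,\ve\theta)\|_\Htwo^2$ shows that $H_r(\cdot,\hve\theta)$ is a critical point of~\cref{eq:H2MOR_gen} if and only if
\begin{equation*}
\left\langle H - H_r(\cdot,\hve\theta),\ \tfrac{\partial H_r(\cdot,\hve\theta)}{\partial\theta_i} \right\rangle_\Htwo = 0, \qquad i=1,\ldots,p,
\end{equation*}
equivalently $H-H_r(\cdot,\hve\theta)\perp \set T(\hve\theta)$.

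Second, I would close the loop by using the containment hypothesis. Since $\partial H_r(\cdot,\hve\theta)/\partial\theta_i\in\set T(\hve\theta)\subseteq\Range P(\ve\mu)$, each derivative is a fixed point of $P(\ve\mu)$. Inserting this identity and moving $P(\ve\mu)$ across the inner product by self-adjointness gives
\begin{equation*}
\left\langle H-H_r(\cdot,\hve\theta),\ \tfrac{\partial H_r}{\partial\theta_i}\right\rangle_\Htwo
=\left\langle H-H_r(\cdot,\hve\theta),\ P(\ve\mu)\tfrac{\partial H_r}{\partial\theta_i}\right\rangle_\Htwo
=\left\langle P(\ve\mu)[H-H_r(\cdot,\hve\theta)],\ \tfrac{\partial H_r}{\partial\theta_i}\right\rangle_\Htwo=0,
\end{equation*}
which is precisely the first-order condition for~\cref{eq:H2MOR_gen}.

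The main obstacle is bookkeeping rather than substance: confirming that the weakened notion of local optimizer (first-order criticality) is the one in force, and checking that the $\theta_i$-derivatives genuinely live in $\Htwo$ so that differentiation under the inner product is legitimate. The openness of $\set D$ and the tacit smoothness embedded in the definition of $\set T(\ve\theta)$ handle both points. Beyond this, the argument is essentially a one-line projector identity.
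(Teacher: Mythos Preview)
Your argument is correct and is essentially identical to the paper's proof: both write out the first-order stationarity condition for the projected problem, use self-adjointness of $P(\ve\mu)$ to move the projector to the other slot of the inner product, and then use $\set T(\hve\theta)\subseteq\Range P(\ve\mu)$ to drop it. The only cosmetic difference is that the paper phrases the conditions in terms of a generic $T\in\set T(\hve\theta)$ rather than the individual partials $\partial H_r/\partial\theta_i$, and reads the chain of equalities in the opposite direction.
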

\begin{proof}
If $H_r(\cdot, \hve \theta) \in \set M$ is a local optimizer
of the projected~\cref{eq:H2MOR_gen_proj} problem,
then  
\begin{align}
	\label{eq:opt_proj}
	\langle P(\ve\mu)[H - H_r(\cdot, \hve \theta)], T  \rangle_\Htwo &= 0
	\quad \forall T \in \set T(\hve \theta).
\end{align}
As $P(\ve\mu)$ is an orthogonal projector
we can move it into the right argument
and noting $P(\ve\mu) T= T$
since, by assumption, $\set T(\hve \theta) \subseteq \Range P(\ve\mu)$,
we have 
\begin{equation}
	\langle P(\ve\mu)[H - H_r(\cdot, \hve \theta)], T  \rangle_\Htwo 
	= \langle H - H_r(\cdot, \hve \theta), P(\ve\mu) T  \rangle_\Htwo 
	=\langle H - H_r(\cdot, \hve \theta), T  \rangle_\Htwo. 
\end{equation}
Thus $H(\cdot, \hve \theta)$ is a local optimizer of the 
original problem~\cref{eq:opt_orig} since
\begin{align}
	\label{eq:opt_orig}
	\langle H - H_r(\cdot, \hve \theta), T \rangle_\Htwo &= 0
	\quad \forall T \in \set T(\hve \theta).
\end{align}
\end{proof}
Motivated by this insight,
we design the outer loop to increase the range of the projector $P(\ve\mu)$
such that (asymptotically) we satisfy the containment requirement of \cref{lem:projection}. 
To maximize reuse existing evaluations of the transfer function,
each iteration of the outer loop adds an interpolation point to the projector; i.e., 
\begin{equation}
	\ve\mu^+ = 
	[\mu_1,\ldots,\mu_n,\mu_{n+1}]
	\quad \Longrightarrow \quad
	\Range P(\ve\mu) \subset \Range P(\ve\mu^+).
\end{equation}

\subsection{Space of ROMs\label{sec:intro:rom}}
Although the projected nonlinear least squares framework can be applied to any family of 
reduced order models $\set S$ with a finite-dimensional parameterization,
the details of both the inner and outer loop depend on this choice.
Due to their popularity,
here we restrict our attention to state-space reduced order models
posed over the field $\mathbb{F}$ that is either real $\R$ or complex $\C$:
\begin{gather}
\left\lbrace
\begin{aligned}
	\ve x_r'(t) &= \ma A_r \ve x_r(t) + \ve b_r u(t), \quad \ve x_r(0) = \ve 0 \\
	y_r(t) &= \ve c_r^* \ve x_r(t)
\end{aligned}
\right\rbrace,
\quad\!\!
\text{where}
\quad \!\!
\ma A_r \in \mathbb{F}^{r \times r}, \  \ve b_r, \ve c_r \in \mathbb{F}^r,
\end{gather}
and whose transfer function is
\begin{equation}\label{eq:ROM_tf}
	H_r(z; \ma A_r, \ve b_r, \ve c_r) \coloneqq \ve c_r^*[z \ma I - \ma A_r]^{-1}\ve b_r.
\end{equation}
This transfer function is a degree $(r-1,r)$ rational function 
as the resolvent $[z\ma I - \ma A_r]^{-1}$
is a degree $(r-1,r)$ matrix-valued rational function~\cite[Chap.~1, eq.~(5.23)]{Kat95}.
We denote the space of $(r-1,r)$ scalar-valued rational functions as
\begin{equation}
		\set R_r (\mathbb{F})
		\coloneqq
		\left\lbrace \frac{p}{q}: p \in \set P_{r-1}(\mathbb{F}), \ q \in \set P_{r}(\mathbb{F})
		\right\rbrace,
\end{equation}
where $\set P_r(\mathbb{F})$ denotes polynomials of degree $r$ with coefficients in the field $\mathbb{F}$.
There is a surjective mapping between state-space transfer functions~\cref{eq:ROM_tf}
and elements of $\set R_r(\mathbb{F})$;
hence for any rational function $H_r \in \set R_r(\mathbb{F})$
we can nonuniquely identify a state-space system with matrices $\ma A \in \mathbb{F}^{r\times r}$,
and $\ve b, \ve c\in \mathbb{F}^r$ whose transfer function is $H_r$.
Not all elements of $\set R_r(\mathbb{F})$ are members of $\Htwo$.
For $H_r\in \set R_r(\mathbb{F})$ to be in $\Htwo$
all its poles must lie in the open left half plane;
we denote this space as
\begin{equation}
	\set R_r^+(\mathbb{F}) \! \coloneqq \!
	\set R_r(\mathbb{F}) \cap \Htwo
	=
	\left\lbrace \frac{p}{q}: p \in \set P_{r-1}(\mathbb{F}), \ q \in \set P_{r}(\mathbb{F}),
		\text{ roots of $q$ in LHP }
	\right\rbrace.
\end{equation}
As many applications of model reduction in science and engineering 
involve real dynamical systems---%
e.g., diffuse optimal tomography \cite{SGKCBO15,OKSG17}, 
structural mechanics \cite{G04,SV16}, thermal dynamics \cite{BWG08}, and optimal control \cite{G04}---%
here we focus our attention to finding real reduced order models $H_r \in \set R_r^+(\R)$.


\subsection{Advantages}
There are two main advantages to our \emph{Projected $\Htwo$} (\PHtwo) approach
compared to existing methods for $\Htwo$-optimal model reduction.

Unlike other $\Htwo$-optimal model reduction techniques,
\PHtwo\ requires only evaluations of the full order model transfer function $H(\mu_j)$---%
this places minimal requirements on the full order model.
In contrast, the Iterative Rational Krylov Algorithm (IRKA)~\cite{GAB08}
requires a state-space representation of the full order model.
Other methods require transfer function derivatives;
e.g., Transfer Function IRKA (TF-IRKA)~\cite{BG12} requires first derivatives
and Newton methods~\cite{BG09a,Mei65}
require both first and second derivatives.
Quadrature-based Vector Fitting (QuadVF)~\cite{DGB15}
does not require derivatives,
but does need a particular limit of $H$.


Projected $\Htwo$ typically yields similar or better reduced order models
compared to other techniques while simultaneously user fewer evaluations of the full order model.
This is significant because for large scale models,
evaluating $H(z)$ dominates the cost;
e.g., if $H(z)$ is the transfer function of a large state-space system
\begin{gather}\label{eq:statespace}
\left\lbrace
\begin{aligned}
	\ve x'(t) &= \ma A \ve x(t) + \ve b u(t), \quad \ve x(0) = \ve 0 \\
	y(t) &= \ve c^* \ve x(t)
\end{aligned}
\right\rbrace, 
\quad
\text{where}
\quad 
\begin{aligned}
\ma A \in \C^{n \times n}, \  \ve b, \ve c \in \C^n, \\
H(z) = \ve c^*[z\ma I - \ma A]^{-1} \ve b,
\end{aligned}
\end{gather}
then the cost of evaluating $H(z)$ is dominated by 
the linear solve\linelabel{line:linsolve}.
As the numerical experiments in \cref{sec:results} demonstrate, 
our \PHtwo\ approach uses fewer transfer function evaluations
than both IRKA and TF-IRKA---often by an order of magnitude.
This is because each iteration of \PHtwo\ recycles previous evaluations of $H(\mu_j)$,
whereas each iteration of standard IRKA and TF-IRKA discards
these evaluations and requires $2r$ new evaluations.
Similarly, our algorithm tends to find better local minimizers
because each inner loop of \PHtwo\ solves an overdetermined rational approximation problem
where we are able to try multiple initializations.
In contrast, the rational interpolant generated by IRKA and TF-IRKA at each step is unique.
Compared to QuadVF,
both \PHtwo\ and QuadVF
solve an overdetermined rational approximation but differ
in how they approximate the $\Htwo$-norm.
QuadVF uses a fixed quadrature rule evaluating $H$ on the imaginary axis
whereas \PHtwo\ projects the $\Htwo$-norm as in~\cref{eq:H2MOR_gen_proj}
using evaluations of $H$ in the right half plane.
As our approximation of the $\Htwo$-norm is dynamically chosen,
it often yields better reduced order models.

\section{Properties of the \texorpdfstring{$\Htwo$}{H2} Hilbert Space\label{sec:properities}}
To begin, we briefly summarize the properties of the $\Htwo$ Hilbert space essential 
for the development of our algorithm.

\subsection{Evaluation of \texorpdfstring{$\Htwo$}{H2} Norm\label{sec:properties:norm}}
Although the $\Htwo$ norm is defined through an integral in~\cref{eq:H2ip},
when $H$ has a state-space representation~\cref{eq:statespace} we can compute this norm exactly.
In this case the impulse response $h(t)$ and transfer function $H(z)$ are
\begin{equation}\label{eq:FOMss}
	h(t) = \ve c^*\exp[\ma A t]\ve b
	\qquad \text{and} \qquad
	H(z) = \ve c^*[z\ma I - \ma A]^{-1} \ve b.
\end{equation}
Then the $\Htwo$ norm can be computed from either the controllability $\ma W_c$ or observability $\ma W_o$ Gramians.
Both Gramians are the solution to a Lyapunov equation~\cite[Sec.~4.3]{Ant05}:
\begin{align}
	\ma W_c &\coloneqq \int_0^\infty e^{\ma A t} \ve b \ve b^* e^{\ma A^*t} \D t,  & 
		\ma A \ma W_c + \ma W_c \ma A^* &= -\ve b\ve b^*; \\
	\ma W_o &\coloneqq \int_0^\infty e^{\ma A^* t} \ve c \ve c^* e^{\ma At} \D t,  & 
		\ma A^* \ma W_o + \ma W_o \ma A &= -\ve c\ve c^* .
\end{align}
Using either Gramian, the $\Htwo$-norm can be evaluated as~\cite[eq.~(5.28)]{Ant05}
\begin{equation}\label{eq:H2_gramian}
	\|H\|_\Htwo^2 \coloneqq \frac{1}{2\pi} \int_{-\infty}^\infty |H(\iu \omega)|^2 \D \omega
		=\int_0^\infty |h(t)|^2 \D t = \ve c^*\ma W_c \ve c = \ve b^*\ma W_o\ve b.
\end{equation}


\subsection{\!\!Reproducing Kernel Hilbert Space\label{sec:properties:RKHS}}
\emph{\!\!Reproducing kernel Hilbert spaces}~\cite{Aro50}
are Hilbert spaces which contain a kernel $K[\mu]$
that is also the sampling operator, 
i.e., $\langle K[\mu], F\rangle = F(\mu)$.
For the $\Htwo$ Hilbert space this kernel is $v[\mu]$
\begin{equation}\label{eq:rkhs}
	v[\mu](z) \coloneqq (z+\conj{\mu})^{-1},
	\quad \text{where} \quad \mu \in \C_+ \coloneqq \lbrace \mu \in \C: \Re \mu > 0\rbrace.
\end{equation}
The reproducing property follows from the Cauchy integral formula~\cite[Lem.~2.4]{GAB08}
\begin{equation}\label{eq:kern_prop}
	\begin{split}
	\langle v[\mu], H \rangle_\Htwo
	&\!=\! \frac{1}{2\pi} \int_{-\infty}^\infty \conj{v[\mu](\iu \omega)} H(\iu \omega) d\omega
	= \frac{1}{2\pi} \int_{-\infty}^\infty \frac{1}{-\iu \omega+\mu} H(\iu \omega) d\omega\\
	&\!=\! \frac{-1}{2\pi} \int_{-\infty}^\infty \frac{1}{\iu \omega - \mu} H(\iu \omega) d\omega
	= \frac{-\iu}{2\pi} \lim_{R\to\infty} \int_{D_R} \frac{1}{z - \mu} H(z) dz
	= H(\mu),
\end{split}
\end{equation}
where $D_R$ denotes the counterclockwise path along the boundary of the right half disk of radius $R$ 
split along the imaginary axis.

\subsection{Meier-Luenberger Optimality Conditions\label{sec:properties:ML}}
The first order optimality conditions for $\Htwo$ model reduction
by state-space systems are called the \emph{Meier-Luenberger} conditions~\cite{ML67}.
These follow from the optimality conditions~\cref{eq:opt_orig},
but are frequently presented as Hermite interpolation conditions
through use of the reproducing kernel $v[\mu]$.
Here we provide a brief derivation
under the assumption $H_r \in \set R_r^+(\C)$ has only simple poles;
for a derivation with out this constraint, see~\cite{DGA10}.
This assumption allows us to parameterize $H_r$ 
in terms of poles $\ve\lambda\in \C^r$ and residues $\ve\rho \in \C^r$
\begin{equation}\label{eq:pole_residue}
	H_r(z; \ve \lambda, \ve \rho) 
		\coloneqq \sum_{k=1}^r \frac{\rho_k}{z - \lambda_k}
		 = \sum_{k=1}^r \rho_k v[-\conj{\lambda_k}](z),
	\quad \ve\lambda, \ve \rho \in \C^r, \ \lambda_k < 0.
\end{equation}
Taking derivatives with respect to the real and imaginary parts yields:
\begin{align}
	\label{eq:ML1}
	\frac{\partial }{\partial \Re \lambda_k} \| H - H_r(\cdot; \ve\lambda, \ve \rho)\|_\Htwo^2
		&= \phantom{-}2 \Re \left\langle 
			\phantom{\iu}\rho_k v[-\conj{\lambda_k}]', 
			H - H_r(\cdot; \ve\lambda, \ve \rho)
			\right\rangle_\Htwo; \\
	\frac{\partial }{\partial \Im \lambda_k} \| H - H_r(\cdot; \ve\lambda, \ve \rho)\|_\Htwo^2
		&= -2 \Re \left\langle 
			\iu \rho_k v[-\conj{\lambda_k}]', 
			H - H_r(\cdot; \ve\lambda, \ve \rho)
		\right\rangle_\Htwo; \\
	\frac{\partial }{\partial \Re \rho_k} \| H - H_r(\cdot; \ve\lambda, \ve \rho)\|_\Htwo^2
		&= -2 \Re \left\langle 
			\phantom{i} v[-\conj{\lambda_k}],
			H - H_r(\cdot; \ve\lambda, \ve \rho) 
		 \right\rangle_\Htwo; \\
	\label{eq:ML4}
	\frac{\partial }{\partial \Im \rho_k} \| H - H_r(\cdot; \ve\lambda, \ve \rho)\|_\Htwo^2
		&= -2 \Re \left\langle 
			 	\iu v[-\conj{\lambda_k}],
				H - H_r(\cdot; \ve\lambda, \ve \rho)
			 \right\rangle_\Htwo;
\end{align}
where $v[\mu]'$ denotes the derivative of $v[\mu]$, 
$v[\mu]'(z) \coloneqq -(z+\conj{\mu})^{-2}$.
When these derivatives above are all zero, 
$H_r$ satisfies the first order necessary conditions.
Denoting these parameter values as $\hve\lambda$ and $\hve \rho$,
combining terms pairwise and invoking~\cref{eq:rkhs} yields
\begin{align}
	\label{eq:ML1eq}	
	0&= \left\langle 
			v[-\conj{\widehat{\lambda}_k} ]', 
			H - H_r(\cdot; \hve\lambda, \hve \rho)
			\right\rangle_\Htwo
	& \Leftrightarrow && 
	 H'(-\conj{\widehat{\lambda}_k}) &= H_r'(-\conj{\widehat{\lambda}_k}; \hve \lambda, \hve \rho ), \\
	\label{eq:ML2eq} 
	0&= \left\langle 
			v[-\conj{\widehat{\lambda}_k} ], 
			H - H_r(\cdot; \hve\lambda, \hve \rho)
			\right\rangle_\Htwo
	& \Leftrightarrow && 
	H(-\conj{\widehat{\lambda}_k}) &= H_r(-\conj{\widehat{\lambda}_k}; \hve \lambda, \hve \rho ),	
\end{align}
provided $|\widehat{\rho}_k| > 0$.
These conditions require a locally optimal $H_r$
to be a Hermite interpolant of $H$ 
at the reflection of the poles $\hve\lambda$ across the imaginary axis.
This result applies to real reduced order models as well, 
since $\set R_r^+(\R)$ is a subspace of $\set R_r^+(\C)$
\begin{equation}
	\set R_r^+(\R) = \lbrace F \in \set R_r^+(\C): \conj{F(z)} = F(\conj{z}) \  \forall z \in \C \rbrace.
\end{equation}
Further, although this parameterization explicitly excludes higher order poles---%
i.e., $H_r(z) = (z-\lambda)^{-2}$ cannot be expressed in this parameterization---%
in practice this is not a concern.
Rational functions with higher order poles are nowhere dense in $\set R_r^+(\C)$~\cite[p.~2739]{DGA10}
and hence cannot be resolved in finite precision arithmetic.

\section{Existing Algorithms\label{sec:existing}}
There are a variety of techniques for $\Htwo$-optimal model reduction.
Each of these requires access to different information about the full order model
described by $H$.
For example, both IRKA~\cite{GAB08} and TF-IRKA~\cite{BG12} are fixed point iterations
based on rational interpolants,
but IRKA requires access to a state-space representation of $H$
whereas TF-IRKA only requires access to $H(z)$ and $H'(z)$.
Van Dooren, Gallivan, and Absil propose a similar fixed point iteration
that allows for higher order poles in the reduced order model~\cite{DGA08,DGA10}.
There are also Newton methods that require access to $H(z)$, $H'(z)$, and $H''(z)$,
such as the approach developed by Meier~\cite{Mei65}
and a trust-region approach due to Beattie and Gugercin~\cite{BG09a}.
In addition to these optimal methods, there are also several suboptimal methods
that require only access to $H(z)$.
For example, QuadVF uses a quadrature rule to approximate the $\Htwo$-norm~\cite{DGB15}
and $\Htwo$ pseudo-optimality removes the derivative condition from the Meier-Luenberger conditions
and finds a reduced order model with fixed poles that minimizes the $\Htwo$ norm~\cite{WPL13}.
In this section we briefly summarize three of these algorithms
used as comparisons in \cref{sec:results}: IRKA, TF-IRKA, and QuadVF.

\subsection{IRKA}
The Iterative Rational Krylov Algorithm (IRKA)~\cite{GAB08} builds on earlier work constructing
rational interpolants for full order models given in state-space form~\cite{Gri97,YWS85}.
Given a state-space system~\cref{eq:statespace} with matrices $\ma A$, $\ve b$, and $\ve c$,
a Hermite rational interpolant at points $\lbrace \mu_j\rbrace_{j=1}^r$
can be constructed using rational Krylov spaces $\set W$ and $\set V$ 
\begin{align}
	\label{eq:RKV}
	\set V &= \Range(\ma V) \, =
		\Span\{[\mu_1 \ma I - \ma A]^{-1} \ve b, \ldots,  [\mu_r \ma I - \ma A]^{-1} \ve b \}, & 
		 \ma V^*\ma V &= \ma I; \\
	\label{eq:RKW}
	\set W &= \Range(\ma W) = 
		\Span\{[\mu_1 \ma I - \ma A]^{-*} \ve c, \ldots,  [\mu_r \ma I - \ma A]^{-*} \ve c \}, &
		 \ma W^*\ma W &= \ma I.
\end{align}
Then the reduced order model $H_r(z) = \ve c_r^*[z\ma I - \ma A_r]^{-1} \ve b_r$ with matrices
\begin{equation}
	\label{eq:irkaROM}
		\ma A_r = (\ma W^*\ma V)^{-1}\ma W^*\ma A \ma V, \quad
		\ve b_r = (\ma W^*\ma V)^{-1}\ma W^*\ve b, \quad
		\ve c_r = \ma V^*\ve c
\end{equation}
satisfies the Hermite interpolation conditions at each $\mu_j$~\cite[Cor.~2.2]{GAB08}
\begin{equation}
	H(\mu_j) = H_r(\mu_j), \quad H'(\mu_j) = H_r'(\mu_j), \quad j=1, \ldots, r
\end{equation}
provided $\ma W^*\ma V$ is invertible.
This interpolant satisfies the Meier-Luenberger conditions
when the poles of $H_r$---%
the eigenvalues $\lbrace \lambda_j \rbrace_{j=1}^r$ of $\ma A_r$---%
are the interpolation points flipped across the imaginary axis;
i.e., $\lbrace -\conj{\lambda_j} \rbrace_{j=1}^r= \lbrace \mu_j \rbrace_{j=1}^r$.


\begin{algorithm}[t]
\begin{minipage}{\linewidth}
\begin{algorithm2e}[H]
	\Input{FOM system $\lbrace \ma A, \ve b, \ve c\rbrace$, 
		initial interpolation points $\ve \mu^{(0)} = [\mu_1^{(0)}, \ldots, \mu_r^{(0)}] $}
	\Output{ROM system $\lbrace \ma A_r, \ve b_r, \ve c_r\rbrace$}
	\For{$\ell=0,1,2,\ldots$}{
		Construct rational Krylov spaces $\ma V$ and $\ma W$ given $\ve\mu^{(\ell)}$ using 
			\cref{eq:RKV} and \cref{eq:RKW}\;
		Build state-space reduced order model $\lbrace \ma A_r, \ve b_r, \ve c_r\rbrace$ 
			with $\ma V$ and $\ma W$ via~\cref{eq:irkaROM}\;
		Choose $\mu_j^{(\ell+1)} = -|\Re\lambda_j| - \iu \Im \lambda_j$, 
			where $\lbrace \lambda_j \rbrace_{j=1}^r$ are eigenvalues of $\ma A_r$\;
	}
\end{algorithm2e}
\vspace{-1.5em}
\end{minipage}
\caption{Iterative Rational Krylov Algorithm (IRKA) (simplified)}
\label{alg:irka}
\end{algorithm}

IRKA is a fixed point iteration that uses this Hermite interpolant
to find a state-space reduced order model satisfying the Meier-Luenberger conditions.
As summarized in \cref{alg:irka},
given a set of interpolation points $\lbrace \mu_j \rbrace_{j=1}^r$
IRKA constructs a Hermite rational interpolant using~\cref{eq:irkaROM}
and then the poles of this rational interpolant $\lbrace \lambda_j \rbrace_{j=1}^r$
provide the new interpolation points for the next step, $\mu_j = -\conj{\lambda}_j$.
When this iteration converges, 
the reduced order model $\widehat H_r$ satisfies the Meier-Luenberger conditions;
moreover $\widehat H_r$ is a local minimizer as local maximizers are repellent~\cite[subsec.~7.4.2]{BG17}.
Although IRKA often-times converges in practice,
there are only limited cases where convergence is guaranteed~\cite{FBG12} 
and examples exist where this fixed point iteration does not converge~\cite[subsec.~7.4.2]{BG17}.
For large scale model reduction,
the cost of IRKA is dominated by the linear solves in \cref{eq:RKV} and \cref{eq:RKW}.
There are several modifications to IRKA
that mitigate the cost of these linear solves by, 
for example, 
using inexact linear solves~\cite{BGW12,BG06}, recycling information between iterations~\cite{ASGC12},
and constructing local approximations of the full order model~\cite{CPL16}.

\subsection{TF-IRKA}
A critical limitation of IRKA is the need for a state-space representation of $H$
for constructing the rational Krylov subspaces $\set V$ and $\set W$
used to build the rational interpolant reduced order model.
Transfer function IRKA (TF-IRKA)~\cite{BG12}
removes this constraint by constructing the reduced order model
using a Loewner based approach following the work of Anderson and Antoulas~\cite{AA90}. 
Given a set of interpolation points $\lbrace \mu_j\rbrace_{j=1}^r$,
TF-IRKA builds a Hermite interpolant $H_r(z) = \ve c_r^*[z\ma E_r - \ma A_r]^{-1} \ve b_r$ 
using evaluations of $H(\mu_j)$ and its derivative $H'(\mu_j)$ where
\begin{equation}
	[\ma A_r]_{j,k} = \begin{cases}
			-\frac{\mu_j H(\mu_j) - \mu_k H(\mu_k)}{\mu_j - \mu_k}, & j\ne k; \\
			-[zH(z)]'|_{z=\mu_j}, & j=k;
		\end{cases}
	\quad
	[\ma E_r]_{j,k} = \begin{cases}
			-\frac{H(\mu_j) - H(\mu_k)}{\mu_j - \mu_k}, & j\ne k; \\
			-H'(\mu_j), & j=k;
		\end{cases}
\end{equation}
and $[\ve b_r]_j = [\ve c_r]_j = H(\mu_j)$;
here $[\ma A]_{j,k}$ denotes the $j,k$th entry of $\ma A$.
Then, as in IRKA,  
the interpolation points are updated in a fixed point iteration based on the poles of $H_r$,
here the generalized eigenvalues $\lambda$ of $(\ma A_r$, $\ma E_r)$;
e.g., $\ma A_r \ve x = \lambda \ma E_r \ve x$.
When $H$ has a state-space representation,
the iterations of IRKA and TF-IRKA are identical in exact arithmetic.

\subsection{QuadVF}
An alternative approach taken by Quadrature-based Vector Fitting (QuadVF)~\cite{DGB15}
is to approximate the $\Htwo$ norm using a quadrature rule
and then solve the resulting weighted least squares rational approximation problem.
QuadVF uses Boyd/Clenshaw-Curtis quadrature rule~\cite{B87}
using $n$ evaluations of $H$ plus its limit at $\pm \infty$
and includes a scaling parameter $L>0$
\begin{align}\label{eq:H2_bcc}
	\| H \|_\Htwo^2 = \frac{1}{2\pi} \int_{-\infty}^\infty |H( \iu \omega)|^2 \D \omega
	\approx 
		\frac{ | M_+[H] |^2 + | M_-[H] |^2 }{4L(n + 1)} +
		\sum_{j=1}^n w_j |H(z_j)|^2 \\
	 w_j \!=\! \frac{L}{2(n\!+\!1) \sin^2 (j\pi/(n\!+\!1))},\
	 z_j \!=\! \iu L\cot\left(\frac{j\pi}{n\!+\!1}\right), \
 	M_\pm[H] \!=\!\! \lim_{\omega\to\pm \infty}\!  \iu \omega H(i\omega). 
\end{align}
This yields a diagonally weighted least squares rational approximation problem 
\begin{equation}\label{eq:quadVF}
	\setlength\arraycolsep{-2pt}
	\min_{H_r \in \set R_r^+(\R)} \left\|
		\begin{bmatrix}
		\sqrt{w_1} & & & & \\
		& \ddots & & \\
		& & \sqrt{w_n} & & \\
		& & & \!\!\sqrt{w_+} & \\
		& & & & \!\! \sqrt{w_-} 
		\end{bmatrix}
		\! 
		\left(
		\begin{bmatrix}
			H(z_1) \\ \vdots \\ H(z_n) \\ M_+[H] \\ M_-[H]
		\end{bmatrix}
		\!-\!
		\begin{bmatrix}
			H_r(z_1) \\ \vdots \\ H_r(z_n) \\ M_+[H_r] \\ M_-[H_r]
		\end{bmatrix}
		\right)
		\right\|_2 \!; \ 
		 w_{\pm} \!=\! \frac{1}{4L(n\!+\!1)}. \!\!
\end{equation}
A modified Vector Fitting~\cite{GS99} 
constructs a rational approximation in barycentric form
minimizing~\cref{eq:quadVF}
by iteratively updating the nodes of the barycentric representation.
Although QuadVF will necessarily yield 
reduced order models that do not satisfy the Meier-Luenberger conditions
due to the discretization of the $\Htwo$-norm,
this technique frequently yields reduced order models
with small residual norm $\| H - H_r\|_\Htwo$.

\section{Projected Nonlinear Least Squares\label{sec:projection}}
Our approach for $\Htwo$-optimal model reduction 
extends the \emph{projected nonlinear least squares} framework
introduced by Hokanson for finite-dimensional problems~\cite{Hok17}.
In its original presentation, given a nonlinear least squares problem
\begin{equation}\label{eq:pnls_finite}
	\min_{\ve\theta \in \R^q} \| \ve f(\ve\theta) - \tve y\|_2^2 \qquad \ve f: \R^q\to \C^n
\end{equation}
this framework solves a series of projected problems, cf.~\cite[eq.~(2)]{Hok17}:
\begin{equation}
	\ve \theta^{\ell} \coloneqq 
		\argmin_{\ve\theta \in \R^q} \|\ma P_{\set W_\ell} [ \ve f(\ve\theta) - \ve y]\|_2^2
		= \argmin_{\ve\theta \in \R^q} \|\ma W_{\ell}^* [ \ve f(\ve\theta) - \ve y]\|_2^2,
\end{equation} 
where $\ma P_{\set W_\ell}$ is an orthogonal projector onto $\set W_\ell \subset \C^n$
with orthonormal basis $\ma W_\ell\in \C^{n\times m_\ell}$
so that $\ma P_{\set W_\ell} = \ma W_\ell \ma W_\ell^*$.
Then by choosing subspaces $\set W_\ell$ such that the largest subspace angle between
$\set W_\ell$ and the range of the Jacobian of $\ve f(\ve\theta^{\ell})$ is small,
we obtain an accurate solution the original problem~\cref{eq:pnls_finite}.
The advantage of this approach is that at each step
we solve a nonlinear least squares problem of small dimension $m_\ell \ll n$.
A caveat though is we must be able to evaluate the 
inner product $\ma W_\ell^*\ve f(\ve\theta)$ inexpensively.

Applying this framework to $\Htwo$-optimal model reduction
converts an infinite-dimensional optimization problem into a sequence of
finite dimensional nonlinear least squares problems.
Here the reproducing kernel structure of $\Htwo$ 
provides the requisite inexpensive inner product.
In the following subsections
we build the projector for the $\Htwo$ problem (\cref{sec:projection:projector})
and apply \cref{lem:projection} to identify the desired range of the projector (\cref{sec:projection:necessary}).
Since our choice of projector precludes exact containment,
we quantify the error introduced in terms of the subspace angle (\cref{sec:projection:performance})
and show this error can be made arbitrarily small (\cref{sec:projection:fd}).

\subsection{Projection\label{sec:projection:projector}}
Here we construct the projector $P(\ve\mu)$
to be an orthogonal projector onto the subspace $\set V(\ve \mu)$
that spans the kernel vectors $v[\mu_k]$~\cref{eq:rkhs}:
\begin{equation}\label{eq:setV}
	\set V(\ve\mu) \coloneqq \Span \lbrace v[\mu_k] \rbrace_{k=1}^n \subset \Htwo,
	\quad
	\ve \mu \in \C_+^n \coloneqq \lbrace \ve z \in \C^n | \Re z_k > 0\rbrace.
\end{equation}
To build this projector, we first define the linear operator
$V(\ve\mu):\C^n\to \set V(\ve\mu) \subset \Htwo$ and its adjoint $V(\ve\mu)^*:\Htwo \to \C^n$, 
\begin{equation}\label{eq:Vdef}
	V(\ve\mu) \ve c = \sum_{k=1}^n c_k v[\mu_k],
	\quad 
	V(\ve\mu)^*H = 
		\begin{bmatrix}
			\langle v[\mu_1], H\rangle_\Htwo \\
			\vdots \\
			\langle v[\mu_n], H\rangle_\Htwo
		\end{bmatrix}
		= \begin{bmatrix}
			H(\mu_1) \\ \vdots \\ H(\mu_n)
		\end{bmatrix}
		\eqqcolon H(\ve\mu)
\end{equation}
Above we have invoked the kernel identity~\cref{eq:kern_prop} to evaluate the adjoint.
These two operators $V(\ve\mu)$ and $V(\ve\mu)^*$ satisfy the adjoint identity: 
\begin{equation}
	\langle V(\ve\mu)\ve c, H\rangle_\Htwo =
	\sum_{k=1}^n \conj{c_k}\langle v[\mu_k], H\rangle_\Htwo
	=  \ve c^* H(\ve\mu) = 
	\langle \ve c, V(\ve\mu)^*H\rangle_{\C^n}.
\end{equation}
We now construct the orthogonal projector
$P(\ve\mu): \Htwo \to \set V(\ve\mu)$ as
\begin{equation}
	P(\ve\mu) \coloneqq V(\ve\mu) [V(\ve\mu)^* V(\ve \mu)]^{-1} V(\ve \mu)^* =
		V(\ve\mu) \ma M(\ve\mu)^{-1} V(\ve\mu)^*,
\end{equation} 
where $\ma M(\ve\mu)$ is the positive definite Cauchy matrix
\begin{equation}\label{eq:Mdef}	
	[\ma M(\ve\mu)]_{j,k} \coloneqq \langle v[\mu_j], v[\mu_k] \rangle_\Htwo = (\mu_j + \conj{\mu_k})^{-1}.
\end{equation}
With these definitions, the projected norm
can be evaluated as a weighted Euclidean norm 
of the difference between samples of $H$ and $H_r$ evaluated at $\ve\mu$:
\begin{equation}
	\begin{split}
	\| P(\ve\mu)[ H - H_r] \|_\Htwo^2 & = 
		\langle V(\ve\mu) \ma M(\ve\mu)^{-1} V(\ve\mu)^*[ H - H_r], [H - H_r] \rangle_\Htwo \\
		&= \langle \ma M(\ve\mu)^{-\frac12}V(\ve\mu)^*[ H - H_r], 
			\ma M(\ve\mu)^{-\frac12} V(\ve\mu)^*[H  -  H_r] \rangle_\Htwo \\
		\label{eq:PtoNLS}
		&= \langle 
			\ma M(\ve\mu)^{-\frac12} [H(\ve\mu) - H_r(\ve\mu)],
			\ma M(\ve\mu)^{-\frac12} [H(\ve\mu) - H_r(\ve\mu)]
			\rangle_{\C^n} \\
		&= \| \ma M(\ve\mu)^{-\frac12} [H(\ve\mu) -  H_r(\ve\mu)]\|_2^2, 
\end{split}
\end{equation}
where $\ma M(\ve\mu)^{-\frac12}$ is the Hermitian square root of $\ma M(\ve\mu)^{-1}$.
\linelabel{line:square_root}
Although analytically convenient,
computationally we use Cholesky decomposition described in \cref{sec:ratfit:weight}
in place of $\ma M(\ve\mu)^{-\frac12}$.

\subsection{First Order Necessary Conditions\label{sec:projection:necessary}}
We now revisit \cref{lem:projection}
to derive conditions under which a local optimizer of the projected problem
is also a local optimizer of the original problem.
For simplicity we consider a parameterization of $\set R_r^+(\C)$
in terms of poles and residues (cf.~\cref{eq:pole_residue})
\begin{equation}\label{eq:H2MOR_param}
	\hve \lambda, \hve \rho \coloneqq 
		\argmin_{\substack{ \ve \lambda,\ve \rho \in \C^r \\ \Re \lambda_k <0 } } 
		\|H - H_r(\cdot; \ve \lambda, \ve \rho )\|_\Htwo^2, 
		\quad
	H_r(z; \ve\lambda, \ve \rho) \coloneqq \sum_{k=1}^r \rho_k v[-\conj{\lambda_k}](z).
\end{equation}
As $\ve\lambda$ and $\ve \rho$ are contained in an open set,
we can apply \cref{lem:projection}.
Thus if $\hve \lambda, \hve \rho$ are local optimizers of the projected version of~\cref{eq:H2MOR_param},
and $P(\ve\mu) \supseteq \set T(\hve \lambda, \hve\rho)$,
where $\set T(\hve\lambda, \hve \rho)$ contains the derivatives with respect to this parameterization: 
\begin{equation}
	\set T(\ve\lambda, \ve \rho) \coloneqq
		\Span \left\lbrace
			\rho_k v[-\conj{\lambda_k}]', i \rho_k v[-\conj{\lambda_k}]',
			v[-\conj{\lambda_k}], i v[-\conj{\lambda_k}]
		\right\rbrace_{k=1}^r
		\subset \Htwo,
\end{equation}
then $\hve\lambda, \hve \rho$ are also local optimizers of the original problem~\cref{eq:H2MOR_param}.
Note these the vectors in $\set T(\ve\lambda, \ve \rho)$ are precisely
those that appear in the derivation the Meier-Luenberger conditions in \crefrange{eq:ML1}{eq:ML4}.
As the span contains all linear combinations, 
we denote this subspace using $\ve\lambda$ alone
\begin{equation}\label{eq:setMdef}
	\set T(\ve\lambda) \coloneqq \Span \lbrace v[-\conj{\lambda_j}], v[-\conj{\lambda_j}]' \rbrace_{j=1}^{r}
	\supseteq \set T(\ve \lambda, \ve \rho)
\end{equation}
with equality holding when $\rho_k\ne 0$.
Finally, as $\set R_r^+(\R) \subset \set R_r^+(\C)$, 
satisfying $P(\ve\mu) \supseteq \set T(\ve\lambda)$ is sufficient 
to apply \cref{lem:projection} for $\set R_r^+(\R)$.

\subsection{Approximating Necessary Conditions\label{sec:projection:performance}}
Unfortunately the construction of projector $P(\ve\mu)$
precludes exactly satisfying the containment $\Range P(\ve\mu) \supseteq \set T(\ve\lambda)$;
$\set T(\ve \lambda)$ contains not only $v[-\conj{\lambda}_k]$,
but $v[-\conj{\lambda}_k]'$ which is not in the range of $P(\ve\mu)$.
However if the range of $P(\ve\mu)$ approximates $\set T(\ve\lambda)$ 
as measured by the subspace angle
we are able to approximately satisfy the necessary conditions for the original problem.

\subsubsection{Defining Subspace Angles}
Subspace angles on $\Htwo$ are analogously defined to the finite dimensional case.
Given two finite-dimensional subspaces $\set X\subset \Htwo$ and $\set Y\subset \Htwo$ of dimension $m$ and $n$,
the $k$th subspace angle $\phi_k$ between these subspaces $\set X$ and $\set Y$
is (cf.~\cite[eq.~(2)]{BG73}):
\begin{equation}
	\cos \phi_k(\set X, \set Y) \coloneqq 
		\!\! \max_{\substack{X \in \set X \\ \|X \|_\Htwo = 1 \\ \langle X_j, X\rangle_\Htwo = 0 \\ j < k}} \
		\max_{\substack{Y \in \set Y \\ \|Y \|_\Htwo = 1 \\ \langle Y_j, Y\rangle_\Htwo = 0 \\ j < k}}
		\!\! \langle X, Y \rangle_\Htwo,
		 \quad 
	\phi_1 \le \phi_2\le \cdots \le \phi_{\min (m,n)},
\end{equation}
where $X_k\in \set X$ and $Y_k\in \set Y$ are the arguments that yield $\phi_k$.
Given unitary basis operators $B_\set X:\C^m\to \set X$ and $B_\set Y:\C^n\to \set Y$,
we can compute the subspace angles via the singular values of a finite dimensional matrix
\begin{equation}\label{eq:angle:svd}
	\cos \phi_k(\set X, \set Y) = \sigma_k(B_\set X^* B_\set Y);
	\quad B_\set X^* B_\set Y\in \C^{m\times n}
\end{equation}
where $\sigma_k$ denotes the $k$th singular value in descending order; cf.~\cite[Thm.~1]{BG73}.

\subsubsection{Computing Subspace Angles\label{sec:projection:performance:compute}}
To compute the subspace angle between $\set T(\ve\lambda)$ and $\set V(\ve\mu)$
(the range of $P(\ve\mu)$)
we introduce the orthogonal projector $Q(\ve\lambda)$ onto $\set T(\ve\lambda)$.
Defining $V'(\ve\mu):\C^n \to \Htwo$
analogously to $V(\ve\mu)$ with  $v[\mu]'$ in place of $v[\mu]$,
\begin{equation}\label{eq:Vpdef}
	V'(\ve\mu) \ve c \coloneqq \sum_{k=1}^n c_k v[\mu_k]',
	\quad 
	V'(\ve\mu)^*H \coloneqq 
		\begin{bmatrix}
			\langle v[\mu_1]', H\rangle_\Htwo \\
			\vdots \\
			\langle v[\mu_n]', H\rangle_\Htwo
		\end{bmatrix}
		=
		\begin{bmatrix}
			-H'(\mu_1) \\ 
			\vdots \\ 
			-H'(\mu_n)
		\end{bmatrix},
\end{equation}
where the last equality follows from integration by parts.
Then we define the projector $Q(\ve\lambda):\Htwo \to \set T(\ve\lambda)$ as
\begin{equation}
	\setlength{\arraycolsep}{3pt}
	Q(\ve\lambda) \coloneqq
		\begin{bmatrix}
			V(-\conj{\ve \lambda}) & V'(-\conj{\ve\lambda})
		\end{bmatrix}
		\begin{bmatrix}
			V(-\conj{\ve\lambda})^* V(-\conj{\ve\lambda}) & 
			V(-\conj{\ve\lambda})^* V'(-\conj{\ve\lambda}) \\ 
			V'(-\conj{\ve\lambda})^* V(-\conj{\ve\lambda}) &
			V'(-\conj{\ve\lambda})^* V'(-\conj{\ve\lambda})  
		\end{bmatrix}^{\!-1}\!\!
		\begin{bmatrix}
			V(-\conj{\ve\lambda})^* \\ V'(-\conj{\ve\lambda})^*	
		\end{bmatrix}.
\end{equation}
We denote this interior matrix as $\hma M(\ve\lambda)$:
\begin{gather}\label{eq:mhat_def}
	\hma M(\ve\lambda) \coloneqq
		\begin{bmatrix}
			V(-\conj{\ve\lambda})^* V(-\conj{\ve\lambda}) & 
			V(-\conj{\ve\lambda})^* V'(-\conj{\ve\lambda}) \\ 
			V'(-\conj{\ve\lambda})^* V(-\conj{\ve\lambda}) &
			V'(-\conj{\ve\lambda})^* V'(-\conj{\ve\lambda})  
		\end{bmatrix}
		\in \C^{(2r)\times (2r)},
\end{gather}
whose blocks are 
\begin{align}
	\label{eq:mhat1}
	[V\phantom{'}(-\conj{\ve\lambda})^*V\phantom{'}(-\conj{\ve\lambda})]_{j,k} &= 
		\langle v[-\conj{\lambda_j}]\phantom{'},  v[-\conj{\lambda_k}]\phantom{'} \rangle_\Htwo 
		= -(\conj{\lambda_j} + \lambda_k)^{-1}; \\
	\label{eq:mhat2}
	[V\phantom{'}(-\conj{\ve\lambda})^*V'(-\conj{\ve\lambda})]_{j,k} &=
		\langle v[-\conj{\lambda_j}]\phantom{'}, v[-\conj{\lambda_k}]' \rangle_\Htwo 
		= -(\conj{\lambda_j} + \lambda_k)^{-2}; \\
	\label{eq:mhat3}
	[V'(-\conj{\ve\lambda})^*V'(-\conj{\ve\lambda})]_{j,k} &=
		\langle v[-\conj{\lambda_j}]', v[-\conj{\lambda_k}]' \rangle_\Htwo 
		= -2(\conj{\lambda_j} + \lambda_k)^{-3}.
\end{align}
Then to compute the subspace angle, we 
define the unitary basis operators for $\set V(\ve\mu) =\Range P(\ve\mu)$
and $\set T(\ve\lambda) = \Range Q(\ve\lambda)$ 
\begin{align}
	B_{\set V(\ve\mu)}&: \C^n \to \set V(\ve\mu) &
	B_{\set V(\ve\mu)} &\coloneqq V(\ve\mu)\ma M(\ve\mu)^{-\frac12}, \\
	B_{\set T(\ve\lambda)}& : \C^{2r} \to \set M(\ve\lambda) &
	B_{\set T(\ve \lambda)} &\coloneqq 
		\begin{bmatrix}
			V(-\conj{\ve\lambda}) & V'(-\conj{\ve\lambda})
		\end{bmatrix}
		\hma M(\ve\lambda)^{-\frac12}.
\end{align}
Then the largest subspace angle between $\set V(\ve\mu)$ and $\set T(\ve\lambda)$ is 
\begin{equation}\label{eq:cos_max}
	\cos \phi_{\max}(\set T(\ve\lambda), \set V(\ve\mu))
		= \sigma_{\min}(B_{\set V(\ve\mu)}^*B_{\set T(\ve\lambda)}) 
\end{equation}
where $\sigma_{\min}$ denotes the smallest singular value
and 
\begin{equation}\label{eq:compute_angle}
	\setlength{\arraycolsep}{3pt}	
	B_{\set V(\ve\mu)}^* B_{\set T(\ve\lambda)} \!=
		\ma M(\ve\mu)^{-\frac12}  
		\begin{bmatrix}
			V(\ve\mu)^* V(-\conj{\ve \lambda})  &
			V(\ve\mu)^* V'(-\conj{\ve \lambda})  
		\end{bmatrix}
		\hma M(\ve\lambda)^{-\frac12} 
		\! \in \C^{n \times (2r)}.\!\!
\end{equation}

\subsubsection{Necessary Conditions}
We can use the projector $Q(\ve\lambda)$ to provide an alternative 
interpretation of the optimality conditions~\cref{eq:opt_orig} and \cref{eq:opt_proj}.
We first rewrite~\cref{eq:opt_orig}
in terms of the projector $Q(\ve\lambda)$:
\begin{align}
	\langle H - H_r, T\rangle_\Htwo = 0 \  \forall T\in \set T(\ve\lambda) \ 
	&\Leftrightarrow 
	&\langle H - H_r, Q(\ve\lambda) F\rangle_\Htwo &= 0  &\forall F\in \Htwo\phantom{,} \\
	&\Leftrightarrow 
	&\langle Q(\ve\lambda)[H - H_r], F\rangle_\Htwo &= 0  &\forall F \in \Htwo,
\end{align}
where the second equivalence follows as $Q(\ve\lambda)$ is an orthogonal projector.
The choice of $F$ that maximizes this second equivalence is
$F= Q(\ve\lambda)[H - H_r]$ upto a scalar multiple.
Hence, 
\begin{equation}
	\langle H - H_r, T\rangle_\Htwo = 0 \quad \forall T\in \set T(\ve\lambda)
	\quad \Leftrightarrow \quad
	\|Q(\ve\lambda)[ H - H_r]\|_\Htwo = 0.
\end{equation}
Applying the same argument to the projected problem, 
the local optimality conditions in~\cref{eq:opt_orig} and \cref{eq:opt_proj} are equivalently 
\begin{align}
	\label{eq:opt_orig_Q}
	\| Q(\hve\lambda)[H - H_r(\cdot; \hve \lambda, \hve \rho)]\|_\Htwo &= 0
	& \text{(original),} \\ 
	\label{eq:opt_proj_Q}
	\| Q(\hve \lambda) P(\ve\mu)[H - H_r(\cdot; \hve\lambda, \hve \rho)]\|_\Htwo &= 0
	& \text{(projected).} 
\end{align}
The following theorem generalizes \cref{lem:projection}
when $\Range P(\ve\mu) \nsupseteq \set T(\ve\lambda)$.

\begin{theorem}\label{thm:opt}
	Suppose $H_r(\cdot; \hve \lambda, \hve\rho)$ is a local minimizer 
	of the projected problem with $\|Q(\hve \lambda)P(\ve\mu)[H - H_r(\cdot; \hve\lambda, \hve \rho)]\|_\Htwo = 0$.
	Then $H_r(\cdot; \hve \lambda, \hve \rho)$ satisfies the first order optimality conditions 
	for the original problem~\cref{eq:H2MOR_param} 
	with error
	\begin{equation}
		\|Q(\hve\lambda)[H - H_r(\cdot; \hve \lambda, \hve \rho) ]\|_\Htwo
		\le \|H - H_r(\cdot; \hve \lambda, \hve \rho) \|_\Htwo 
		\sin \phi_{\max}(\set V(\ve\mu), \set T(\hve \lambda)).
	\end{equation}
\end{theorem}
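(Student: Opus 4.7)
The plan is a short two-step argument. First I use the hypothesis $\|Q(\hve\lambda) P(\ve\mu) R\|_\Htwo = 0$, where $R \coloneqq H - H_r(\cdot;\hve\lambda,\hve\rho)$, to rewrite $Q(\hve\lambda) R$ as $Q(\hve\lambda)(I-P(\ve\mu)) R$, and then bound the operator norm of $Q(\hve\lambda)(I-P(\ve\mu))$ by $\sin\phi_{\max}(\set V(\ve\mu), \set T(\hve\lambda))$. I abbreviate $P = P(\ve\mu)$ and $Q = Q(\hve\lambda)$ throughout.

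For the first step, inserting the identity $I = P + (I-P)$ and using linearity of $Q$ gives
\[ Q R = Q P R + Q(I-P) R. \]
By hypothesis, $\|QPR\|_\Htwo = 0$ forces $QPR = 0$, so $QR = Q(I-P)R$. The submultiplicative inequality on operator norms then yields
\[ \|Q R\|_\Htwo \;\le\; \|Q(I-P)\|_{\mathrm{op}}\,\|R\|_\Htwo. \]

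The main step is identifying $\|Q(I-P)\|_{\mathrm{op}}$ with $\sin\phi_{\max}(\set V(\ve\mu), \set T(\hve\lambda))$. Because the operator norm is invariant under adjoint,
\[ \|Q(I-P)\|_{\mathrm{op}} = \|(I-P)Q\|_{\mathrm{op}} = \sup_{\|x\|_\Htwo = 1} \|(I-P)Qx\|_\Htwo. \]
Since $Q$ is the orthogonal projector onto $\set T(\hve\lambda)$, each $Qx$ lies in $\set T(\hve\lambda)$ with $\|Qx\|_\Htwo \le \|x\|_\Htwo$, with equality whenever $x \in \set T(\hve\lambda)$, so the supremum reduces to
\[ \|Q(I-P)\|_{\mathrm{op}} = \sup_{y\in \set T(\hve\lambda),\ \|y\|_\Htwo = 1} \|(I-P)y\|_\Htwo. \]
Pythagoras gives $\|(I-P)y\|_\Htwo^2 = 1 - \|Py\|_\Htwo^2$, so the supremum is attained at the unit $y \in \set T(\hve\lambda)$ that minimizes $\|Py\|_\Htwo$. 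Since $\dim \set T(\hve\lambda) \le 2r \le n = \dim \set V(\ve\mu)$, the variational characterization underlying~\cref{eq:angle:svd} identifies this minimum value as $\cos\phi_{\max}(\set V(\ve\mu), \set T(\hve\lambda))$, whence $\|Q(I-P)\|_{\mathrm{op}} = \sin\phi_{\max}$. Combining with the bound above proves the theorem.

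The main obstacle is the operator-norm identity $\|Q(I-P)\|_{\mathrm{op}} = \sin\phi_{\max}$. The key tricks are taking the adjoint so that the supremum is pinned to the lower-dimensional subspace $\set T(\hve\lambda)$, and then relating the minimum of $\|Py\|_\Htwo$ over unit $y \in \set T(\hve\lambda)$ to the smallest singular value of the cross-Gram operator $B_{\set V(\ve\mu)}^* B_{\set T(\hve\lambda)}$ via~\cref{eq:angle:svd}. Every other manipulation is an elementary consequence of the fact that $P$ and $Q$ are orthogonal projectors on $\Htwo$.
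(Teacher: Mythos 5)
Your proof is correct and follows essentially the same route as the paper: decompose via $I = P(\ve\mu) + (I-P(\ve\mu))$, use the hypothesis to annihilate the $Q(\hve\lambda)P(\ve\mu)$ term, and bound the remainder by the induced norm $\|Q(\hve\lambda)(I-P(\ve\mu))\|_\Htwo$. The only difference is that you prove the identity $\|Q(\hve\lambda)(I-P(\ve\mu))\|_\Htwo = \sin\phi_{\max}(\set V(\ve\mu),\set T(\hve\lambda))$ from first principles (correctly, including the needed dimension condition $2r\le n$), where the paper simply cites~\cite[eq.~(13)]{BG73}.
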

\begin{proof}
Inserting the identity $I = I - P(\ve\mu) + P(\ve\mu)$ into the left hand side we have
\begin{multline}
	\| Q(\hve\lambda) [H  - H_r(\cdot; \hve\lambda, \hve \rho)] \|_\Htwo
		\le \\
		\| Q(\hve\lambda) (I - P(\ve\mu)) [  H - H_r(\cdot; \hve\lambda, \hve \rho) ]\|_\Htwo
		+ \| Q(\hve\lambda) P(\ve\mu)[H - H_r(\cdot; \hve \lambda, \hve \rho)]\|_\Htwo.
\end{multline}
The second term is zero since $H_r(\cdot; \hve\lambda, \hve \rho)$ satisfies
the local optimality conditions of the projected problem in \cref{eq:opt_proj_Q}.
Bounding this quantity using the induced $\Htwo$-norm yields
\begin{align}\label{eq:bound_ML_norm}
	\| Q(\hve \lambda) [H  - H_r(\cdot; \hve\lambda, \hve \rho)] \|_\Htwo
	\le 
		\| Q(\hve \lambda) (I - P(\ve\mu))\|_\Htwo 
		\| H - H_r(\cdot;\hve\lambda, \hve \rho) \|_\Htwo.
\end{align}
By definition, the first term on the right is the largest subspace angle between 
$\set V(\ve\mu)$ and $\set T(\hve\lambda)$~\cite[eq.~(13)]{BG73}
\begin{equation}\label{eq:angle_sine}
	\| Q(\hve\lambda)(I-P(\ve\mu))\|_\Htwo
	=
	\sin \phi_{\max}(\set V(\ve\mu), \set T(\hve\lambda)).
\end{equation}
\end{proof}

\subsection{Finite-Difference Subspace\label{sec:projection:fd}}
Although we cannot
have exact containment of $\set T(\ve\lambda)$ inside $\set V(\ve\mu)$,
we can get arbitrarily close.
To provide an intuition of how this can happen, 
note that as a derivative, 
$v[\mu]'$ can be approximated by a finite-difference for some small complex $\delta$:
\begin{equation}
	v[\mu]' \approx \frac{ v[\mu+\delta] - v[\mu-\delta]}{2|\delta|} \in \set V([\mu-\delta, \mu+\delta]).
\end{equation}
In the same spirit,
the following theorem shows that the subspace $\set V(\ve\mu)$ with $\mu_j$ clustering near $-\conj\lambda$
can approximate $\set T(\ve\lambda)$ to arbitrary accuracy.
Combined with \cref{thm:opt},
this theorem shows that local optimizers of the projected $\Htwo$ problem
can satisfy the optimality conditions of the original $\Htwo$ problem
to arbitrary accuracy.

\begin{theorem}\label{thm:angle}
	Let $\set V(\ve\mu)$ be a $n$-dimensional subspace of $\Htwo$ with $\ve\mu \in \C_+^n$
	as defined in~\cref{eq:Vdef} where $\ve\mu$ are distinct
	and let $\set T(\ve\lambda)$ be a $2r$-dimensional subspace of $\Htwo$ with $\ve\lambda \in \C_-^r$
	as defined in~\cref{eq:setMdef} where $\ve\lambda$ are distinct.
	If for each $\lambda_k$, there exist entires of $\ve \mu$
	denoted $\mu_{k,1}$, $\mu_{k,2}$, and $\mu_{k,3}$ where 
	$|\mu_{k,t} + \conj{\lambda_k}| \le \epsilon$ for $t=1,2,3$
	and no entry $\mu_{k,t}$ is repeated,
	then there exists a constant $C$ independent of $\epsilon$ such that
	\begin{equation}
		\sin \phi_{\max}(\set T(\ve\lambda), \set V(\ve\mu)) \le C \epsilon.
	\end{equation} 
\end{theorem}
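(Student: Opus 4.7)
The plan is to exploit the analyticity of $\mu \mapsto v[\mu]$ (which, as a function into $\Htwo$, is holomorphic in $\conj\mu$) to build, inside $\set V(\ve\mu)$, explicit near-approximations of each generator $v[-\conj{\lambda_k}]$ and $v[-\conj{\lambda_k}]'$ of $\set T(\ve\lambda)$, and then pass from these pointwise errors to a bound on $\sin\phi_{\max}$ via the variational characterization of the largest subspace angle. The availability of three clustered points $\mu_{k,1},\mu_{k,2},\mu_{k,3}$ is precisely what permits such approximations by a second-order Lagrange interpolation scheme.

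First I would fix $k \in \{1,\ldots,r\}$, set $\bar\eta_{k,t} \coloneqq \conj{\mu_{k,t} + \conj{\lambda_k}}$ so that $|\bar\eta_{k,t}| \le \epsilon$, and expand in the geometric series
\begin{equation*}
v[\mu_{k,t}](z) = (z-\lambda_k+\bar\eta_{k,t})^{-1}
= \sum_{j=0}^{\infty} (-\bar\eta_{k,t})^j (z-\lambda_k)^{-(j+1)},
\end{equation*}
identifying the $j=0$ and $j=1$ terms with $v[-\conj{\lambda_k}]$ and $v[-\conj{\lambda_k}]'$, respectively. Next, I would take $a_{k,1},a_{k,2},a_{k,3}$ to be the coefficients of the unique quadratic Lagrange interpolant through the (distinct) nodes $\bar\eta_{k,t}$ evaluated at the origin, and $b_{k,1},b_{k,2},b_{k,3}$ to be the coefficients of its derivative at the origin. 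Setting $\tilde v_k \coloneqq \sum_t a_{k,t}\, v[\mu_{k,t}]$ and $\tilde w_k \coloneqq \sum_t b_{k,t}\, v[\mu_{k,t}]$, both of which lie in $\set V(\ve\mu)$, the polynomial exactness of Lagrange interpolation kills the $j=0,1,2$ terms of the expansion above in an appropriate way, yielding $\|\tilde v_k - v[-\conj{\lambda_k}]\|_\Htwo = O(\epsilon^3)$ and $\|\tilde w_k - v[-\conj{\lambda_k}]'\|_\Htwo = O(\epsilon^2)$. The $\Htwo$-norms of the Taylor tails $(z-\lambda_k)^{-(j+1)}$ are finite and bounded by constants depending only on $|\Re\lambda_k|$ because $\Re\lambda_k<0$.

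The second half is the passage to the subspace angle. Using
\begin{equation*}
\sin \phi_{\max}(\set T(\ve\lambda), \set V(\ve\mu))
= \sup_{\substack{X\in \set T(\ve\lambda)\\ \|X\|_\Htwo=1}}
\|(I-P(\ve\mu))X\|_\Htwo,
\end{equation*}
I would write any unit $X \in \set T(\ve\lambda)$ as $X = \sum_k \bigl(\alpha_k v[-\conj{\lambda_k}] + \beta_k v[-\conj{\lambda_k}]'\bigr)$ and form the candidate $Y \coloneqq \sum_k \bigl(\alpha_k \tilde v_k + \beta_k \tilde w_k\bigr) \in \set V(\ve\mu)$. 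The triangle inequality plus the two bounds above give $\|X - Y\|_\Htwo \le C_1 \epsilon \|(\ve\alpha,\ve\beta)\|_2$, and since $\hma M(\ve\lambda)$ from~\cref{eq:mhat_def} is positive definite because the $\lambda_k$ are distinct, $\|(\ve\alpha,\ve\beta)\|_2 \le C_2 \|X\|_\Htwo = C_2$. Because $P(\ve\mu)$ is the orthogonal projector onto $\set V(\ve\mu)$ and $Y\in \set V(\ve\mu)$, the best-approximation property yields $\|(I-P(\ve\mu))X\|_\Htwo \le \|X-Y\|_\Htwo \le C_1 C_2\, \epsilon$, so $C = C_1 C_2$ suffices.

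The main obstacle is accounting of the Lagrange coefficients in the bound on $\tilde w_k$: the entries $b_{k,t}$ can grow as $O(\epsilon^{-1})$ because they arise from differentiating $L_t$ at a point near the nodes, yet we only recover $O(\epsilon^j)$ from the $j$-th Taylor tail. The key observation is that for $j\ge 2$ the product $b_{k,t}\cdot\bar\eta_{k,t}^j = O(\epsilon^{j-1})$, so the leading error term is $O(\epsilon)$ even after summing, and in fact one gains an extra power from the fact that interpolation also kills the $j=2$ moment; the same accounting shows $\tilde v_k$ produces $O(\epsilon^3)$ since its coefficients $a_{k,t}$ are $O(1)$ in $\epsilon$. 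The final constant $C$ consequently depends on the relative configuration of the points $\bar\eta_{k,t}/\epsilon$ and on $\ve\lambda$ through $\hma M(\ve\lambda)$, but not on the scale $\epsilon$ itself, which is exactly the claim.
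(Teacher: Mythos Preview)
Your route via the Taylor expansion of $v[\mu]$ and Lagrange interpolation is genuinely different from the paper's and, if carried through correctly, would even yield sharper rates ($O(\epsilon^2)$ for the derivative generator, $O(\epsilon^3)$ for the value generator, versus the paper's $O(\epsilon)$ for both). The paper instead splits the three points as one for $v[-\conj\lambda_k]$ and two for $v[-\conj\lambda_k]'$, and for each piece computes the \emph{exact} $\Htwo$-optimal coefficients by minimizing a small quadratic form (Schur complements of the kernel Gram matrix), obtaining closed-form errors like $|\mu_1+\conj\lambda|^2/(2|\Re\lambda|\,|\mu_1-\lambda|^2)$ and $|\mu_2-\mu_3|^2/((2\Re\mu_2)^3|\mu_2+\conj\mu_3|^2)$ that are manifestly $O(\epsilon^2)$ regardless of how close the $\mu_{k,t}$ are to one another.

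There is, however, a real gap in your accounting. You assert $b_{k,t}=O(\epsilon^{-1})$ and $a_{k,t}=O(1)$, but the hypotheses only say the $\mu_{k,t}$ are distinct and within $\epsilon$ of $-\conj\lambda_k$; they need not be $\Theta(\epsilon)$-separated. If, say, $\bar\eta_{k,1}=\epsilon$, $\bar\eta_{k,2}=\epsilon(1+\delta)$, $\bar\eta_{k,3}=\epsilon(1+2\delta)$ with $\delta\ll 1$, then the Lagrange weights $L_t(0)$ and $L_t'(0)$ blow up like $\delta^{-2}$ and $\delta^{-2}\epsilon^{-1}$. Your conclusion that $C$ ``depends on the relative configuration $\bar\eta_{k,t}/\epsilon$'' is therefore a constant that is \emph{not} uniform in $\ve\mu$, and this uniformity is exactly what the application to \cref{thm:conv} needs (there one invokes \cref{thm:angle} along a sequence $\ve\mu^\ell$ with a single $C_2$).

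The fix within your framework is to stop bounding individual weights and bound the \emph{moments} $\sum_t a_{k,t}\bar\eta_{k,t}^j$ and $\sum_t b_{k,t}\bar\eta_{k,t}^j$ directly via the Lagrange remainder formula. For $j\ge 3$ one has $x^j-L[x^j](x)=\prod_t(x-\bar\eta_{k,t})\cdot h_{j-3}(\bar\eta_{k,1},\bar\eta_{k,2},\bar\eta_{k,3},x)$ (complete homogeneous symmetric polynomial), whence $|L[x^j](0)|\le\binom{j-1}{2}\epsilon^{j}$ and $|L[x^j]'(0)|\lesssim j^2\epsilon^{j-1}$, with constants depending on nothing but $j$. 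Summing against $\|(z-\lambda_k)^{-(j+1)}\|_\Htwo$ then gives a $C$ depending only on $\ve\lambda$, which is what you want. The paper's Schur-complement route sidesteps this bookkeeping entirely because the optimal coefficients never appear explicitly.
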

A proof is provided in the appendix.

\section{Outer Loop: Sampling the Full Order Model\label{sec:outer}}
Having shown in the previous section that
the local optimality conditions can be satisfied to arbitrary accuracy 
using the projected nonlinear least squares framework,
we now design an efficient algorithm constructing 
a sequence of projectors $\lbrace P(\ve\mu^\ell)\rbrace_{\ell=0}^\infty$
to solve the $\Htwo$-optimal model reduction problem.
As we assume the dominant cost is evaluating the full order model---namely evaluating $H(z)$---%
we choose a nested sequence of projectors
\begin{equation}
	\Range P(\ve\mu^\ell) \subset \Range P(\ve\mu^{\ell+1})
	\qquad
	\ve\mu^{\ell+1} \coloneqq \begin{bmatrix} \ve \mu^{\ell} & \ve \mu_\star^\ell \end{bmatrix}
\end{equation}
to reuse previous evaluations of $H(\mu_j)$.
In this section we provide an effective algorithm 
for choosing these new interpolation points $\ve \mu_\star^\ell$.
However there are many choices for $\ve \mu_\star^\ell$ 
that will yield locally optimal models upon convergence
as illustrated by the following theorem.

\begin{theorem}\label{thm:conv}
Let $H\in \Htwo$ and  $n_0, r$ be a positive integers where $n_0 \ge 2r$.
Given an initial $\ve\mu^{0} \in \C_+^{n_0}$, 
let $\ve\mu^{\ell+1} \coloneqq \begin{bmatrix}\ve \mu^\ell & \ve \mu_\star^\ell\end{bmatrix}$
where $\ve\mu_\star^\ell \in \C_+^{n_\ell}$
and the entries of $\ve\mu^\ell$ are distinct for all $\ell$.
Let $\widehat{H}_r^{\ell}$ be a locally optimal solution to the projected problem
\begin{align}
	\label{eq:conv_Hr}
	\widehat{H}_r^{\ell} &\coloneqq
		\!\! \argmin_{H_r \in \set R_r^+(\C)} \!\!
		\| P(\ve\mu^{\ell}) [H - H_r]\|_\Htwo 
	\text{ with } \|Q(\ve \lambda^\ell)) 
		P(\ve\mu^{\ell})[H-\widehat{H}_r^{\ell}] \|_\Htwo \!= \! 0
\end{align}
and $\ve\lambda^\ell$ are the poles of $\widehat H_r^\ell$.
If $\widehat{H}_r^{\ell}\to \widehat{H}_r\in \Htwo$
where $\widehat H_r$ has $r$ distinct poles $\ve\lambda \in \C_-^r$
and the sequence $\lbrace \ve \mu_\star^{\ell}\rbrace_{\ell=0}^\infty$ 
has limit points $-\ove\lambda$
then $\widehat{H}_r$ satisfies the first order necessary conditions of the $\Htwo$ problem~\cref{eq:opt_orig_Q};
i.e., $\widehat{H}_r$ satisfies the Meier-Luenberger conditions.
\end{theorem}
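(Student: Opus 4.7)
\emph{Proof proposal.} The plan is to chain Theorem~\ref{thm:opt} (a Meier--Luenberger residual is bounded by a subspace angle) with Theorem~\ref{thm:angle} (the subspace angle vanishes when enough interpolation points cluster near the mirrored poles), and then pass to the limit. For every $\ell$, the projected optimality assumption $\|Q(\ve\lambda^\ell) P(\ve\mu^\ell)[H - \widehat H_r^\ell]\|_\Htwo = 0$ combined with Theorem~\ref{thm:opt} yields
\begin{equation*}
	\|Q(\ve\lambda^\ell)[H - \widehat H_r^\ell]\|_\Htwo
		\le \|H - \widehat H_r^\ell\|_\Htwo \, \sin \phi_{\max}(\set V(\ve\mu^\ell), \set T(\ve\lambda^\ell)).
\end{equation*}
Since $\widehat H_r^\ell \to \widehat H_r$ in $\Htwo$, the residual norm on the right is uniformly bounded, so it suffices to show that the sine of the largest subspace angle tends to zero along the iterations.

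I would then establish two preliminary convergences. First, $\ve\lambda^\ell \to \ve\lambda$: because each $\widehat H_r^\ell \in \set R_r^+(\C)$ has degree at most $r$ and the limit $\widehat H_r$ has $r$ distinct poles, $\Htwo$-convergence of the rational functions forces convergence of the denominator coefficients (after a normalization), hence of their roots. Second, given any $\epsilon > 0$, for all sufficiently large $\ell$ and each index $k$, there exist three distinct entries $\mu_{k,1}, \mu_{k,2}, \mu_{k,3}$ of $\ve\mu^\ell$ within $\epsilon$ of $-\conj{\lambda_k^\ell}$: the hypothesis that each $-\conj{\lambda_k}$ is a limit point of $\lbrace \ve\mu_\star^\ell\rbrace_{\ell=0}^\infty$, together with distinctness of all entries of $\ve\mu^\ell$, guarantees infinitely many distinct accumulated samples near each $-\conj{\lambda_k}$, and the first convergence lets one replace $\lambda_k$ by $\lambda_k^\ell$ at the cost of shrinking $\epsilon$ slightly.

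With these two facts in hand, Theorem~\ref{thm:angle} (applied to $\set V(\ve\mu^\ell)$ and $\set T(\ve\lambda^\ell)$) gives $\sin \phi_{\max}(\set V(\ve\mu^\ell), \set T(\ve\lambda^\ell)) \le C\epsilon$ for all large $\ell$, so the right-hand side of the displayed bound tends to zero; hence $\|Q(\ve\lambda^\ell)[H - \widehat H_r^\ell]\|_\Htwo \to 0$. To finish, I would pass the limit through the projector: the Gram matrix $\hma M(\ve\lambda)$ in~\cref{eq:mhat_def} depends rationally on the $\lambda_k$ and is invertible at $\ve\lambda$ (its blocks in~\cref{eq:mhat1}--\cref{eq:mhat3} form a confluent Cauchy matrix, nonsingular precisely because the $\lambda_k$ are distinct and lie in $\C_-$), so $\ve\lambda \mapsto Q(\ve\lambda)$ is continuous in operator norm on $\Htwo$. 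Combined with $\widehat H_r^\ell \to \widehat H_r$, this gives $Q(\ve\lambda^\ell)[H - \widehat H_r^\ell] \to Q(\ve\lambda)[H - \widehat H_r]$ in $\Htwo$, whence $\|Q(\ve\lambda)[H - \widehat H_r]\|_\Htwo = 0$, which via~\cref{eq:opt_orig_Q} is exactly the Meier--Luenberger condition. The main obstacle will be this final continuity-and-limit step, especially verifying uniform control of $\hma M(\ve\lambda^\ell)^{-1}$; the rest is essentially a quantitative packaging of Theorems~\ref{thm:opt} and~\ref{thm:angle}.
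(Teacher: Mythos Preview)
Your proposal is correct but organized differently from the paper. You apply Theorem~\ref{thm:opt} directly at the \emph{iterate} poles $\ve\lambda^\ell$, drive $\|Q(\ve\lambda^\ell)[H-\widehat H_r^\ell]\|_\Htwo$ to zero via Theorem~\ref{thm:angle}, and then pass to the limit using operator-norm continuity of $\ve\lambda\mapsto Q(\ve\lambda)$. The paper instead fixes the \emph{limit} poles $\ve\lambda$ from the outset: it inserts $P(\ve\mu^\ell)+(I-P(\ve\mu^\ell))$ and $Q(\ve\lambda^\ell)+(I-Q(\ve\lambda^\ell))$ into $\|Q(\ve\lambda)[H-\widehat H_r]\|_\Htwo$ and bounds the resulting pieces by the two angles $\phi_{\max}(\set T(\ve\lambda),\set V(\ve\mu^\ell))$ and $\phi_{\max}(\set T(\ve\lambda),\set T(\ve\lambda^\ell))$, the latter handled by a dedicated Lemma~\ref{lem:Qangle_full} showing $\sin\phi_{\max}(\set T(\ve\lambda),\set T(\ve\xi))\le C\max_j|\lambda_j-\xi_j|$---which is precisely your continuity claim made quantitative. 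One point to tighten in your route: when you invoke Theorem~\ref{thm:angle} at the varying $\ve\lambda^\ell$, its constant depends on $\ve\lambda^\ell$ (through $\sigma_{\min}(\hma M(\ve\lambda^\ell))$ and the individual $\lambda_k^\ell$), so you must argue it is uniformly bounded in $\ell$; this is the same continuity issue you already flag at the end, so your ``main obstacle'' in fact appears twice. The paper sidesteps the first occurrence by applying Theorem~\ref{thm:angle} once at the fixed $\ve\lambda$, at the price of needing Lemma~\ref{lem:Qangle_full}. Either way the same continuity ingredient is required; your packaging via Theorem~\ref{thm:opt} is arguably more direct, while the paper's decomposition yields explicit $\epsilon$-bounds at every stage. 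Both rely on the pole convergence $\ve\lambda^\ell\to\ve\lambda$, which the paper isolates as Lemma~\ref{lem:poles}.
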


Inspired by IRKA, 
we could satisfy the conditions of this theorem using the poles
of the current iterate flipped across the imaginary axis.
If $\widehat H_r^\ell$ has poles $\ve\lambda^\ell$
then the choice $\ve\mu_\star^\ell = -\ove\lambda^\ell$
has the desired limit points 
as $\ve\lambda^\ell \to \ve\lambda$ if $\widehat H_r^\ell \to \widehat H_r$ (\cref{lem:poles}).
In practice, 
we desire to use fewer evaluations of the full order model.
Hence at each step we add a single new interpolation point;
e.g., $\mu_\star^\ell \in \C_+$.
Here we describe one effective choice 
and its practical modifications.

\subsection{Selecting Interpolation Points\label{sec:outer:selecting}}
As with the IRKA-inspired update,
we choose new interpolation points $\mu_\star^\ell$ 
from the poles of $\widehat H_r^\ell$ flipped across the imaginary axis.
Specifically we will choose $\mu_\star^\ell$ 
to be the flipped pole of $\widehat H_r^\ell$ 
that is furthest away from $\set V(\ve\mu)$ in terms of the subspace angle
\begin{equation}\label{eq:mu_iter}
	\mu_\star^\ell \coloneqq - \conj{\lambda}_\star^\ell 
	\quad \text{where} \quad
	\lambda_\star^\ell \coloneqq  \argmax_{\lambda \in \ve \lambda^\ell} \  
		\sin \phi_{\max} (\set V(\ve\mu^\ell), \set T(\lambda) ),
\end{equation}
where $\ve\lambda^\ell$ are the poles of $\widehat H_r^\ell$.
Provided these choices of $\lambda_\star^\ell$ do not avoid a particular pole of $\widehat H_r^\ell$ asymptotically,
then $\lbrace \mu_\star^\ell\rbrace_{\ell=0}^\infty$ has limit points $-\ove\lambda$
and this choice satisfies the assumptions of \cref{thm:conv}.
These subspace angles can be computed via~\cref{eq:cos_max}:
\begin{multline}
	\setlength\arraycolsep{1pt}
	\!\!\!
	\cos \phi_{\max}(\set V(\ve\mu), \set T(\lambda)) \! = \!
		\sigma_{\min}  \! \left(
			\!\! \ma M(\ve\mu)^{-\frac12}\!\!
			\begin{bmatrix}
				\frac{1}{\mu_1 - \lambda} & \frac{1}{(\mu_1 - \lambda)^{2}} \\
				\vdots & \vdots \\
				\frac{1}{\mu_n - \lambda} & \frac{1}{(\mu_n - \lambda)^{2}} 
			\end{bmatrix}
			\!\!
			\begin{bmatrix}
				\frac{-1}{\conj{\lambda} + \lambda} & \frac{1}{(\conj{\lambda} + \lambda)^{2}} \\	
				\frac{1}{(\conj{\lambda} + \lambda)^{2}} & \frac{-2}{(\conj{\lambda} + \lambda)^{3}} 	
			\end{bmatrix}^{-\frac12}
		\right)\!\!.\!\!\!\!
\end{multline}
Evaluating this subspace angle is inexpensive, taking only $\order(n^2)$ operations,
and can reuse the factorization of $\ma M(\ve\mu)$ 
computed as part of solving the nonlinear least squares problem
as discussed in \cref{sec:ratfit:weight}.

\subsubsection{When \cref{thm:conv} Does Not Hold}
A notable case that does not satisfy the assumptions of this theorem 
occurs when $H\in \set R_r^+(\R)$.
For this case, given any set of distinct samples $\ve\mu  \in \C_+^n$
where $n\ge 2r$ we recover $H$ exactly, namely $\widehat H_r = H$.
Thus the poles of $\widehat H_r^\ell$ are always the same
and the sequence $\lbrace\mu_\star^\ell\rbrace_{\ell=0}^\infty$
violates the assumption of being distinct.
However in this case, 
we still obtain an optimal reduced order model.

\subsubsection{Improved Updates}
Before continuing,
we note there may be better updates than~\cref{eq:mu_iter}.
For example, we could choose $\mu$ to minimize the subspace angle
between the tangent subspace associated with the current iterate and the projection subspace,
\begin{equation}\label{eq:mu:opt}
	\min_{\mu \in \C_+} \ 
	\sin \phi_{\max}( \set T(\ve\lambda(\widehat{H}_r^\ell)), \set V(\ve\mu^\ell) \cup \set V(\mu)).
\end{equation}
However, unlike the update rule we propose,
this is a nonconvex optimization problem 
and evaluating these subspace angles is both expensive and ill-conditioned.

\subsection{Practical Algorithm\label{sec:outer:algorithm}}
In numerical practice, we modify the iteration given in \cref{eq:mu_iter} 
as described in \cref{alg:ph2}.

\begin{algorithm}[t]
\begin{minipage}{\linewidth}
\begin{algorithm2e}[H]
	\setlength\arraycolsep{3pt}
	\Input{Real FOM $H \! \in\! \Htwo$, order $r>0$, initial samples $\ve\mu^0 \! \in \! \C_+^{n_0}$, tolerance~$\tau \!>\! 0$}
	\Output{Real ROM $H_r \! \in\! \Htwo$}
	\For{$\ell=0,1,2,\ldots$}{
		Set $n$ to be the length of $\ve\mu^\ell$\;
		\lIf{$n < 2r$\label{alg:ph2:dim}}{
			Set $r \leftarrow 2\lfloor n/4\rfloor$ for only the next iteration 
		}
		Solve projected problem: 
			$\widehat{H}_r^\ell \leftarrow \argmin_{H_r \in \set R_r^+(\R) } \|P(\ve\mu^\ell) [ H - H_r]\|_\Htwo$\;
			\label{alg:ph2:pnls}
		\lIf{$\| \widehat{H}_r^\ell - \widehat{H}_r^{\ell-1}\|_\Htwo <\tau$ \label{alg:ph2:stop}}{ {\bf break} }
		Compute poles of $\widehat{H}_r^\ell$: $\ve\lambda \leftarrow \lambda(\widehat{H}_r^\ell)\subset \C^r_-$ \;
		Using AAA, construct a degree-$(\widehat r, \widehat r)$ rational approximation $G$ of 
			$\ve\mu, H(\ve\mu)$\;
		Compute poles of $G$: $\ve \nu \leftarrow \lambda(G)$; 
			flip into $\C_-^r$ by $\nu_j \leftarrow -|\Re	\nu_j| + \iu \Im \nu_j$\;
		Order $\ve\nu$ to minimize $|\nu_j -\lambda_j|$ using the Kuhn-Munkres algorithm\;
		\lIf{$\lambda_j \notin \set F(\ve\mu)$}{set $\lambda_j \leftarrow \nu_j$ \label{alg:ph2:replace}} 
		Compute furthest pole: $\lambda_\star \leftarrow \argmax_{\lambda \in \ve\lambda} \phi_{\max}(\set M(\lambda), \set V(\ve\mu))$ \;
			\label{alg:ph2:update}
		\lIf{$\Im \lambda_\star\ne 0$}{
			Update samples: $\ve\mu^{\ell+1} \leftarrow \begin{bmatrix} \ve \mu^\ell & -\lambda_\star& -\conj{\lambda_\star} \;\;   \end{bmatrix}$
			\label{alg:ph2:pole:conj}
		}
		\lElse{
			Update samples: $\ve\mu^{\ell+1} \leftarrow \begin{bmatrix} \ve \mu^\ell & -\lambda_\star \end{bmatrix}$ 
		}
	}
\end{algorithm2e}
\end{minipage}
\vspace{-1.5em}
\caption{Projected Nonlinear Least Squares for Real $\Htwo$ Model Reduction}
\label{alg:ph2}
\end{algorithm}

\subsubsection{Conjugate Samples}
Since we assume that the full order model $H\in \Htwo$ is real,
we can evaluate $H(\mu)$ and $H(\conj{\mu})$
through only one evaluation of $H$ as $\conj{H(\mu)} = H(\conj{\mu})$.
Thus if $\mu_\star^\ell$ is not on the real line, 
we automatically include its conjugate in line~\ref{alg:ph2:pole:conj}
which incurs no additional evaluation of $H$.

\subsubsection{Intermediate Dimension}
If the rational approximation problem is underdetermined,
we choose an intermediate dimension for the reduced order model $\widehat{r}<r$
such that the rational approximation problem on line~\ref{alg:ph2:pnls}
exactly determined or overdetermined.
We always pick an even order (cf.\ line~\ref{alg:ph2:dim}) 
as odd real rational models must have a pole on the real line,
but $\widehat H_r$ may not.

\subsubsection{Spurious Poles\label{sec:outer:algorithm:spurious}}
The least squares rational approximation step on line~\ref{alg:ph2:pnls}
may yield a solution whose poles are spurious for our purposes.
For example, a pole may be far away from existing samples;
e.g., with $\Im \mu_j \in [-1,1]$, there could be a pole $\lambda_k$ with  $\Im \lambda_k \approx 10^7$.
In this case as $(\mu_j -\lambda_k)^{-1}$ is small 
leading to inaccurate estimation of $\lambda_k$.
Similarly, a pole $\lambda_k$ may appear on the imaginary axis with $\Re \lambda_k = 0$,
an artifact of the box constraints introduced to ensure $H_r\in \Htwo$
(see \cref{sec:ratfit:param}). 
Without modification, a spurious pole can trigger a cascade leading the algorithm to fail:
the subspace angle criteria on line~\ref{alg:ph2:update}
ensures these spurious poles are selected,
causing $\ma M(\ve\mu)$ to be ill-conditioned,
and leading subsequent rational approximation steps to fail.
To mitigate this failure mode we introduce a heuristic to 
identify and replace these spurious poles.
We label poles as spurious if they fall outside of
box containing $\ve\mu$ that has been enlarged by a factor of ten 
\begin{equation}
	\set F(\ve\mu) \coloneqq [-10\max_{\mu  \in \ve\mu} \Re\mu, -0.1 \min_{\mu \in \ve\mu} \Re\mu]
			\times \iu [10\min_{\mu\in \ve \mu} \Im \mu, 10\max_{\mu \in \ve\mu} \Im \mu]
		\subset \C_- .
\end{equation}
Scaling by a factor of ten balances the need allow
new interpolation points $\mu$ to extend beyond existing $\ve\mu$
while still identifying likely spurious poles.
When a pole has been labeled spurious,
we replace this pole with the corresponding pole from a AAA rational approximation 
as shown on line~\ref{alg:ph2:replace};
note this AAA rational approximation was already constructed during 
the initialization of the rational approximation step (see \cref{sec:ratfit:init}).
Although the AAA poles are not optimal, 
in practice they are not spurious and
provide relevant information about $H$.
Although this alters the iteration presented in \cref{thm:conv},
this result still applies as if $\widehat H_r^\ell\to \widehat H_r$,
eventually all poles lie in $\set F(\ve\mu)$
and this modification is inactive.

\subsubsection{Termination Criteria}
Here we terminate the algorithm when the difference 
between subsequent iterates measured in the $\Htwo$ norm
is small (line~\ref{alg:ph2:stop}),
choosing this norm to remove the effects of parameterization of $H_r$.
Computing this quantity only requires $\order(r^3)$ operations
using~\cref{eq:H2_gramian},
Ideally based on \cref{thm:opt}, we would prefer to terminate when
the subspace angle between $\set V(\ve\mu^\ell)$ and $\set T(\ve\lambda(\widehat H_r^\ell))$
is sufficiently small;
unfortunately this quantity proves difficult to compute accurately.

\section{Inner Loop: Constructing a Reduced Order Model\label{sec:ratfit}}
A critical component of our Projected $\Htwo$ approach 
is the construction of a \emph{weighted least squares rational approximant}
on line~\ref{alg:ph2:pnls} of \cref{alg:ph2}
\begin{equation}\label{eq:ratfit_equiv}
	\widehat H_r^\ell  \coloneqq 
		\argmin_{H_r \in \set R_r^+(\R)}
			\| P(\ve\mu^\ell)[H - H_r]\|_\Htwo
		=
		\argmin_{H_r \in \set R_r^+(\R)}
		\|
			\ma M(\ve\mu^{\ell})^{-\frac12} [
			H(\ve\mu^\ell)
			-
			H_r(\ve\mu^\ell)
			]
		\|_2.
\end{equation}
In \cref{thm:angle},
an important assumption required for the outer loop to yield local optimizers
was that these iterates $\widehat H_r^\ell$ are local optimizers of~\cref{eq:ratfit_equiv}.
Unfortunately, many popular rational fitting algorithms do
not yield local optimizers for this problem.
For example, Adaptive Anderson-Antoulas (AAA)~\cite{NST18} exactly interpolates at some $\mu_j$
and suboptimally approximates on the remainder.
Both the Sanathanan-Koerner iteration~\cite{SK63}
and Vector Fitting~\cite{GS99} have been shown, in general, 
to converge to rational approximants that do not satisfy the necessary conditions~\cite{Shi16}.
This lack of existing results motivates our development of a nonlinear least squares approach to
solve this weighted rational approximation problem.
Here we introduce a two-term partial fraction expansion of $H_r$
to implicitly enforce the constraint that $H_r$ is real
and apply variable projection~\cite{GP73}
to reduce the dimension of the optimization problem.
Additional information about this approach appears
in a companion manuscript~\cite{HM18y}.

\subsection{Parameterization\label{sec:ratfit:param}}
To construct a rational approximation,
we must first choose a parameterization of 
the space of real rational functions of degree $(r-1,r)$: $\set R_r^+(\R)$.
As discussed in~\cite[sec.~3.4]{HM18y}, naive approaches have significant drawbacks:
parametrizing $\set R_r^+(\R)$ as a ratio of monomials rapidly leads to ill-conditioning
and a pole-residue parameterization requires additional nonlinear constraints
to ensure the rational function has a real representation.
Instead, we use a two-term partial fraction expansion following~\cite[subsec.~4.2]{HM18y}:
\begin{equation}\label{eq:param}
	H_r^{\text{PF} }(z; \ve a, \ve b) \coloneqq
	\begin{cases}
		\displaystyle \phantom{\frac{a_r}{z + b_r}}\phantom{+}
		\sum_{k=1}^{\lfloor r/2 \rfloor} \frac{ a_{2k} z + a_{2k-1}}{z^2 + b_{2k} z + b_{2k-1}}, & r \text{ even;}\\
		\displaystyle \frac{a_r}{z + b_r}
		+ \! \sum_{k=1}^{\lfloor r/2 \rfloor} \frac{ a_{2k} z + a_{2k-1}}{z^2 + b_{2k} z + b_{2k-1}}, 
		& r \text{ odd;}
	\end{cases}
	\quad \ve a, \ve b\in \R^r.
\end{equation} 
This parameterization has several advantages:
it is (comparably) numerically stable,
requires only $2r$ \emph{real} parameters, 
and allows us to enforce $H_r^{\text{PF}} \in \set R_r^+(\R)$ 
through a box constraint.
Recall $H_r^{\text{PF}} \in \set R_r^+(\R)\subset \Htwo$ if its poles are in the left half plane.
As the poles of $H_r^{\text{PF}}$ are $-b_{2k}/2 \pm \sqrt{b_{2k}^2/4 - b_{2k+1}}$
(and $-b_r$ if $r$ is odd)
it is both necessary and sufficient to require $b_{k}> 0$
for $H_r^{\text{PF}}$ to be in $\set R_r^+(\R)$.

\subsection{Variable Projection}
Using this parameterization, 
we construct the rational approximant
by solving the rational approximation problem~\cref{eq:ratfit_equiv}
\begin{equation}\label{eq:opt1}
	\min_{\substack{ \ve a, \ve b\in \R^r \\ b_k > 0} } \left\|
		\ma M(\ve\mu)^{-\frac12} [H(\ve\mu) - H_r^{\text{PF}}(\ve\mu; \ve a, \ve b)] 
	\right\|_2.
\end{equation}
We  write $H_r^{\text{PF}}(\ve\mu; \ve a, \ve b)$
as the matrix-vector product
$H_r^{\text{PF}}(\ve\mu; \ve a, \ve b) = \ma \Theta(\ve b) \ve a$
where 
\begin{align} \label{eq:theta}
	\ma \Theta(\ve b) &\coloneqq 
	\begin{cases}
		\begin{bmatrix} \ma \Omega([\ve b]_{1:2}) & \cdots & \ma \Omega([\ve b]_{r-1:r})  \end{bmatrix}, 
		& r \text{ even};\\
		\begin{bmatrix} \ma \Omega([\ve b]_{1:2}) & \cdots & \ma \Omega([\ve b]_{r-2:r-1}) &
			(\ve \mu + b_r)^{-1}  \end{bmatrix}, 
		& r \text{ odd};
	\end{cases} \\
	\ma \Omega\left(\begin{bmatrix} b_1 \\ b_2 \end{bmatrix}\right) & \coloneqq
			\begin{bmatrix}
				\frac{\mu_1}{\mu_1^2 + b_2 \mu_1 + b_1} &
				\frac{1}{\mu_1^2 + b_2 \mu_1 + b_1} \\
				\vdots & \vdots \\
				\frac{\mu_n}{\mu_n^2 + b_2 \mu_n + b_1} 
				& \frac{1}{\mu_n^2 + b_2 \mu_n + b_1} 
			\end{bmatrix} \in \C^{n\times 2}.
\end{align}
This exposes the separable structure of~\cref{eq:opt1},
which for fixed $\ve b$ yields a linear least squares problem in $\ve a$:
\begin{equation}\label{eq:opt2}
	\min_{\substack{\ve a, \ve b\in \R^r \\ b_k > 0}} \| \ma M(\ve\mu)^{-\frac12}[H(\ve\mu) - \ma \Theta(\ve b) \ve a]\|_2.
\end{equation}
As the objective function involves the complex $2$-norm,
we recast this as an optimization problem over the real $2$-norm
by splitting into real and imaginary components 
\begin{align}
	\setlength\arraycolsep{3pt}
	\min_{\substack{\ve a, \ve b\in \R^r\\ b_k > 0}}
		\left\| 
			\begin{bmatrix} 
				\Re \ma M(\ve\mu)^{-\frac12} H(\ve\mu) \\ 
				\Im \ma M(\ve\mu)^{-\frac12} H(\ve\mu)
			\end{bmatrix}
		 - 
			\begin{bmatrix} 
				\Re \ma M(\ve\mu)^{-\frac12}\ma \Theta(\ve b) \\
				 \Im \ma M(\ve\mu)^{-\frac12} \ma \Theta(\ve b) 
			\end{bmatrix}
		\ve a \right\|_2.
\end{align}
Applying variable projection 
yields an equivalent optimization problem over $\ve b$ alone
\begin{align}\label{eq:ratfit_vp}
	\min_{\substack{\ve b \in \R^r \\ b_k > 0}}
		\left\|
			\left[ \ma I - 
			\begin{bmatrix} 
				\Re \ma M(\ve\mu)^{-\frac12}\ma \Theta(\ve b) \\
				 \Im \ma M(\ve\mu)^{-\frac12} \ma \Theta(\ve b) 
			\end{bmatrix}
			\begin{bmatrix} 
				\Re \ma M(\ve\mu)^{-\frac12}\ma \Theta(\ve b) \\
				 \Im \ma M(\ve\mu)^{-\frac12} \ma \Theta(\ve b) 
			\end{bmatrix}^+
			\right]
			\begin{bmatrix} 
				\Re \ma M(\ve\mu)^{-\frac12} H(\ve\mu) \\ 
				\Im \ma M(\ve\mu)^{-\frac12} H(\ve\mu)
			\end{bmatrix}
		\right\|_2,
\end{align}
where $^+$ denotes the pseudoinverse.
Computationally we replace $\ma M(\ve\mu)^{-\frac12}$ with a high accuracy pivoted Cholesky factor
described in \cref{sec:ratfit:weight}.
\Cref{alg:ratfit} shows how to construct the residual and Jacobian of~\cref{eq:ratfit_vp}.
Note on line~\ref{alg:ratfit:qr} we use a column-pivoted and row sorted QR
to improve conditioning; see~\cite[sec.~19.4]{Hig02}.

\begin{algorithm}[t]
\begin{minipage}{\linewidth}
\begin{algorithm2e}[H]
	\Input{Parameters $\ve b \in  \R^r$, factorization $\ma M(\ve\mu) = \ma P\ma L \ma D \ma L^*\ma P^*$~\cref{eq:ldl}}
	\Output{Residual $\uve r\in \R^{2n}$ and Jacobian $\uma J\in \R^{(2n)\times r}$}
	Form $\ma \Theta \leftarrow \ma \Theta(\ve b)$ as given in \cref{eq:theta}\;
	Compute the short form QR decomposition
	 $\uma Q \uma R \leftarrow \begin{bmatrix} \Re \ma D^{-1/2}\ma L^{-*}\ma P \ma \Theta \\ 
			\Im \ma D^{-1/2}\ma L^{-*} \ma P \ma \Theta \end{bmatrix}$\; \label{alg:ratfit:qr}
	Define $\uve h \leftarrow \begin{bmatrix} \Re \ma D^{-1/2}\ma L^{-*}\ma P H(\ve\mu) \\ 
		\Im \ma D^{-1/2}\ma L^{-*} \ma P H(\ve\mu) \end{bmatrix}$ \;
	Compute (real) residual $\uve r \leftarrow \uve h - \uma Q \, \uma Q^\trans \uve h$ \; 
	Form complex residual $\ve r \leftarrow [\uve r]_{1:n} + i[\uve r]_{n+1:2n} $\;
	Compute linear coefficients $\ve a 	\leftarrow \uma R^+\uma Q^\trans \uve h$\;
	\For{$k=1,\ldots, \lfloor r/2\rfloor$}{
		$\ve d \leftarrow \ve\mu^2 + b_{2k}\ve\mu + b_{2k-1}$\;
		$[\uma K]_{\cdot,2k-1} \leftarrow 
		 [\ma I - \uma Q\, \uma Q^\trans] 
		\begin{bmatrix}
		 \Re \ma D^{-1/2}\ma L^{-*} \ma P \diag(\ve d)^{-2} (a_{2k} \ve\mu + a_{2k-1}) \\
		 \Im \ma D^{-1/2}\ma L^{-*} \ma P \diag(\ve d)^{-2} (a_{2k} \ve\mu + a_{2k-1}) \\
		\end{bmatrix}$\;
		$[\uma K]_{\cdot,2k} \leftarrow 
		 [\ma I - \uma Q\, \uma Q^\trans] 
		\begin{bmatrix}
		 \Re \ma D^{-1/2}\ma L^{-*} \ma P \diag(\ve\mu)\diag(\ve d)^{-2} (a_{2k} \ve\mu + a_{2k-1}) \\
		 \Im \ma D^{-1/2}\ma L^{-*} \ma P \diag(\ve\mu)\diag(\ve d)^{-2} (a_{2k} \ve\mu + a_{2k-1}) \\
		\end{bmatrix}$\;
		$[\uma L]_{\cdot,2k-1} 
			\leftarrow \uma Q\uma R^{+\trans} 
			\begin{bmatrix}
				 \Re \diag(\ve d)^{-2*}\ma P^* \ma L^{-1}\ma D^{-1/2}\ve r \\
				 \Im \diag(\ve d)^{-2*}\ma P^* \ma L^{-1}\ma D^{-1/2}\ve r
			\end{bmatrix}$\;
		$[\uma L]_{\cdot,2k} 
			\leftarrow \uma Q\uma R^{+\trans} 
			\begin{bmatrix}
				 \Re \diag(\ve \mu)\diag(\ve d)^{-2*}\ma P^*\ma L^{-1}\ma D^{-1/2}\ve r \\
				 \Im \diag(\ve \mu)\diag(\ve d)^{-2*}\ma P^*\ma L^{-1}\ma D^{-1/2}\ve r
			\end{bmatrix}$\;
	}
	\If{$r$ is odd}{
		$[\uma K]_{\cdot,r} \leftarrow
			[\ma I - \uma Q\, \uma Q^\trans] 
			\begin{bmatrix}
			 \Re \ma D^{-1/2}\ma L^{-*} \ma P \diag(\ve \mu+ b_r)^{-2} a_r \\
		 	\Im \ma D^{-1/2}\ma L^{-*} \ma P \diag(\ve \mu + b_r)^{-2} a_r 
		\end{bmatrix}$\;
		$[\uma L]_{\cdot,r} 
			\leftarrow \uma Q\uma R^{+\trans} 
			\begin{bmatrix}
				 \Re \diag(\ve \mu + b_r)^{-2*}\ma P^*\ma L^{-1} \ma D^{-1/2}\ve r \\
				 \Im \diag(\ve \mu + b_r)^{-2*}\ma P^*\ma L^{-1} \ma D^{-1/2}\ve r
			\end{bmatrix}$\;
	}
	$\uma J \leftarrow \uma K + \uma L$\;  
\end{algorithm2e}
\end{minipage}
\vspace{-1.5em}
\caption{Residual and Jacobian for Real Partial Fraction Parameterization}
\label{alg:ratfit}
\end{algorithm}

\subsection{Optimization\label{sec:ratfit:opt}}
There are many algorithms for nonlinear least squares problems.
Here we use Branch, Coleman, and Li's trust region algorithm~\cite{BCY99}
as implemented in SciPy's \verb|least_squares|~\cite{scipy}
due to its ability to enforce box constraints.
This requires the residual and Jacobian of~\cref{eq:opt1},
which we compute using \cref{alg:ratfit}.
When $\uma R$ is not invertible,
we terminate the optimization
as the rational approximation has degree less than $r$.

\subsection{Initialization\label{sec:ratfit:init}}
The rational approximation problem often has many local minimizers with large mismatch.
To ensure that we find a good local minimizer with small mismatch,
at each step we try two initializations of the optimization algorithm to solve projected problem~\cref{eq:ratfit_vp},
keeping the one with smaller mismatch.
One initialization uses the poles of the previous iterate when both are of the same dimension.
The other initialization uses the AAA algorithm to construct a degree $(r,r)$ rational approximant
ignoring the weighting.
As these poles will not in general appear in conjugate pairs---%
a requirement of $\set R_r^+(\R)$---%
we use the Kuhn-Munkres algorithm to pair these poles
with their nearest conjugate-pair, average them,
and flip into the left half plane if necessary
to find poles of an element of $\set R_r^+(\R)$.

\subsection{Evaluating the Gram Matrix\label{sec:ratfit:weight}}
Cauchy matrices---such as $\ma M(\ve\mu)$ appearing in~\cref{eq:ratfit_equiv}---%
have a well-deserved reputation for being ill-conditioned.
Our application is no exception.
As the outer loop converges
the entries of $\ve\mu$ become increasingly close 
and consequently $\ma M(\ve\mu)$ becomes increasingly ill-conditioned.
Fortunately the Cauchy matrix structure
enables factorizations with high relative accuracy~\cite{BKO02,Dem00}
which we can use to accurately compute $\| \ma M(\ve\mu)^{-\frac12} \ve z\|_2$ for any $\ve z \in \C^n$.\linelabel{line:sqrt_chol}
Following Demmel~\cite[Alg.~3]{Dem00},
we compute the Cholesky factorization of $\ma M(\ve\mu)$ 
using Gaussian elimination with complete pivoting 
\begin{equation}\label{eq:ldl}
	\ma M(\ve\mu) = \ma P \ma L \ma D\ma L^*\ma P^*
\end{equation} 
where $\ma D$ is a diagonal matrix, $\ma L$ is lower triangular,
and $\ma P$ is a permutation matrix.
As $\ma M(\ve\mu)$ is Hermitian, 
we can perform the necessary pivoting a priori reducing the computational complexity 
of this decomposition from $\order(n^3)$
to $\order(n^2)$ operations~\cite[Alg.~4]{Dem00}.
Then we evaluate this norm as 
\begin{equation}
	\| P(\ve\mu) F\|_\Htwo = 
	\| \ma M(\ve\mu)^{-\frac12}F(\ve\mu)\|_2 = 
		\| \ma D^{-\frac12}\ma L^{-*} \ma P F(\ve\mu)\|_2
	\quad \forall F \in \Htwo.
\end{equation}
For completeness, \cref{alg:ldl} describes how to compute this high relative accuracy Cholesky factorization
where $\odot$ denotes the Hadamard (entry-wise) product.

\begin{algorithm}[t]
\begin{minipage}{\linewidth}
\begin{algorithm2e}[H]
\Input{$\ve\mu \in \C^n$ determining $[\ma M(\ve\mu)]_{j,k} = (\mu_j + \conj{\mu_k})^{-1}$}
\Output{Permutation matrix $\ma P\in \R^{n\times n}$, 
		lower triangular matrix $\ma L \in \C^{n\times n}$, and
		diagonal matrix $\ma D\in \R^{n\times n}$
		such that $\ma M(\ve\mu) = \ma P\ma L\ma D\ma L^*\ma P^*$
	}
	$\ma P = \ma I\in \R^{n\times n}$\;
	$\ve s \leftarrow (\ve \mu + \conj{\ve\mu} )^{-1}$\;
	\For{$k=1,\ldots, n$}{
		$j \leftarrow \argmax_{j=k,\ldots n} s_j$\;
		Permute $\ma P_{\cdot,k},\ma P_{\cdot, j}\leftarrow \ma P_{\cdot,j}, \ma P_{\cdot,k}$\;
		Permute $\mu_j,\mu_k \leftarrow \mu_k,\mu_j$\;
		Permute $s_j, s_k \leftarrow s_k, s_j$\;
		$[\ve s]_{k+1:n}\leftarrow
				|[\ve\mu]_{k+1:n} - \mu_k|^2\odot|[\ve\mu]_{k+1:n} +\conj{\mu_k}|^{-2}
		$
	}
	$\ve g \leftarrow \ve 1 \in \C^n$\;
	\For{$k=1,\ldots,n-1$}{
		$[\ma D]_{k,k} \leftarrow \mu_k + \conj{\mu_k}$\;
		$[\ma L]_{k:n,k} \leftarrow  [\ve g]_{k:n} \odot ([\ve \mu]_{k:n} + \conj{\mu_k})^{-1}$\;
		$[\ve g]_{k+1:n} \leftarrow 
			[\ve g]_{k+1:n} \odot ( [\ve\mu]_{k+1:n} - \mu_k) \odot ([\ve\mu]_{k+1:n} + \conj{\mu_k})^{-1}$\;
	}
	$[\ma D]_{n,n} \leftarrow (\mu_n + \conj{\mu_n})^{-1}$\;
	$[\ma L]_{n,n} \leftarrow g_n$\;
\end{algorithm2e}
\end{minipage}
\vspace{-1.5em}
\caption{Pivoted $\ma L \ma D\ma L^*$ factorization of $\ma M(\ve\mu)$}
\label{alg:ldl}
\end{algorithm}

\section{Numerical Experiments\label{sec:results}}
Here we present numerical experiments comparing our
Projected $\Htwo$ approach to IRKA, TF-IRKA, and QuadVF.
Following the principles of reproducible science,
our code implementing these algorithms and generating the figures is available at 
{\tt \url{https://github.com/jeffrey-hokanson/SYSMOR}}.

\subsection{Test Problems}
Systems with oscillatory behavior prove challenging for model reduction.
Our two test problems have significant oscillatory behavior
as illustrated by the large number of peaks in their Bode plots 
in Figures~\ref{fig:iss} and \ref{fig:delay}.

\subsubsection{ISS 1R Component}
The 1R component of the International Space Station 
is a standard test problem for model reduction~\cite[subsec.~2.11]{CD02}
with a sparse state-space representation of dimension $270$ with $405$ nonzeros.
Here we consider the $(1,1)$ block of this transfer function.

\subsubsection{Delay System}
Here we slightly modify the delay system from~\cite[sec.~5]{BG09}:
\begin{equation}
	\label{eq:delay}
	\begin{split}
	&H(z) = \ve c^\trans ( z \ma E - \ma A_0 - e^{-\tau z} \ma A_1)^{-1}\ve b \qquad \text{where}\\
	& 
	\ma E \coloneqq \frac{2}{\sqrt{\epsilon}}\ma I + \ma T, \quad
	\ma A_0 \coloneqq 
		\frac{2 + 2\rho}{\tau \rho} \left(
			\ma T - 	
			\frac{2}{\sqrt{\epsilon}} \ma I
		\right), \quad  
	\ma A_1 \coloneqq
		\frac{2- 2\rho}{\tau \rho} \left(
			\ma T - 	
			\frac{2}{\sqrt{\epsilon}} \ma I
		\right),
	\end{split}
\end{equation}
$\ma I\in \R^{n\times n}$ is the identity matrix
and $\ma T\in \R^{n\times n}$ has ones on the first superdiagonal, first subdiagonal,
and the $(1,1)$ and $(n,n)$ entries, and is zero otherwise.
Here we choose $n=1000$, $\tau=1$, $\rho = 0.1$, $\epsilon = 0.01$,
pick $\ve b$ such that its first two entries are one and the remainder zero,
and set $\ve c = \ve b$.

\subsection{Setup\label{sec:results:setup}}
The performance of each algorithm strongly depends
on both the choice of the initialization 
and the termination conditions.
Here we describe how we construct comparable  
conditions for each algorithm.

\subsubsection{Initialization\label{sec:results:setup:init}}
For Projected $\Htwo$, IRKA, and TF-IRKA
each requires an initial set of interpolation points.
Following standard practice for IRKA,
we choose these interpolation points to be the rightmost $r$ poles of $H$.
For state-space systems, such as the ISS 1R example,
these are easily computed from the eigenvalues of $\ma A$
using an iterative Krylov solver like ARPACK~\cite{LSY98}.\linelabel{line:arpack}
For the delay example, we computed the poles of $H$
by maximizing $|H(z)|$ starting from pole estimates. 

For IRKA and TF-IRKA these $r$ poles provide sufficient
data to construct rational interpolants
using both transfer function evaluations $H(\mu_j)$ and derivatives $H'(\mu_j)$.
However this same initialization yields an underdetermined 
rational approximation problem in Projected $\Htwo$ 
as our approach only uses evaluations.
Hence, our Projected $\Htwo$ approach constructs
lower dimensional reduced order models until sufficient data 
has been accumulated to yield an overdetermined problem.
Note that for the $r=2$, we do not have sufficient data for an even 
reduced order model and hence use the four rightmost poles of $H$
in this case only.

QuadVF does not need initial shifts,
instead taking fixed samples of $H$ along the imaginary axis.
However an initialization is still needed for the vector fitting iteration;
here we use the poles from AAA as in our Projected $\Htwo$ approach.
We choose scaling parameter $L=10$
and set the number of quadrature points to approximately equal the number of
evaluations of $H$ used by our Projected $\Htwo$ approach at $r=50$. 
With $L=10$ and $N=100$ the quadrature nodes with positive imaginary part
are in $[7.8\times 10^{-2}, 6.4\times 10^2]\iu$
which covers the active region of the Bode plot.\linelabel{line:Lchoice}

\subsubsection{Termination Condition}
As each algorithm is derived from different principles,
each has different natural termination conditions.
To provide the same termination condition for each algorithm, 
we terminate when the difference between successive iterates $H_r^\ell$ and $H_r^{\ell+1}$ is sufficiently small;
namely,  
\begin{equation}
	\| H_r^\ell - H_r^{\ell+1}\|_\Htwo \le 10^{-9}.
\end{equation}
This small tolerance is necessary to avoid premature termination
and is easily computed in $\order(r^3)$ operations via~\cref{eq:H2_gramian}.\linelabel{line:xtol}

\begin{figure}
\pgfplotstableread{data/fig_iss_ph2.dat}{\phtwo}
\pgfplotstableread{data/fig_iss_irka.dat}{\irka}
\pgfplotstableread{data/fig_iss_tfirka.dat}{\tfirka}
\centering
\begin{tikzpicture}
\begin{groupplot}[
		group style = {group size = 1 by 4, vertical sep = 4em},
		width = 0.975\textwidth,
		height = 0.36\textwidth,
		title style={yshift = -7pt,},
	]
	\nextgroupplot[
			xmin = 1e-1, xmax = 1e3,
			xlabel = {frequency, $\omega$},
			ylabel = {$|H(i\omega)|$},
			ymax = 1,
			ymin = 1e-8,
			ytickten = {-8,-7,...,0},
			legend style = {
				at = {(1, 1)},
				anchor = north east,
				draw = none,
				fill = none,
				xshift = -5pt,
				yshift = -5pt,
			},
			title = {International Space Station, 1R Component, 1st input / 1st output},
			xmode = log, ymode = log,
		]
		\addplot[black, thick] 
			table [x=z, y=Hz] {data/fig_iss_ph2_bode_28.dat}
			node[pos=0, anchor=south west, yshift=0pt] {$H$};
		
		\addplot[colorbrewerA3, thick,densely dashed]
			 table [x=z, y=diff]{data/fig_iss_tfirka_bode_28.dat}
			node[pos=0, anchor=south west, yshift=-2pt] {TF-IRKA};
		\addplot[colorbrewerA2, thick, densely dashdotted]
			 table [x=z, y=diff]{data/fig_iss_irka_bode_28.dat}
			node[pos=0, anchor=south west, yshift=-2pt] {IRKA};
		\addplot[colorbrewerA1, thick, densely dotted]
			table [x=z, y=diff]{data/fig_iss_ph2_bode_28.dat}
			node[pos=0.00, anchor=west, yshift=-2pt, xshift=30pt] {Projected $\Htwo$};

	\nextgroupplot[
			xtick = {2,4,...,50},
			xmin = 1, xmax = 51.5,
			ymin = 1e-4, ymax = 1,
			ytickten = {-4,-3,...,0},
			xlabel = {ROM dimension $r$},
			ylabel = {relative $\Htwo$ error},
			legend style = {
				at = {(1, 1)},
				anchor = north east,
				draw = none,
				fill = none,
				xshift = -2pt,
				yshift = -2pt,
			},
			legend columns = 1,
			legend cell align = left,
			ymode = log,
			title = Reduced Order Model Error at Termination,
			clip = false,
		]
		\addplot[colorbrewerA1, mark=*, only marks, thick, mark size = 1.2] 
				table [x=rom_dim, y=rel_err] {data/fig_iss_ph2.dat};
		\addlegendentry{Projected $\Htwo$};
		\addplot[colorbrewerA2, mark=Mercedes star flipped, only marks, thick, mark size = 3] 
			table [x=rom_dim, y=rel_err] {data/fig_iss_irka.dat};
		\addlegendentry{IRKA};
		\addplot[colorbrewerA3, mark=Mercedes star, only marks, thick, mark size = 3] 
			table [x=rom_dim, y=rel_err] {data/fig_iss_tfirka.dat};
		\addlegendentry{TF-IRKA};

		\addplot[colorbrewerA4, mark=o, only marks, thick, mark size = 2] 
			table [x=rom_dim, y=rel_err] {data/fig_iss_quadvf.dat};
		\addlegendentry{QuadVF};

		\addplot[colorbrewerA1, mark=*, only marks, thick, mark size = 1.2] 
				table [x=rom_dim, y=rel_err] {data/fig_iss_ph2.dat};

	\nextgroupplot[
		xmin = 1, xmax = 51.5,
		ymin = 1, ymax = 1e4,
		xtick = {2,4,...,50},
		xlabel = {ROM dimension $r$},
		ymode = log,
		ytickten = {0,1,2,3,4},
		yticklabels = {$^{\phantom{-}}10^0$,
			$^{\phantom{-}}10^1$,
			$^{\phantom{-}}10^2$,
			$^{\phantom{-}}10^3$,
			$^{\phantom{-}}10^4$,
		}, 
		ylabel = {linear solves},
		title = Cost to Satisfy Termination Criteria, 
	]
		\addplot[colorbrewerA2, mark=Mercedes star flipped, only marks, thick, mark size = 3] 
			table [x=rom_dim, y=fom_evals] {data/fig_iss_irka.dat};

		\addplot[colorbrewerA3, mark=Mercedes star, only marks, thick, mark size = 3] 
			table [x=rom_dim, y=fom_evals] {data/fig_iss_tfirka.dat};
		
		\addplot[colorbrewerA4, mark=o, only marks, thick, mark size = 2] 
			table [x=rom_dim, y=fom_evals] {data/fig_iss_quadvf.dat};

		\addplot[colorbrewerA1, mark=*, only marks, thick, mark size = 1.2] 
				table [x=rom_dim, y=fom_evals] {data/fig_iss_ph2.dat};

	\nextgroupplot[
			xmin = 0, xmax = 5e2,
			ymin = 1e-3, ymax = 1,
			ymode = log,
			ytickten = {-3,-2,...,0},
			xlabel = {evaluations of $H(z)$, $H'(z)$, and linear solves},
			ylabel = {relative $\Htwo$ error},
			title = {Convergence History $r=28$},
		]
		
		\addplot[colorbrewerA4, const plot mark left, thick] 
			table [x=N, y=rel_err] {data/fig_iss_quadvf_28_L10.dat}
			node [pos=0.5, anchor = north east, yshift=-2pt] {{QuadVF $L=10$}};

		\addplot[colorbrewerA2, const plot mark left, thick] 
			table [x=fom_evals, y=rel_err] {data/fig_iss_irka_28.dat}
			node [pos=1] {$\bullet$}
			node [pos=1, anchor = south east, yshift = 0pt] {IRKA};

		\addplot[colorbrewerA3, const plot mark left, thick] 
			table [x=fom_evals, y=rel_err] {data/fig_iss_tfirka_28.dat}
			node [pos=1] {$\bullet$}
			node [pos=1, anchor = south east] {TF-IRKA};
		
		\addplot[colorbrewerA1, const plot mark left, thick, ] 
			table [x=fom_evals, y=rel_err] {data/fig_iss_ph2_28.dat}
			node [pos=1] {$\bullet$}
			node[pos=1, anchor = west, yshift=-2pt,xshift=0pt] {Projected $\Htwo$};
\end{groupplot}
\end{tikzpicture}
\vspace{-1em}
\caption{A comparison of model reduction techniques applied the 1R component of the International Space Station 
described in~\cite[subsec.~2.11]{CD02}.
The top plot shows the modulus of the transfer function along the imaginary axis,
with the broken lines showing the value of the error system $H- H_r$ for different techniques at $r=28$.
The second plot shows the relative $\Htwo$ error for each method for a variety of reduced order model dimensions
and the table below shows the number of linear solves, or equivalently, evaluations of $H(z)$ and $H'(z)$ required.
The bottom plot shows the convergence history of each of these methods 
along with a comparison to QuadVF.
}
\label{fig:iss}
\end{figure}
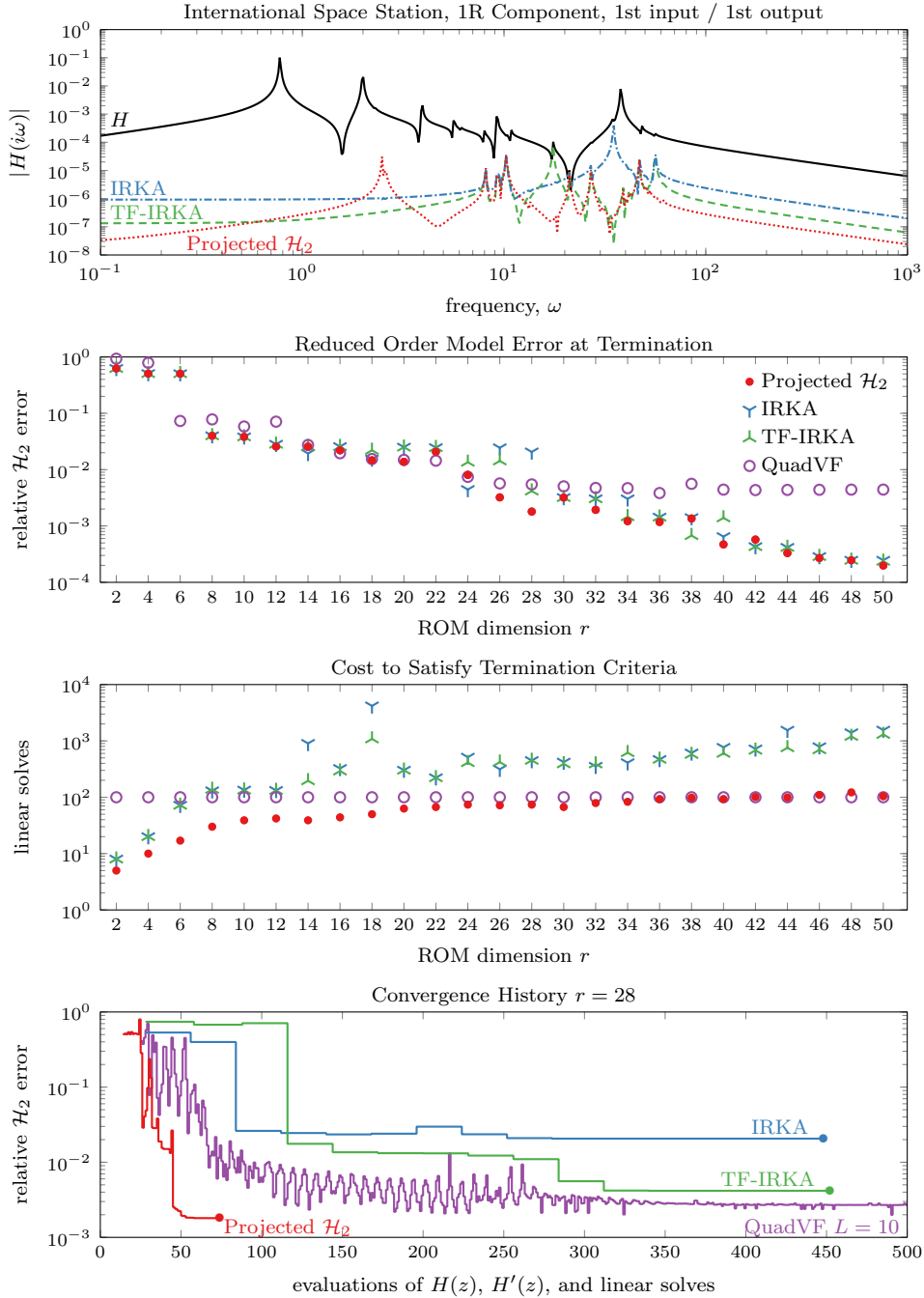

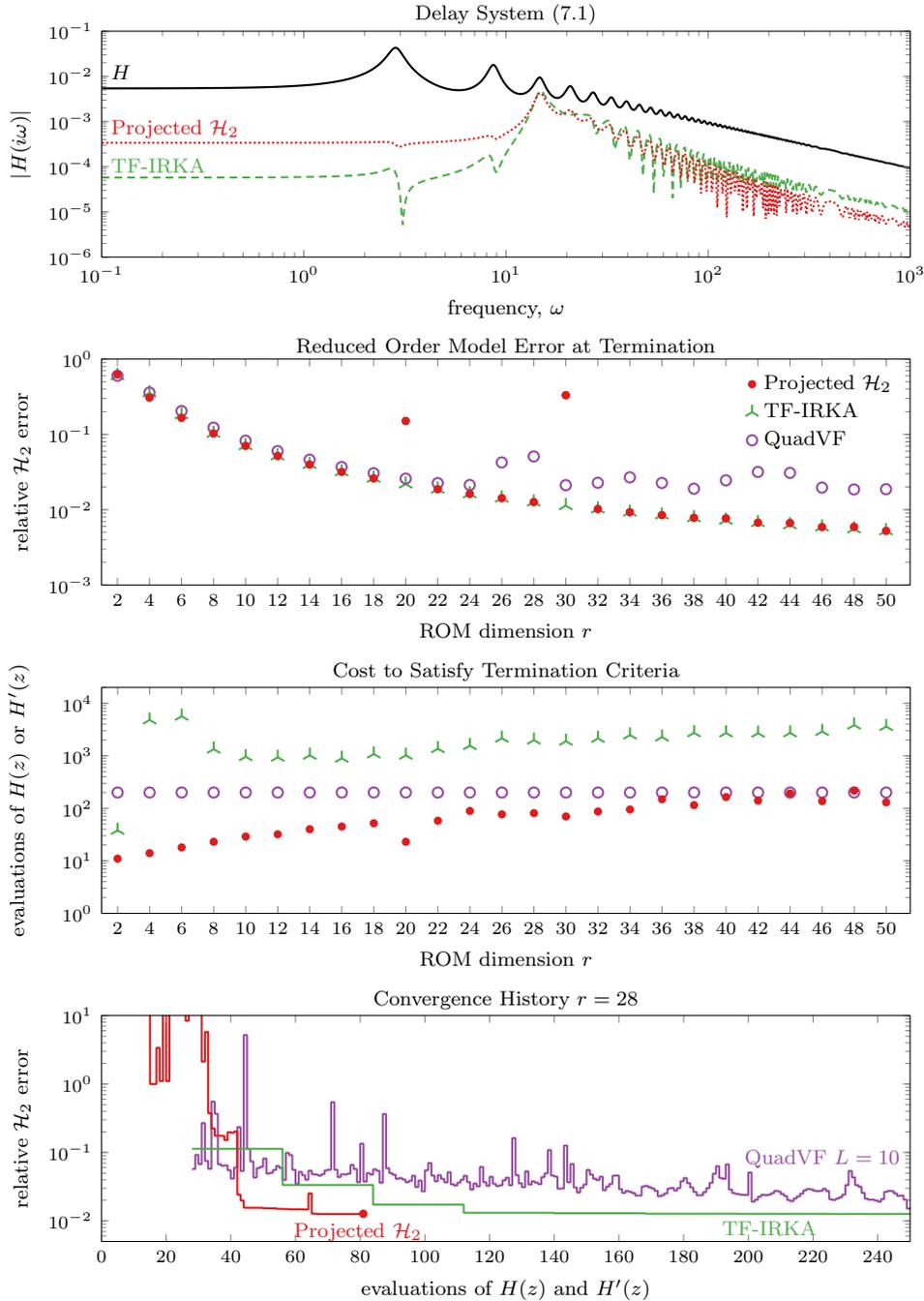
\begin{figure}
\pgfplotstableread{data/fig_delay_ph2.dat}{\phtwo}
\pgfplotstableread{data/fig_delay_tfirka.dat}{\tfirka}
\centering
\begin{tikzpicture}
\begin{groupplot}[
		group style = {group size = 1 by 4, vertical sep = 4em},
		width = 0.975\textwidth,
		height = 0.36\textwidth,
		title style={yshift = -7pt,},
	]
	\nextgroupplot[
			xmin = 1e-1, xmax = 1e3,
			xlabel = {frequency, $\omega$},
			ylabel = {$|H(i\omega)|$},
			ymax = 1e-1,
			ymin = 1e-6,
			ytickten = {-8,-7,...,0},
			legend style = {
				at = {(1, 1)},
				anchor = north east,
				draw = none,
				fill = none,
				xshift = -5pt,
				yshift = -5pt,
			},
			title = {Delay System~\cref{eq:delay}},
			xmode = log, ymode = log,
		]
		\addplot[black, thick] 
			table [x=z, y=Hz] {data/fig_delay_ph2_bode_08.dat}
			node[pos=0, anchor=south west, yshift=0pt] {$H$};
		
		\addplot[colorbrewerA3, thick,densely dashed]
			 table [x=z, y=diff]{data/fig_delay_tfirka_bode_06.dat}
			node[pos=0, anchor=south west, yshift=-2pt] {TF-IRKA};
		\addplot[colorbrewerA1, thick, densely dotted]
			table [x=z, y=diff]{data/fig_delay_ph2_bode_06.dat}
			node[pos=0, anchor=south west, yshift=-2pt, xshift=0pt] {Projected $\Htwo$};
	
	\nextgroupplot[
			xtick = {2,4,...,50},
			xmin = 1, xmax = 51.5,
			ymin = 1e-3, ymax = 1,
			ytickten = {-4,-3,...,0},
			xlabel = {ROM dimension $r$},
			ylabel = {relative $\Htwo$ error},
			legend style = {
				at = {(1, 1)},
				anchor = north east,
				draw = none,
				fill = none,
				xshift = -2pt,
				yshift = -2pt,
			},
			legend columns = 1,
			legend cell align = left,
			ymode = log,
			title = Reduced Order Model Error at Termination,
			clip = true,
		]
		\addplot[colorbrewerA1, mark=*, only marks, thick, mark size = 1.2] 
				table [x=rom_dim, y=rel_err] {data/fig_delay_ph2.dat};
		\addlegendentry{Projected $\Htwo$};

		\addplot[colorbrewerA3, mark=Mercedes star, only marks, thick, mark size = 3] 
			table [x=rom_dim, y=rel_err] {data/fig_delay_tfirka.dat};
		\addlegendentry{TF-IRKA};

		\addplot[colorbrewerA4, mark=o, only marks, thick, mark size = 2] 
			table [x=rom_dim, y=rel_err] {data/fig_delay_quadvf.dat};
		\addlegendentry{QuadVF};

		\addplot[colorbrewerA1, mark=*, only marks, thick, mark size = 1.2] 
				table [x=rom_dim, y=rel_err] {data/fig_delay_ph2.dat};

	\nextgroupplot[
		xmin = 1, xmax = 51.5,
		ymin = 1, ymax = 2e4,
		xtick = {2,4,...,50},
		xlabel = {ROM dimension $r$},
		ymode = log,
		ytickten = {0,1,2,3,4,5},
		yticklabels = {$^{\phantom{-}}10^0$,
			$^{\phantom{-}}10^1$,
			$^{\phantom{-}}10^2$,
			$^{\phantom{-}}10^3$,
			$^{\phantom{-}}10^4$,
			$^{\phantom{-}}10^5$,
		}, 
		ylabel = {evaluations of $H(z)$ or $H'(z)$},
		title = Cost to Satisfy Termination Criteria, 
	]
		\addplot[colorbrewerA3, mark=Mercedes star, only marks, thick, mark size = 3] 
			table [x=rom_dim, y=fom_evals] {data/fig_delay_tfirka.dat};
		
		\addplot[colorbrewerA4, mark=o, only marks, thick, mark size = 2] 
			table [x=rom_dim, y=fom_evals] {data/fig_delay_quadvf.dat};

		\addplot[colorbrewerA1, mark=*, only marks, thick, mark size = 1.2] 
				table [x=rom_dim, y=fom_evals] {data/fig_delay_ph2.dat};

	\nextgroupplot[
			xmin = 0, xmax = 2.5e2,
			ymin = 5e-3, ymax = 10e0,
			ymode = log,
			ytickten = {-3,-2,...,0,1},
			xlabel = {evaluations of $H(z)$ and $H'(z)$},
			ylabel = {relative $\Htwo$ error},
			title = {Convergence History $r=28$},
		]
		
		\addplot[colorbrewerA4, const plot mark left, thick] 
			table [x=fom_evals, y=rel_err] {data/fig_delay_quadvf_hist_28.dat}
			node [pos=0.5, anchor = south east, yshift=12pt] {{QuadVF $L=10$}};

		\addplot[colorbrewerA3, const plot mark left, thick] 
			table [x=fom_evals, y=rel_err] {data/fig_delay_tfirka_hist_28.dat}
			node [pos=1] {$\bullet$}
			node [pos=0.1, anchor = north east, yshift=1pt] {TF-IRKA};

		\addplot[colorbrewerA1, const plot mark left, thick, ] 
			table [x=fom_evals, y=rel_err] {data/fig_delay_ph2_hist_28.dat}
			node [pos=1] {$\bullet$}
			node[pos=1, anchor = north, yshift=2pt,xshift=0pt] {Projected $\Htwo$};
\end{groupplot}
\end{tikzpicture}
\vspace{-1em}
\caption{A comparison of model reduction techniques applied the 
delay system given in~\cref{eq:delay}.
The top plot shows the modulus of the transfer function along the imaginary axis,
with the broken lines showing the value of the error system $H- H_r$ for different techniques at $r=6$.
The second plot shows the relative $\Htwo$ error for each method for a variety of reduced order model dimensions
and the table below shows the number of evaluations of $H(z)$ and $H'(z)$ required.
The bottom plot shows the convergence history of each of these methods.
Here we approximate $\Htwo$-norm approximately using a Boyd/Clenshaw-Curtis quadrature rule~\cref{eq:H2_bcc}
with $10^4$ quadrature points.
}
\label{fig:delay}
\end{figure}

\subsection{Discussion}
Figures~\ref{fig:iss} and \ref{fig:delay}
illustrate the performance of several model reduction algorithms
on the space station and delay models.

\subsubsection{\PHtwo\ Performance}
As these experiments illustrate,
our Projected $\Htwo$ algorithm (\PHtwo)  often converges to the similar or a better local minimizer
when compared to IRKA, TF-IRKA, and QuadVF.
Moreover, our algorithm always uses fewer evaluations of the transfer function
than IRKA and TF-IRKA, often by an order of magnitude.
This is not an artifact of the tight termination criteria.
As the convergence histories in both examples indicate,
the \PHtwo\  converges before both IRKA and TF-IRKA
have taken their third step.
This improved performance is the result of \PHtwo\ 
recycling evaluations of the transfer function between steps
whereas IRKA and TF-IRKA must discard previous transfer function evaluations.

\subsubsection{Non-monotonicity in Degree}
We expect that $\|H - H_r\|_\Htwo$ should decrease monotonically with increasing degree $r$
as $\set R_r^+(\R) \subset \set R_{r+1}^+(\R)$.
This is not necessarily true in our experiments
as we can only recover local minimizers---not the global minimizer---%
since the $\Htwo$-model reduction problem is nonconvex.

\subsubsection{Non-monotonicity of Iterates}
These examples also illustrate 
that the error $\| H - \widehat H_r^\ell\|_\Htwo$
does not monotonically decrease with iteration $\ell$ 
(note evaluations of $H$ effectively count iterations).
In \PHtwo\ this is unsurprising.
Each $\widehat H_r^\ell$ is locally optimal with respect to 
the projection $P(\ve \mu^\ell)$ of the $\Htwo$-norm, not the full $\Htwo$-norm.\linelabel{line:projection_err}
In early iterations, the projected norm is a poor approximation
of the full norm, leading to inaccurate approximations.
This is particularly evident in the delay example.

\subsubsection{Oscillation in QuadVF\label{sec:results:discussion:oscillation}}
Both examples show oscillation in
the QuadVF $\Htwo$ error as we increase the number of quadrature points for a fixed degree
as seen in the bottom plot.
This is a result of the quadrature rule used to approximate the $\Htwo$ norm.
When these quadrature nodes are nearby the peaks in the Bode plot
we obtain a better fit than when they are far away.
Consequently with a large number of quadrature points
we are able to find a good approximation;
however, prior to that
the approximation quality will depend on 
the location of the quadrature nodes~\cref{eq:H2_bcc}.

\subsubsection{Spurious Poles}
Finally, we note that \PHtwo\ sometimes fails to eliminate spurious poles
leading to a larger mismatch than expected.
This occurs in the delay example at $r=20$ and $r=30$
and illustrates that while the heuristic for removing spurious poles
described in~\cref{sec:outer:algorithm:spurious} often succeeds, 
it is not foolproof.


\section{Conclusion}
We have developed the \emph{Projected $\Htwo$} approach
for $\Htwo$-optimal model reduction problem
by applying the projected nonlinear least squares framework to this problem.
This allows the $\Htwo$ problem to be converted
into a sequence of finite-dimensional rational approximation problems.
Although solving these rational approximation problems is more challenging
and computationally expensive than constructing rational interpolants as in IRKA and TF-IRKA,
this cost is justified by requiring far fewer expensive evaluations of the full order model.

\appendix
\section{\label{sec:appendix}\nopunct} 
We now provide the proofs for \cref{thm:angle} and \cref{thm:conv}.
A key component in both proofs is the following lemma.
\begin{lemma}
	\label{lem:quadprog}
	Let  $f:\C^m \times \C^n \to \R$ be defined as 
	\begin{equation}
		f(\ve x, \ve y) \coloneqq
		\begin{bmatrix}
		\ve x	\\ \ve y
		\end{bmatrix}^* 
		\begin{bmatrix}
			\ma A & \ma B \\ \ma B^* & \ma C 
		\end{bmatrix}
		\begin{bmatrix}
			\ve x \\ \ve y 
		\end{bmatrix}
		\quad \text{where} \quad
		\ma A = \ma A^*, \ma C = \ma C^*,
	\end{equation}
	and $\ma A$ is positive definite,
	then 
	\begin{equation}\label{eq:quadprog}
		\min_{\ve x\in \C^m} f(\ve x, \ve y) =
			\ve y^*[\ma C - \ma B^*\ma A^{-1} \ma B] \ve y.
	\end{equation}
\end{lemma}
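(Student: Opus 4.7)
The plan is to use the classical completing-the-square technique for quadratic forms, taking advantage of the fact that $\ma A$ is positive definite and hence invertible.

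First I would expand $f$ into its four block-bilinear pieces:
\begin{equation}
f(\ve x, \ve y) = \ve x^* \ma A \ve x + \ve x^* \ma B \ve y + \ve y^* \ma B^* \ve x + \ve y^* \ma C \ve y.
\end{equation}
Then, for each fixed $\ve y$, I would identify the $\ve x$-dependent piece and rewrite it by completing the square about the shifted variable $\ve x + \ma A^{-1} \ma B \ve y$. Since $\ma A$ is Hermitian positive definite this manipulation is justified, giving the identity
\begin{equation}
f(\ve x, \ve y) = (\ve x + \ma A^{-1} \ma B \ve y)^* \ma A (\ve x + \ma A^{-1} \ma B \ve y) + \ve y^*[\ma C - \ma B^* \ma A^{-1} \ma B] \ve y.
\end{equation}

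Because $\ma A$ is positive definite, the first term is nonnegative and vanishes precisely when $\ve x = -\ma A^{-1} \ma B \ve y$. Taking the infimum over $\ve x\in\C^m$ therefore yields the Schur complement $\ve y^*[\ma C - \ma B^*\ma A^{-1}\ma B]\ve y$, which is exactly \cref{eq:quadprog}, and the minimum is attained.

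There is essentially no obstacle here: the result is a one-line consequence of completing the square, and the only hypothesis that must be invoked nontrivially is positive definiteness of $\ma A$, which ensures both that $\ma A^{-1}$ exists and that the minimum over $\ve x$ is attained (as opposed to being $-\infty$). An equivalent derivation using first-order optimality—setting the Wirtinger derivative $\partial f/\partial \ve x^* = \ma A \ve x + \ma B \ve y$ to zero and substituting—would give the same answer, but the completing-the-square form has the virtue of making the minimum value manifest without a separate second-order check.
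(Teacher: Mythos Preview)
Your proof is correct. The paper takes the Wirtinger-calculus route you mention at the end: it computes $\partial f/\partial \ve x = \ve x^*\ma A + \ve y^*\ma B^*$ and $\partial f/\partial \ove x$, sets both to zero to obtain $\hve x = -\ma A^{-1}\ma B\ve y$, and then appeals to convexity of $f$ in $\ve x$ (since $\ma A$ is positive definite) to conclude this critical point is the global minimizer. Your completing-the-square argument is more elementary and, as you note, has the advantage that the minimum value and its attainment are read off directly from the identity $f(\ve x,\ve y) = (\ve x + \ma A^{-1}\ma B\ve y)^*\ma A(\ve x + \ma A^{-1}\ma B\ve y) + \ve y^*[\ma C - \ma B^*\ma A^{-1}\ma B]\ve y$ without a separate first-order/second-order analysis; the paper's approach trades this for a slightly more mechanical derivative computation.
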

\begin{proof}
	We use Wirtinger calculus (see, e.g.,~\cite[App.~A]{SS10})
	that treats $\ve x \in \C^m$ and its conjugate $\ove x$
	as independent variables whose partial derivatives are
	\begin{align}
		\label{eq:wirtinger}
		\frac{\partial \ve x}{\partial \ve x} &= \ma I, &
		\frac{\partial \ve x}{\partial \ove x} &= \ma 0, &
		\frac{\partial \ove x}{\partial \ve x} &= \ma 0, &
		\frac{\partial \ove x}{\partial \ove x} &= \ma I.
	\end{align}
	Hence the first derivatives of $f$ with respect to $\ve x$ and $\ove x$ are :
	\begin{align}
		\frac{\partial f}{\partial \ve x} &=  \ve x^*\ma A + \ve y^* \ma B^* &
		\frac{\partial f}{\partial \ove x} &= \ve x^\trans \ma A^* + \ve y^\trans \ma B^\trans. 
	\end{align}
	Setting $\ve x$ to be $\hve x = -\ma A^{-1}\ma B\ve y$
	makes both derivatives zero;
	hence $\hve x$ satisfies the first order necessary conditions.
	As $f$ is convex in $\ve x$ as $\ma A$ is positive definite, 
	the local minimizer $\hve x$ is the global minimizer.
\end{proof}

\subsection{Proof of~\cref{thm:angle}}
We split this proof into two lemmas that bound
terms that later appear in the proof of \cref{thm:angle}.

\begin{lemma}\label{lem:p1}
	Suppose $\lambda \in \C_-$ and $\mu_1 \in \C_+$
	where $|\lambda + \conj\mu_1| \le \epsilon$.
	Then there exists a constant $C_1>0$ independent of $\epsilon$ such that
	\begin{equation}
		\min_{x_1 \in \C} 
		\| v[-\conj{\lambda}] z_1 + v[\mu_1] x_1\|_\Htwo
		\le |z_1| C_1 \epsilon.
	\end{equation}
\end{lemma}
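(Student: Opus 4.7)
My plan is to bound the minimum from above by evaluating the objective at a specific, suboptimal choice of $x_1$, since any such value yields an upper bound on the minimum. The natural choice is $x_1 = -z_1$, which forces the two kernel vectors to cancel to leading order. With this choice the quantity to control becomes $|z_1|\cdot\|v[-\conj\lambda] - v[\mu_1]\|_\Htwo$, so it suffices to prove $\|v[-\conj\lambda] - v[\mu_1]\|_\Htwo \le C_1\,\epsilon$ for a constant depending only on $\lambda$.

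The key algebraic step is to rewrite both kernels so that the perturbation is exposed linearly. Setting $\delta \coloneqq \lambda + \conj{\mu_1}$ we have $|\delta|\le\epsilon$ by hypothesis and $\conj{\mu_1} = \delta - \lambda$, so that $v[\mu_1](z) = (z - \lambda + \delta)^{-1}$ while $v[-\conj\lambda](z) = (z-\lambda)^{-1}$. Subtracting and combining over a common denominator gives the pointwise identity
\begin{equation}
v[-\conj\lambda](z) - v[\mu_1](z) = \frac{\delta}{(z-\lambda)(z-\lambda+\delta)},
\end{equation}
and integrating the modulus squared along the imaginary axis yields
\begin{equation}
\|v[-\conj\lambda] - v[\mu_1]\|_\Htwo^2 = \frac{|\delta|^2}{2\pi}\int_{-\infty}^\infty \frac{d\omega}{|\iu\omega - \lambda|^2\,|\iu\omega - \lambda + \delta|^2}.
\end{equation}

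The last step is to bound this integral uniformly. In the regime $\epsilon \le |\Re\lambda|/2$ the shifted pole $\lambda-\delta$ still lies in $\C_-$ with $|\Re(\lambda-\delta)| \ge |\Re\lambda|/2$, so the crude bounds $|\iu\omega-\lambda|^2 \ge |\Re\lambda|^2$ and $|\iu\omega-\lambda+\delta|^2 \ge (|\Re\lambda|/2)^2 + (\omega - \Im\lambda + \Im\delta)^2$ reduce the $\omega$ integral to a Lorentzian equal to $2\pi/|\Re\lambda|$; this delivers $\|v[-\conj\lambda] - v[\mu_1]\|_\Htwo \le |\delta|/|\Re\lambda|^{3/2} \le \epsilon/|\Re\lambda|^{3/2}$. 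In the complementary regime $\epsilon > |\Re\lambda|/2$ the triangle inequality together with the explicit values $\|v[-\conj\lambda]\|_\Htwo^2 = 1/(2|\Re\lambda|)$ and $\|v[\mu_1]\|_\Htwo^2 = 1/(2\Re\mu_1)$ provides a $\lambda$-dependent bound that is absorbed into a larger $C_1$, since $\epsilon$ is then bounded below in terms of $\lambda$.

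I do not anticipate a deep obstacle; the only subtlety is being careful that $C_1$ is independent of $\epsilon$ while being allowed to depend on $\lambda$. This dependence is unavoidable, as $C_1$ must blow up as $\Re\lambda\to 0$ (the kernel $v[\mu]$ itself blows up when $\mu$ approaches the imaginary axis), and this is consistent with the lemma's statement.
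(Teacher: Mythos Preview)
Your approach is genuinely different from the paper's and is essentially sound, but the complementary regime contains a gap.  The paper computes the minimum exactly by writing the squared objective as a $2\times 2$ Hermitian form and eliminating $x_1$ via a Schur complement (their \cref{lem:quadprog}); this yields the closed form
\[
	\min_{x_1\in\C}\|v[-\conj\lambda]z_1+v[\mu_1]x_1\|_\Htwo^2
	= |z_1|^2\,\frac{|\mu_1+\conj\lambda|^2}{2\,|\mu_1-\lambda|^2\,|\Re\lambda|},
\]
after which the single geometric bound $|\mu_1-\lambda|\ge|\Re\lambda|$ (valid for all $\mu_1\in\C_+$) gives $C_1^2=1/(2|\Re\lambda|^3)$ with no case split.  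Your route---plugging in a suboptimal $x_1$ and bounding the resulting kernel difference by direct integration---is more elementary and avoids the Schur-complement lemma, at the price of a case analysis and a slightly larger constant.

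The gap is in your second case.  When $\epsilon>|\Re\lambda|/2$ you invoke the triangle inequality and the values $\|v[-\conj\lambda]\|_\Htwo^2=1/(2|\Re\lambda|)$ and $\|v[\mu_1]\|_\Htwo^2=1/(2\Re\mu_1)$, asserting this is a ``$\lambda$-dependent bound''.  It is not: $\Re\mu_1$ can be arbitrarily small even under the constraint $|\lambda+\conj\mu_1|\le\epsilon$ once $\epsilon$ exceeds $|\Re\lambda|$ (take $\lambda=-1$, $\epsilon=2$, $\mu_1=10^{-3}$), so $\|v[\mu_1]\|_\Htwo$ is unbounded and no $\mu_1$-independent $C_1$ emerges from that inequality.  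The fix is trivial: in this regime choose $x_1=0$ instead of $x_1=-z_1$.  Then the objective is simply $|z_1|\,\|v[-\conj\lambda]\|_\Htwo=|z_1|/\sqrt{2|\Re\lambda|}$, and since $\epsilon>|\Re\lambda|/2$ this is at most $|z_1|\cdot\sqrt{2}\,\epsilon/|\Re\lambda|^{3/2}$, which depends only on $\lambda$.  With this repair your argument goes through; the paper's exact-minimization route is cleaner precisely because the optimal $x_1$ automatically suppresses the $\|v[\mu_1]\|_\Htwo$ contribution that your choice $x_1=-z_1$ leaves exposed.
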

\begin{proof}
Examining the squared objective
\begin{equation}
	\| v[-\conj{\lambda}] z_1 + v[\mu_1] x_1\|_\Htwo^2
	=
	\begin{bmatrix}
		x_1 \\ z_1 
	\end{bmatrix}^*
	\begin{bmatrix}
		(\mu_1 + \conj\mu_1)^{-1} & (\mu_1 - \lambda)^{-1} \\
		(-\conj \lambda + \conj\mu_1 )^{-1} & (-\conj\lambda -\lambda)^{-1} 
	\end{bmatrix}
	\begin{bmatrix}
		x_1 \\ z_1 
	\end{bmatrix}, 
\end{equation}
we note that we can apply \cref{lem:quadprog} 
as $(\mu_1 +\conj\mu_1)^{-1}$ is positive.
After simplification, 
\begin{equation}\label{eq:bound_p1}
	\begin{split}
	\min_{x_1 \in \C} 
		\| v[-\conj{\lambda}] z_1 + v[\mu_1] x_1\|_\Htwo^2
		&= |z_1|^2 \frac{ |\mu_1 + \conj\lambda|^2}{2|\mu_1 - \lambda|^2 \Re[-\lambda]}.
	\end{split} 
\end{equation}
Setting $\mu_1 = -\conj\lambda + \delta$ 
where $|\delta| \le \epsilon$ 
and as $\Re \mu_1 > 0$,  $\Re \delta > \Re \lambda$, then
\begin{equation}\label{eq:mu_lam_bnd}
	|\mu_1 -\lambda| = |-\conj\lambda + \delta -\lambda| = |-2\Re \lambda + \delta| 
	\ge | \Re\lambda|.
\end{equation}
Using this bound in the numerator yields the result:
\begin{equation}
	\min_{x_1 \in \C} 
		\| v[-\conj{\lambda}] z_1 + v[\mu_1] x_1\|_\Htwo^2
	\le |z_1|^2 \frac{ |\mu_1 + \conj\lambda|^2}{2|\Re\lambda|^3}
	\le |z_1|^2 C_1^2 \epsilon^2.
\end{equation}
\end{proof}

\begin{lemma}\label{lem:p2}
	Suppose $\lambda \in \C_-$ and $\mu_2,\mu_3 \in \C_+$
	where $|\lambda + \conj\mu_2| \le \epsilon$, 
	$|\lambda + \conj\mu_3| \le \epsilon$, and $\mu_2 \ne \mu_3$.
	Then there exists a constant $C_2> 0$ independent of $\epsilon$ such that
	\begin{equation}
	\min_{x_2, x_3 \in \C}
	\|
		v[-\conj{\lambda}]' z_{2}
		+ v[\mu_{2}] x_{2} 
		+ v[\mu_{3}] x_{3}
	\|_\Htwo \le
		|z_2| C_2 \epsilon.
	\end{equation}
\end{lemma}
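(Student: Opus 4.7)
\medskip

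My plan is to prove the bound constructively rather than by solving the minimization exactly: since the left hand side is a minimum, it suffices to exhibit a specific choice of $x_2, x_3$ that achieves the claimed bound. Motivated by the finite-difference heuristic discussed before \cref{thm:angle}---and by the observation that $v[\mu]$ is antiholomorphic in $\mu$ with $\partial_{\bar\mu} v[\mu] = v[\mu]'$---I will take the divided difference as the approximant, setting
\begin{equation*}
  x_2 = \frac{-z_2}{\bar\mu_2 - \bar\mu_3}, \qquad x_3 = \frac{z_2}{\bar\mu_2 - \bar\mu_3},
\end{equation*}
which is well defined by the hypothesis $\mu_2 \neq \mu_3$. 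This choice is designed so that the leading Taylor terms of $v[\mu_2]$ and $v[\mu_3]$ about $-\bar\lambda$ cancel the $v[-\bar\lambda]' z_2$ term, leaving only a second-order remainder.

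Writing $\delta_j \coloneqq \bar\mu_j + \lambda$ so that $|\delta_j| \le \epsilon$ and $z + \bar\mu_j = z - \lambda + \delta_j$, a short manipulation of partial fractions gives the closed-form residual
\begin{equation*}
  v[-\bar\lambda]' z_2 + v[\mu_2] x_2 + v[\mu_3] x_3
  \;=\; -z_2 \cdot \frac{(z-\lambda)(\delta_2 + \delta_3) + \delta_2\delta_3}{(z-\lambda)^2(z - \lambda + \delta_2)(z - \lambda + \delta_3)}.
\end{equation*}
The crucial feature is that the factor $\bar\mu_2 - \bar\mu_3 = \delta_2 - \delta_3$, which appeared in the denominators of $x_2$ and $x_3$, has cancelled completely; what remains is genuinely first order in $\epsilon$ in the numerator, divided by a quantity that is uniformly bounded away from zero on the imaginary axis.

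It then remains to bound the $\Htwo$-norm of the displayed function. On $z = \iu\omega$ one has $|z-\lambda| \ge |\Re\lambda|$, and (provided $\epsilon \le \tfrac12 |\Re\lambda|$) also $|z-\lambda+\delta_j| \ge \tfrac12|\Re\lambda|$; combined with the numerator bound $|(z-\lambda)(\delta_2+\delta_3) + \delta_2\delta_3| \le 2|z-\lambda|\epsilon + \epsilon^2$, the integrand is pointwise bounded by a constant (depending only on $\Re\lambda$) times $\epsilon^2/|i\omega-\lambda|^2 + \epsilon^4/|i\omega-\lambda|^4$. Both of these are standard Poisson-type integrals evaluating to $\pi/|\Re\lambda|$ and $\pi/(2|\Re\lambda|^3)$, so the $\Htwo$ norm of the residual is at most $|z_2|\, C_2\, \epsilon$ for an explicit $C_2$ depending on $\Re\lambda$.

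The main subtlety---and in my view the only real obstacle---is the regime where $\epsilon$ is not small relative to $|\Re\lambda|$, in which the denominator lower bounds degrade. This can be handled by a trivial fallback: since $\|v[-\bar\lambda]'\|_{\Htwo}$ is a finite constant depending only on $\Re\lambda$, choosing instead $x_2 = x_3 = 0$ gives $\|v[-\bar\lambda]' z_2\|_{\Htwo} \le |z_2|/(2|\Re\lambda|^{3/2})$, which is dominated by $|z_2| C_2' \epsilon$ once $\epsilon \ge \tfrac12|\Re\lambda|$ for a suitable $C_2'$. Taking the larger of the two constants yields a single $C_2$ that is independent of $\epsilon$, completing the proof.
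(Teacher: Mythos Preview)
Your proof is correct and takes a genuinely different route from the paper's. The paper does not exhibit a specific approximant; instead it inserts the additive identity $0 = v[\mu_2]' - v[\mu_2]'$, bounding $\|v[-\conj\lambda]' - v[\mu_2]'\|_\Htwo$ by direct computation and then solving $\min_{x_2,x_3}\|v[\mu_2]'z_2 + v[\mu_2]x_2 + v[\mu_3]x_3\|_\Htwo$ \emph{exactly} via the quadratic-programming identity of \cref{lem:quadprog}, obtaining the closed form $|z_2|^2|\mu_2-\mu_3|^2 / (\Re[2\mu_2]^3|\mu_2+\conj\mu_3|^2)$. Your approach instead picks the divided-difference coefficients $x_j = \pm z_2/(\conj\mu_2-\conj\mu_3)$, cancels $\delta_2-\delta_3$ algebraically, and bounds the residual pointwise on the imaginary axis by Poisson-type integrals.

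What each buys: the paper's route reuses \cref{lem:quadprog} as a unifying device (it reappears in \cref{lem:p1} and \cref{lem:Qangle_full}) and produces exact minimizers, at the cost of an auxiliary lemma and some opaque algebra. Your route is more elementary and self-contained, makes the finite-difference intuition from \cref{sec:projection:fd} precise, and---notably---handles the regime $\epsilon \gtrsim |\Re\lambda|$ explicitly via the fallback $x_2=x_3=0$; the paper's bounds contain factors like $\Re[\mu_2]^3$ in denominators and tacitly treat these as ``independent of $\epsilon$,'' which strictly speaking requires the same small-$\epsilon$ assumption you isolate. Either argument suffices for the use in \cref{thm:angle}.
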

\begin{proof}
Inserting the additive identity $0 = v[\mu_2]' - v[\mu_2]'$ into the objective
and applying the triangle inequality
\begin{multline}
	\label{eq:bound_p2}
	\|
		v[-\conj{\lambda}]' z_{2}
		+ v[\mu_{2}] x_{2} 
		+ v[\mu_{3}] x_{3}
	\|_\Htwo \\ = 
	|z_2| \| v[-\conj\lambda]' -  v[\mu_2]'\|_\Htwo
	+
	\|
		v[\mu_2]' z_{2}
		+ v[\mu_{2}] x_{2} 
		+ v[\mu_{3}] x_{3}
	\|_\Htwo.
\end{multline}
The first term above is
\begin{equation}
	\| v[-\conj\lambda]' -  v[\mu_2]'\|_\Htwo^2
	= 
	\frac{2}{(-\conj\lambda - \lambda)^{3}}
	-\frac{2}{(-\conj\lambda + \conj\mu_2)^{3}}
	-\frac{2}{(\mu_2 -\lambda)^{3}}
	+\frac{2}{(\mu_2 + \conj\mu_2)^{3}}.
\end{equation}
After some simplification and noting $|\lambda - \mu_2| \ge |\Re\lambda|$ following~\cref{eq:mu_lam_bnd},
\begin{equation}
	\label{eq:bound_p2a}
	\| v[-\conj\lambda]' -  v[\mu_2]'\|_\Htwo^2
	 = 2\frac{ 
			|\mu_2 + \conj\lambda|^2 \, p(\mu_2, \lambda, \conj{\mu_2}, \conj{\lambda})
		}{
		|\lambda -\mu_2|^6 \Re[2\mu_2]^3 \Re[-2\lambda]^3
		}
		\le 
	  \frac{ 
			|\mu_2 + \conj\lambda|^2 \, p(\mu_2, \lambda, \conj{\mu_2}, \conj{\lambda})
		}{
		2^5|\Re \lambda|^9 \Re[\mu_2]^3 
		}
		\le C_3^2 \epsilon^2
\end{equation}
where $p(\mu_2, \lambda, \conj{\mu_2}, \conj{\lambda})$ is a degree-7 polynomial
and $C_3> 0$ captures the terms independent of $\epsilon$.
Next, we bound the second term of~\cref{eq:bound_p2};
expanding this term
\begin{multline}
	\|
		v[\mu_{2}] x_{2} 
		+ v[\mu_{3}] x_{3}
		+ v[\mu_2]' z_{2}
	\|_\Htwo^2 \\
	\setlength\arraycolsep{3pt}
	=\begin{bmatrix}
		x_2 \\ x_3 \\ z_2
	\end{bmatrix}^* \!
	\begin{bmatrix}
		(\mu_2 + \conj\mu_2)^{-1} & (\mu_2 + \conj\mu_3)^{-1} & - (\mu_2 + \conj\mu_2)^{-2} \\
		(\mu_3 + \conj\mu_2)^{-1} & (\mu_3 + \conj\mu_3)^{-1} & - (\mu_3 + \conj\mu_2)^{-2} \\
		-(\mu_2 + \conj\mu_2)^{-2} & -(\mu_2 +\conj\mu_3)^{-2} & 2(\conj\mu_2 +\mu_2)^{-3}
	\end{bmatrix} \!
	\begin{bmatrix}
		x_2 \\ x_3 \\ z_2
	\end{bmatrix}.
\end{multline}
As $\mu_2$ and $\mu_3$ are distinct, the upper left $2\times 2$ block is positive definite
and we may apply \cref{lem:quadprog} which yields, after some simplification,
\begin{multline}\label{eq:bound_p2b}	
	\min_{x_2, x_3 \in \C}\|v[-\conj{\lambda}]' z_2 + v[\mu_2] x_2 + v[\mu_3] x_3 \|_\Htwo^2  
	= \frac{ |z_2|^2 |\mu_2 - \mu_3|^2}{\Re[2\mu_2]^3|\mu_2 + \conj\mu_3|^2} 
	\le C_4^2 |z_2|^2 \epsilon^2,
\end{multline}
where $C_4^2$ captures the terms that are independent of $\epsilon$.

Combining \cref{eq:bound_p2a} and \cref{eq:bound_p2b}, we obtain the bound
\begin{equation}
	\min_{x_2, x_3 \in \C}
	\|
		v[-\conj{\lambda}]' z_{2}
		+ v[\mu_{2}] x_{2} 
		+ v[\mu_{3}] x_{3}
	\|_\Htwo \le
		|z_2|\epsilon \sqrt{ C_3^2 + C_4^2}.
\end{equation}
\end{proof}

\begin{proof}[Proof of \cref{thm:angle}]
We begin by rewriting the sine of the subspace angle in terms of 
the unitary basis operator $B_{\set T(\ve\mu)}$ using~\cite[eq.~(13)]{BG73}
\begin{equation}\label{eq:sin_max_norm}
	\begin{split}
	\sin \phi_{\max} (\set T(\ve\lambda), \set V(\ve\mu)) 
	&=\| (I - P(\ve\mu))B_{\set T(\ve\lambda)}B_{\set T(\ve \lambda)}^*\|_\Htwo \\
	&= \max_{\ve y \in \C^{2r}, \|\ve y\|_2=1} 
			\| (I - P(\ve\mu)) B_{\set T(\ve\lambda)} \ve y\|_\Htwo.
	\end{split}
\end{equation}
As $I-P(\ve\mu)$ is an orthogonal projector onto 
the complement of $\set V(\ve\mu)$,
its projection satisfies the closest point property.
This permits an equivalent restatement as
\begin{align}
	\sin \phi_{\max} (\set T(\ve\lambda), \set V(\ve\mu)) 
	&= \max_{\ve y \in \C^{2r}, \|\ve y\|_2=1} 
		\min_{\ve x \in \C^n } 
		\| B_{\set T(\ve\lambda)} \ve y + V(\ve\mu) \ve x\|_\Htwo.
\end{align}
Writing this in terms of the non-orthogonal basis vectors for $\set T(\ve\lambda)$ and $\set V(\ve\mu)$, 
\begin{align}\label{eq:sin_maxmin}
	\sin \phi_{\max} (\set T(\ve\lambda), \set V(\ve\mu)) 
	&\! =\! \! \max_{\substack{\ve z \in \C^{2r} \\ \ve z \ne 0}} 
		\min_{\ve x \in \C^n } 
		\frac{\left\| 
			\setlength\arraycolsep{3pt}
			\begin{bmatrix} V(-\conj{\ve\lambda}) & V'(-\conj{\ve\lambda}) \end{bmatrix} \ve z
			 + V(\ve\mu) \ve x\right
		\|_\Htwo 
	}{\|\hma M(\ve\lambda) \ve z\|_2}.
\end{align}
We now bound the numerator by making a non-optimal choice of $\ve x$.
Denoting the entires of $\ve x$ associated with $\mu_{k,t}$ as $x_{k,t}$
and setting the remainder to zero, we have
\begin{multline}
	\min_{\ve x \in \C^n} 
		\left\| 
			\begin{bmatrix} V(-\conj{\ve\lambda}) & V'(-\conj{\ve\lambda}) \end{bmatrix} \ve z
	 		-V(\ve\mu) \ve x 
		\right\|_\Htwo 
	\\
	\le \! \min_{\substack{ 
			\lbrace x_{k,1} \rbrace_{k=1}^m \subset \C \\ 
			\lbrace x_{k,2} \rbrace_{k=1}^m \subset \C \\ 
			\lbrace x_{k,3} \rbrace_{k=1}^m \subset \C 
		}}
	\left\| \sum_{k=1}^m 
			v[-\conj{\lambda}_k] z_{k,1}
			\!+\!v[-\conj{\lambda}_k]' z_{k,2}
			\!+\! v[\mu_{k,1}] x_{k,1} 
			\!+\! v[\mu_{k,2}] x_{k,2} 
			\!+\! v[\mu_{k,3}] x_{k,3}\right\|_\Htwo\!\!\!.\!\!\!
\end{multline}
Applying the triangle inequality 
by grouping $v[-\conj\lambda_k]$ with $v[\mu_{k,1}]$ 
and $v'[-\conj\lambda_k]$ with $v[\mu_{k,2}]$ and $v[\mu_{k,3}]$
yields the upper bound  
\begin{multline}\label{eq:bound_split}
	\setlength\arraycolsep{2pt}
	\min_{\ve x \in \C^n} 
		\left\| 
			\begin{bmatrix} V(-\conj{\ve\lambda}) & V'(-\conj{\ve\lambda}) \end{bmatrix} \ve z
	 		-V(\ve\mu) \ve x 
		\right\|_\Htwo 
	\le \\ 
	\sum_{k=1}^m 
	\min_{\substack{x_{k,1}\in \C }}
	\| 
		v[-\conj{\lambda}_k] z_{k,1}
			+ v[\mu_{k,1}] x_{k,1}
	\|_\Htwo \\
	 +
	\sum_{k=1}^m
	\min_{\substack{x_{k,2}\in \C \\ x_{k,3} \in \C}}
	\|
		v[-\conj{\lambda}_k]' z_{k,2}
		+ v[\mu_{k,2}] x_{k,2} 
		+ v[\mu_{k,3}] x_{k,3}
	\|_\Htwo.
\end{multline}
In each term, the minimization has been pulled inside the sum
as each of $x_{k,1}$ and $x_{k,2}, x_{k,3}$ appear in only one term.
Invoking \cref{lem:p1} and \cref{lem:p2}
we have
\begin{equation}\label{eq:sum_bnd}
	\min_{\ve x \in \C^n}
		\setlength\arraycolsep{3pt} 
		\left\| 
			\begin{bmatrix} V(-\conj{\ve\lambda}) & V'(-\conj{\ve\lambda}) \end{bmatrix} \ve z
	 		-V(\ve\mu) \ve x 
		\right\|_\Htwo  
	\le \epsilon \sum_{k=1}^m C_{k,1} |z_{k,1}| + C_{k,2} |z_{k,2}| .
\end{equation}
The matrix $\hma M(\ve\lambda)$ is invertible as the entries of $\ve\lambda$ are distinct
which provides the lower bound
$\|\hma M(\ve\lambda) \ve z\|_2 \ge \sigma_{\min}(\hma M(\ve\lambda)) \|\ve z\|_\infty$.
Returning to~\cref{eq:sin_maxmin} with this lower bound and the upper bound in~\cref{eq:sum_bnd},
\begin{equation}
	\sin\phi_{\max}(\set T(\ve\lambda), \set V(\ve\mu))
		\le
		\frac{\epsilon \sum_{k=1}^m C_{k,1} |z_{k,1}| \!+\! C_{k,2} |z_{k,2}|}{
			\sigma_{\min}(\hma M(\ve\lambda)) \|\ve z\|_\infty}
		\le \epsilon \frac{\sum_{k=1}^m C_{k,1} \!+\! C_{k,2}}{\sigma_{\min}(\ma M(\hve \lambda))},
\end{equation}
where the last step follows from $|z_{k,1}|/\|\ve z\|_\infty\le 1$.
\end{proof}

\subsection{Proof of~\cref{thm:conv}}
We begin by establishing a subspace angle result analogous 
to those in \cref{lem:p1} and \cref{lem:p2}.

\begin{lemma}\label{lem:Qangle_full}
	Suppose $\ve\lambda, \ve \xi \in \C^r$ where
	$|\lambda_j - \xi_j| \le \epsilon$.
	Then there exists a constant $C>0$ independent of $\epsilon$ such that
	\begin{equation}
		\sin \phi_{\max} ( \set T(\ve \lambda), \set T(\ve \xi)) \le C \epsilon. 
	\end{equation}
\end{lemma}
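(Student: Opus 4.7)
The plan is to reduce this to two basic facts: (i) for each $j$, the two pairs of basis vectors $v[-\conj{\xi_j}], v[-\conj{\xi_j}]'$ and $v[-\conj{\lambda_j}], v[-\conj{\lambda_j}]'$ differ in $\Htwo$ norm by $O(\epsilon)$, and (ii) any unit-norm element of $\set T(\ve\xi)$ has coefficients in the natural basis that are bounded in terms of $\sigma_{\min}(\hma M(\ve\xi))$. Combining these by a triangle-inequality argument analogous to \cref{lem:p1,lem:p2} will yield the claim.

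First I would rewrite, using the closest-point property as in the opening of the proof of \cref{thm:angle},
\begin{equation}
    \sin\phi_{\max}(\set T(\ve\lambda), \set T(\ve\xi))
    = \max_{\substack{F\in \set T(\ve\xi) \\ \|F\|_\Htwo = 1}} \min_{G \in \set T(\ve\lambda)} \|F - G\|_\Htwo.
\end{equation}
For any such $F$, expand $F = \sum_{j=1}^{r}\alpha_j v[-\conj{\xi_j}] + \beta_j v[-\conj{\xi_j}]'$ and make the non-optimal choice $G = \sum_{j=1}^{r}\alpha_j v[-\conj{\lambda_j}] + \beta_j v[-\conj{\lambda_j}]' \in \set T(\ve\lambda)$. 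The triangle inequality then gives
\begin{equation}
    \|F - G\|_\Htwo \le \sum_{j=1}^{r} |\alpha_j|\,\|v[-\conj{\xi_j}] - v[-\conj{\lambda_j}]\|_\Htwo + |\beta_j|\,\|v[-\conj{\xi_j}]' - v[-\conj{\lambda_j}]'\|_\Htwo.
\end{equation}

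Next I would compute each norm explicitly via the kernel formulas \cref{eq:mhat1}--\cref{eq:mhat3}. For example, $\|v[-\conj{\xi_j}] - v[-\conj{\lambda_j}]\|_\Htwo^2$ expands into a sum of four Cauchy-type expressions whose common denominators vanish at $\xi_j = \lambda_j$ to order two; a short algebraic simplification (clearing denominators and factoring out $\xi_j - \lambda_j$) gives a bound $C_j'\epsilon$ uniformly in $\xi_j,\lambda_j$ lying in a compact subset of $\C_-$. The identical calculation for the derivative kernels—using that $\|v[\mu]' - v[\nu]'\|_\Htwo^2$ factors as $|\mu-\nu|^2$ times a rational quantity bounded in $\Re\mu,\Re\nu$—yields a bound $C_j''\epsilon$ on the second norm.

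Finally I would bound the coefficients $\alpha_j,\beta_j$. Writing $\ve w = [\ve\alpha;\ve\beta]\in \C^{2r}$, the constraint $\|F\|_\Htwo = 1$ gives $\ve w^* \hma M(\ve\xi)\ve w = 1$, so $\|\ve w\|_2 \le \sigma_{\min}(\hma M(\ve\xi))^{-1/2}$, and hence $\sum_j (|\alpha_j|+|\beta_j|) \le \sqrt{2r}\,\sigma_{\min}(\hma M(\ve\xi))^{-1/2}$. Combining with the two $O(\epsilon)$ bounds above produces $\sin\phi_{\max}(\set T(\ve\lambda), \set T(\ve\xi)) \le C\epsilon$ with $C$ independent of $\epsilon$.

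The main obstacle is the dependence on $\sigma_{\min}(\hma M(\ve\xi))$: if $\ve\xi$ has near-repeated entries this can blow up. In the statement, $\ve\lambda$ are the poles of a limiting local minimizer and hence are distinct (as in \cref{thm:conv}); for small enough $\epsilon$ the perturbed $\ve\xi$ are also distinct and $\sigma_{\min}(\hma M(\ve\xi))$ is bounded away from zero by continuity of $\hma M$, so the constant $C$ can be taken uniform. I would make this implicit distinctness assumption explicit at the top of the proof.
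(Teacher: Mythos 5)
Your proposal is correct, and its skeleton matches the paper's: reduce the subspace angle to a max--min over coefficient vectors, pair the $j$th basis functions of $\set T(\ve\xi)$ with those of $\set T(\ve\lambda)$, obtain a per-index $O(\epsilon)$ perturbation bound, and absorb the coefficient normalization into $\sigma_{\min}$ of a Gram matrix. Where you diverge is in how the per-index bound is obtained: the paper chooses the matching coefficients \emph{optimally} within each $4\times 4$ block by invoking \cref{lem:quadprog}, so the block minimum is exactly $\ve z_j^*[\hma M_j - \ma B_j^*\ma C_j^{-1}\ma B_j]\ve z_j$ and the $|\lambda_j-\xi_j|^2$ factor is read off from the explicit Schur complement; you instead make the cruder choice of \emph{identical} coefficients and apply the triangle inequality to $\|v[-\conj{\xi_j}]-v[-\conj{\lambda_j}]\|_\Htwo$ and $\|v[-\conj{\xi_j}]'-v[-\conj{\lambda_j}]'\|_\Htwo$, each of which is indeed $O(\epsilon)$ (the squared norms expand to four Cauchy terms whose leading behavior is $|\xi_j-\lambda_j|^2$ times a quantity controlled by $\Re\lambda_j$, exactly as in \cref{eq:bound_p2a}). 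Your route is more elementary and avoids the symbolic simplification of the Schur complement entries, at the cost of a slightly looser constant and an extra $\sqrt{2r}$ from passing between $\ell_1$ and $\ell_2$ norms of the coefficient vector; the paper's route keeps everything inside the \cref{lem:quadprog} machinery already used in \cref{lem:p1,lem:p2}. Two minor points: you maximize over unit vectors of $\set T(\ve\xi)$ rather than $\set T(\ve\lambda)$, which is harmless since both subspaces have dimension $2r$ so the largest principal angle is symmetric (the paper normalizes by $\sigma_{\min}(\hma M(\ve\lambda))$ instead of $\sigma_{\min}(\hma M(\ve\xi))$); and your closing remark about distinctness is apt --- the paper leaves the same dependence on $\sigma_{\min}(\hma M(\cdot))$ implicit in its constant, and in the only application (\cref{thm:conv}) the limit poles are assumed distinct, so both constants are legitimately uniform for small $\epsilon$.
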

\begin{proof}
As in~\cref{eq:sin_maxmin},
this subspace angle is 
\begin{equation}
	\sin \phi_{\max}( \set T(\ve\lambda), \set T(\ve \xi))
		= 
	\max_{\substack{\ve z \in \C^{2r} \\ \ve z \ne 0}}
	\min_{\ve x\in \C^{2r}}
		\frac{
			\|
				\begin{bmatrix} V(-\ove\lambda) & V'(-\ove\lambda )\end{bmatrix} \ve z + 
				\begin{bmatrix} V(-\ove \xi) & V'(-\ove\xi)\end{bmatrix} \ve x
		 	\|_\Htwo
		}{
			\|\hma M(\ve\lambda)\ve z\|_2
		}.
\end{equation}
Following~\cref{eq:bound_split}, we make a suboptimal choice of $\ve x$, 
pairing those associated with each $\lambda_j$ and $\xi_j$
and indexing the corresponding entries of $\ve x$ and $\ve z$ by 
$x_{j,1}, x_{j,2}$ and $z_{j,1}, z_{j,2}$:
\begin{multline}
	\min_{\ve x\in \C^{2r}}
		\|
			\begin{bmatrix} V(-\ove\lambda) & V'(-\ove\lambda )\end{bmatrix} \ve z + 
			\begin{bmatrix} V(-\ove \xi) & V'(-\ove\xi)\end{bmatrix} \ve x
		\|_\Htwo
	\le \\ 
	\sum_{j=1}^r \min_{x_{j,1},x_{j,2}\in \C} 
		\|
			v[-\conj \lambda_j] z_{j,1} + v[-\conj\lambda_j]' z_{j,2} + 
			 v[-\conj \xi_j] x_{j,1} + v[-\conj\xi_j]' x_{j,2}
		\|_\Htwo.
\end{multline} 
Examining the objective function
\begin{multline}		
	\setlength\tabcolsep{1pt}
	\|v[-\conj\lambda_j]z_{j,1} + v[-\conj\lambda_j]' z_{j,2}
	+ v[-\conj\xi_j]x_{j,1} + v[-\conj\xi_j]' x_{j,2}
	\|_\Htwo^2\\
	\!=\!
	\begin{bmatrix} z_{j,1} \\ z_{j,2} \\ x_{j,1} \\ x_{j,2} \end{bmatrix}^*\!\!
	\begin{bmatrix}
		(-\conj\lambda_j - \lambda_j)^{-1} & -(-\conj\lambda_j - \lambda_j)^{-2} & (-\conj\lambda_j -\xi_j)^{-1} & -(-\conj\lambda_j - \xi_j)^{-2} \\
		-(-\conj\lambda_j - \lambda_j)^{-2} & 2(-\conj\lambda_j - \lambda_j)^{-3} & -(\conj\lambda_j -\xi_j)^{-2} & 2(-\conj\lambda_j - \xi_j)^{-3} \\
		(-\conj\xi_j - \lambda_j)^{-1} & -(-\conj\xi_j - \lambda_j)^{-2} & (-\conj\xi_j -\xi_j)^{-1} & -(-\conj\xi_j - \xi_j)^{-2} \\
		-(-\conj\xi_j - \lambda_j)^{-2} & 2(-\conj\xi_j - \lambda_j)^{-3} & -(\conj\xi_j -\xi_j)^{-2} & 2(-\conj\xi_j - \xi_j)^{-3} \\
	\end{bmatrix}
	\!\!
	\begin{bmatrix} z_{j,1} \\ z_{j,2} \\ x_{j,1} \\ x_{j,2} \end{bmatrix} \\
	= \begin{bmatrix} \ve z_j \\ \ve x_j \end{bmatrix}^*
	\begin{bmatrix}
		\hma M_j & \ma B_j \\ \ma B_j^* & \ma C_j
	\end{bmatrix}
	\begin{bmatrix} \ve z_j \\ \ve x_j \end{bmatrix}.
\end{multline}
As $\hma M_j$ is positive definite, we may invoke \cref{lem:quadprog} to show
\begin{multline}
	\min_{\ve x_j \in \C^2}
	\|v[-\conj\lambda_j]z_{j,1} + v[-\conj\lambda_j]' z_{j,2}
	+ v[-\conj\xi_j]x_{j,1} + v[-\conj\xi_j]' x_{j,2}
	\|_\Htwo^2 \\
	= \ve z_j^*[\hma M_j - \ma B_j^*\ma C_j^{-1}\ma B_j]\ve z_j.
\end{multline}
With this minimizer we bound the squared subspace angle 
using the singular values
\begin{multline}\label{eq:Qangle_bound}
	\sin^2\phi_{\max} (\set T(\ve\lambda), \set T(\ve \xi))
	\le \max_{\substack{\ve z\in \C^{2r}\\ \ve z\ne 0}}
		\frac{\sum_{j=1}^r \ve z_j^*[\hma M_j - \ma B_j^* \ma C_j^{-1} \ma B_j]\ve z_j}
		{\ve z^*\hma M(\ve\lambda)\ve z} \\
	\le 
		\frac{\sum_{j=1}^r \sigma_{\max}^2(\hma M_j - \ma B_j^* \ma C_j^{-1} \ma B_j)}
		{\sigma_{\min}^2(\hma M(\ve\lambda))}.
\end{multline}
The note the entries of the numerator are all $\order(\epsilon^2)$:
\begin{align}
	[ \hma M_j - \ma B_j^* \ma C_j^{-1} \ma B_j]_{1,1} & = 
		\frac{|\lambda_j -\xi_j|^2 p_1(\lambda_j, \xi_j, \conj\lambda_j, \conj\xi_j)}
		{3(\lambda_j + \conj\lambda_j)^2|\lambda_j+ \conj\xi_j|^4}; \\
	[ \hma M_j - \ma B_j^* \ma C_j^{-1} \ma B_j]_{1,2} & = 
		\frac{|\lambda_j -\xi_j|^2 p_2(\lambda_j, \xi_j, \conj\lambda_j, \conj\xi_j)}
		{(\lambda_j + \conj\lambda_j)^2(\lambda_j + \conj\xi_j)^3(\conj\lambda_j +\xi_j)^2 }; \\
	[ \hma M_j - \ma B_j^* \ma C_j^{-1} \ma B_j ]_{2,2} & = 
		\frac{|\lambda_j -\xi_j|^2 p_3(\lambda_j, \xi_j, \conj\lambda_j, \conj\xi_j)}
		{3(\lambda_j + \conj\lambda_j)^3|\lambda_j+ \conj\xi_j|^6};
\end{align}
where $p_1$, $p_2$, and $p_3$ are all polynomials.
As the denominator of~\cref{eq:Qangle_bound} is independent of $\epsilon$, 
we obtain the desired bound.
\end{proof}

\begin{lemma}\label{lem:poles}
	Suppose $\lbrace \widehat H_r^\ell\rbrace_{\ell=0}^\infty \subset \set R_r^+(\C)$
	has a limit $\widehat H_r^\ell \to \widehat H_r\in \set R_r^+(\C)$.
	If $\widehat H_r$ has $r$ distinct poles, 
	then the poles $\ve \lambda^\ell$ of $\widehat H_r^\ell$
	converge to the poles of $\ve \lambda$ of $\widehat H_r$.
\end{lemma}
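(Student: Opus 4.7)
The plan is to pass from $\Htwo$ convergence to pointwise convergence on $\C_+$, then to coefficient convergence of monic denominator polynomials, and finally to pole convergence via continuity of simple roots of polynomials.

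First I would invoke the reproducing kernel property from \cref{sec:properties:RKHS}: for every $\mu \in \C_+$,
\begin{equation*}
	|\widehat H_r^\ell(\mu) - \widehat H_r(\mu)|
	= |\langle v[\mu], \widehat H_r^\ell - \widehat H_r\rangle_\Htwo|
	\le \|v[\mu]\|_\Htwo \, \|\widehat H_r^\ell - \widehat H_r\|_\Htwo,
\end{equation*}
so $\widehat H_r^\ell(\mu) \to \widehat H_r(\mu)$ pointwise on $\C_+$.

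Next, write $\widehat H_r^\ell = p_\ell/q_\ell$ and $\widehat H_r = p/q$ with $q_\ell, q$ monic of degree $r$ and $p_\ell, p$ of degree at most $r-1$. Because $\widehat H_r$ has $r$ distinct poles, $p$ and $q$ are coprime. Fix $2r$ distinct points $\mu_1, \ldots, \mu_{2r} \in \C_+$ and consider the $2r\times 2r$ linear system obtained by writing $p_\ell(\mu_j) - \widehat H_r^\ell(\mu_j)\, q_\ell(\mu_j) = 0$ for $j=1,\ldots,2r$ as equations in the $2r$ unknown coefficients of $p_\ell$ together with the non-leading coefficients of $q_\ell$ (the monic normalization moves a known quantity to the right-hand side). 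The crucial step is to show invertibility of this system when the samples come from $\widehat H_r$: any kernel element would produce polynomials $P, Q$ of degree at most $r-1$ with $qP - pQ$ of degree at most $2r-1$ vanishing at $2r$ points and hence identically zero, whereupon coprimality of $p, q$ together with $\deg Q < \deg q$ forces $P = Q = 0$. Since the coefficient matrix depends continuously on the samples $\widehat H_r^\ell(\mu_j)$, it remains invertible for all sufficiently large $\ell$, and pointwise convergence of the samples then gives coefficient-wise convergence $q_\ell \to q$.

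Finally, with $q$ having simple roots, I would apply Rouché's theorem on disjoint disks about each $\lambda_k$ to conclude that for every large $\ell$ each such disk contains exactly one root of $q_\ell$; shrinking the disks then produces an ordering of $\ve\lambda^\ell$ with $\lambda_k^\ell \to \lambda_k$ for each $k$. The main technical obstacle is the invertibility argument for the L\"owner-type system, which reduces to the elementary principle that a rational function of exact degree $(r-1, r)$ in lowest terms is determined by $2r$ generic point evaluations; once coefficient convergence is in hand, the root-continuity step is standard.
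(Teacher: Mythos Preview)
Your proof is correct and takes a genuinely different route from the paper. The paper argues in two sentences: since rational functions with higher-order poles are nowhere dense in $\set R_r^+(\C)$, the iterates eventually have only simple poles; and since the pole--residue representation is unique up to permutation, $\Htwo$ convergence of $\widehat H_r^\ell$ to $\widehat H_r$ forces $\lambda_j^\ell\to\lambda_j$. This is really an appeal to the pole--residue map being a local homeomorphism near a simple-pole point, asserted rather than proved. Your argument is more explicit and self-contained: you first convert $\Htwo$ convergence to pointwise convergence on $\C_+$ via the reproducing kernel, then recover coefficient convergence of the monic denominators by solving a $2r\times 2r$ linear system whose invertibility at the limit follows from coprimality of $p$ and $q$, and finally invoke Rouch\'e for continuity of simple roots. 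A side benefit of your route is that you do not need to argue separately that $\widehat H_r^\ell$ eventually has simple poles: invertibility of the linear system already forces $q_\ell$ to have exact degree $r$ for large $\ell$, and Rouch\'e on disjoint disks then shows each root is simple. The paper's version is shorter; yours supplies the mechanism the paper leaves implicit.
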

\begin{proof}
By assumption, we can write $\widehat{H}_r$ in pole-residue form 
\begin{equation}
	\widehat{H}_r(z) = \sum_{j=1}^r \rho_j (z - \lambda_j)^{-1}
	\quad \text{where} \quad 
	|\rho_j| > 0
\end{equation}
as otherwise $\widehat{H}_r$ would have fewer than $r$ poles.
As rational functions with higher order poles are nowhere dense in $\set R_r^+(\C)$~\cite[p.~2739]{DGA10}
and $\widehat H_r^\ell \to \widehat H_r$
there is some $N>0$ for which all $\widehat{H}_r^\ell$ with $\ell>N$
has no higher order poles and thus
\begin{equation}
	\widehat{H}_r^\ell(z) = \sum_{j=1}^r \rho_j^{\, \ell} (z - \lambda_j^\ell)^{-1}
	\quad \forall \ell > N_1.
\end{equation}
As the pole-residue form is unique up to permutations, 
then as $\widehat H_r^\ell \to \widehat H_r$,
${\lambda}_j^{\ell} \to {\lambda}_j$
using an appropriate ordering of ${\lambda}_j^\ell$.
\end{proof}

\begin{proof}[Proof of \cref{thm:conv}]
We will show that the limit $\widehat H_r$
satisfies the first order optimality conditions~\cref{eq:opt_orig_Q}:
namely $\|Q(\ve \lambda)[H - \widehat H_r]\|_\Htwo = 0$
where $\ve\lambda$ denotes the poles of $\widehat H_r$.

Beginning by inserting an additive identity $0 = \widehat H_r^\ell - \widehat H_r^\ell$
and applying the triangle inequality:
\begin{align}
	\label{eq:conv_split1}
	\|Q(\hve \lambda)[H - \widehat H_r]\|_\Htwo
	& \le 
	\|Q(\hve \lambda)[H - \widehat H_r^\ell]\|_\Htwo
	+ \|Q(\hve \lambda)[\widehat H_r - \widehat H_r^\ell]\|_\Htwo.
\end{align}
Inserting a multiplicative identity 
$I = P(\ve\mu^\ell) + [I - P(\ve\mu^\ell)]$
into the first term and again applying the triangle inequality,
\begin{equation}\label{eq:conv_split2}
	\|Q(\ve \lambda)[H - \widehat H_r^\ell]\|_\Htwo 
	\le 
	\|Q(\ve \lambda)P(\ve\mu^\ell)[H - \widehat H_r^\ell]\|_\Htwo \\
	+
	\|Q(\ve \lambda)[I - P(\ve\mu^\ell)][H - \widehat H_r^\ell]\|_\Htwo
\end{equation}
Once again we insert a multiplicative identity, 
this time $I = Q(\ve \lambda^\ell) + [I - Q(\ve \lambda^\ell)]$
where $\ve\lambda^\ell$ denotes the poles of $\widehat H_r^\ell$
into the first term above yielding
\begin{multline}\label{eq:conv_split3}
	\|Q(\ve \lambda)P(\ve\mu^\ell)[H - \widehat H_r^\ell]\|_\Htwo
	\le 
	\|Q(\ve \lambda)Q(\ve \lambda^\ell) P(\ve\mu^\ell)[H - \widehat H_r^\ell]\|_\Htwo  \\
	+
	\|Q(\ve\lambda)[ I - Q (\ve\lambda^\ell)] P(\ve\mu^\ell)[H - \widehat H_r^\ell]\|_\Htwo. 
\end{multline}
Since $\widehat H_r^\ell$ satisfies the first order optimality conditions,
$Q(\ve \lambda^\ell)P(\ve\mu^\ell)[H - \widehat H_r^\ell] = 0$ and the first term vanishes.
Combining the bounds in \cref{eq:conv_split1}, \cref{eq:conv_split2}, and \cref{eq:conv_split3}
\begin{multline}
	\|Q(\ve \lambda)[H - \widehat H_r]\|_\Htwo
	\le  
		\|Q(\ve\lambda)[ I - Q (\ve\lambda^\ell)] P(\ve\mu^\ell)[H - \widehat H_r^\ell]\|_\Htwo
		+ \\
		\|Q(\ve \lambda)[I - P(\ve\mu^\ell)][H - \widehat H_r^\ell]\|_\Htwo
		+
		\|Q(\ve \lambda)[\widehat H_r - \widehat H_r^\ell]\|_\Htwo.
\end{multline}
Then as the induced $\Htwo$ norm is submultiplicative and $\|Q(\ve \lambda) \|_\Htwo=1$,
\begin{multline}\label{eq:conv:split}
	\|Q(\ve \lambda)[H - \widehat H_r]\|_\Htwo
	 \le 
	\|Q(\ve\lambda)[ I - Q (\ve\lambda^\ell)]\|_\Htwo
	\|H - \widehat H_r^\ell\|_\Htwo 
	\\
	+\|Q(\ve \lambda)[I - P(\ve\mu^\ell)]\|_\Htwo
	\|H - \widehat H_r^\ell\|_\Htwo
	+
	\|\widehat H_r -\widehat H_r^\ell\|_\Htwo.
\end{multline}

We now show the each of the terms terms of~\cref{eq:conv:split} vanish as $\ell \to \infty$.
The first term is multiplied by the subspace angle between the tangent space
of $\widehat H_r$ and the current iterate $\widehat H_r^\ell$:
\begin{equation}
	\sin \phi_{\max}(\set T(\ve\lambda), \set T(\ve \lambda^\ell)) = \|Q(\ve \lambda)[I - Q(\ve \lambda^\ell)]\|_\Htwo.
\end{equation}
By \cref{lem:poles} we know $\ve \lambda^\ell \to \ve \lambda$
and hence for any $\epsilon>0$ there is an $N_1$ such that
$| \lambda_j - \lambda_j^\ell|\le \epsilon$
for all $\ell > N_1$.
Then using \cref{lem:Qangle_full} we can show
\begin{equation}\label{eq:conv:split1}
	\|Q(\ve \lambda)[I - Q(\ve \lambda^\ell)]\|_\Htwo
	\le 
	C_1 \epsilon
	\quad 
	\forall \ell > N_1.
\end{equation}

The second term in~\cref{eq:conv:split} is
multiplied by the subspace angle between
the projection subspace and the tangent subspace
\begin{equation}
	\sin\phi_{\max} (\set T(\ve\lambda), \set V(\ve\mu^\ell)) =
	\|Q(\ve \lambda)[I - P(\ve\mu^\ell)]\|_\Htwo.
\end{equation}
By assumption $\lbrace \ve\mu_\star^\ell\rbrace_{\ell=0}^\infty$
has limit points $-\ove \lambda$.
Thus for any $\epsilon > 0$, there exists $N_2$ such that 
$|\conj \lambda_j^\ell + \mu_{j,t}| \le  \epsilon$ for $t=1,2,3$
where $\mu_{j,t}$ is in $\ve\mu^\ell$.
Then invoking \cref{thm:angle}, there is a constant $C_2 > 0$ such that
\begin{equation}\label{eq:conv:split2}
	\|Q(\ve\lambda) [I - P(\ve\mu^\ell)]\|_\Htwo \le C_2 \epsilon
	\quad \forall \ell > N_2.
\end{equation}

The third term in~\cref{eq:conv:split} vanishes
as by assumption $\widehat H_r^\ell \to \widehat H_r$.
Hence there exists $N_3 > 0$ such that
\begin{equation}\label{eq:conv:split3}
	\|\widehat H_r^\ell - \widehat H_r \|_\Htwo \le \epsilon
	\quad \forall \ell > N_3.
\end{equation}

Finally, we bound~\cref{eq:conv:split}.
Using \cref{eq:conv:split1}, \cref{eq:conv:split2}, and \cref{eq:conv:split3}
in \cref{eq:conv:split}
\begin{equation}
	\|Q(\ve \lambda)[H - \widehat H_r]\|_\Htwo
	 \le 
	\|H - \widehat H_r^\ell\|_\Htwo C_1 \epsilon + 
	\|H - \widehat H_r^\ell\|_\Htwo C_2 \epsilon +
	\epsilon
	\quad
	 \forall \ell > \max \lbrace N_1, N_2, N_3 \rbrace.
\end{equation}
Further as $\widehat H_r^\ell \to \widehat H_r$, 
there is some $C_4$ such that 
\begin{equation}
	\|H - \widehat H_r^\ell\| \le C_4 \quad 
	\forall \ell > N_4 
\end{equation}
Finally, combining these results
\begin{equation}
	\| Q(\ve\lambda)[H - \widehat H_r]\|_\Htwo
	\le [(C_1+C_2)C_4 + 1] \epsilon.
\end{equation}
As the choice of $\epsilon$ was arbitrary,
$\|Q(\ve\lambda)[ H - \widehat H_r]\|_\Htwo = 0$.
\end{proof}

\section*{Acknowledgements}
The authors would like to thank 
Christopher Beattie,
Zlatko Drma\v{c},
Mark Embree, 
Serkan Gugercin,
Christian Himpe,
Petar Mlinari\'c, 
Neeraj Sarna,
and the anonymous reviewers 
for their feedback during the preparation of this manuscript.
The first author would like to also thank the Einstein Foundation Berlin
and Paul Constantine for their support.

\bibliographystyle{siamplain}
\bibliography{ph2, master, abbrevjournals}

\begin{thebibliography}{10}

\bibitem{ASGC12}
{\sc K.~Ahuja, E.~de~Sturler, S.~Gugercin, and E.~R. Chang}, {\em Recycling
  {BiCG} with an application to model reduction}, SIAM J. Sci. Comput., 34
  (2012), pp.~A1925--A1949, \url{https://doi.org/10.1137/100801500}.

\bibitem{AA90}
{\sc B.~D.~O. Anderson and A.~C. Antoulas}, {\em Rational interpolation and
  state-variable realizations}, Linear Algebra Appl., 138 (1990), pp.~479--509,
  \url{https://doi.org/10.1016/0024-3795(90)90140-8}.

\bibitem{Ant05}
{\sc A.~C. Antoulas}, {\em Approximation of Large-Scale Dynamical Systems},
  SIAM, Philadelphia, PA, 2005, \url{https://doi.org/10.1137/1.9780898718713}.

\bibitem{Aro50}
{\sc N.~Aronszajn}, {\em Theory of reproducing kernels}, Trans. Amer. Math.
  Soc., 68 (1950), pp.~337--404,
  \url{https://doi.org/10.1090/S0002-9947-1950-0051437-7}.

\bibitem{BG09}
{\sc C.~Beattie and S.~Gugercin}, {\em Interpolatory projection methods for
  structure-preserving model reduction}, Systems Control Lett., 58 (2009),
  pp.~225--232, \url{https://doi.org/10.1016/j.sysconle.2008.10.016}.

\bibitem{BG12}
{\sc C.~Beattie and S.~Gugercin}, {\em Realization-independent
  $\mathcal{H}_2$-approximation}, in Decision and Control (CDC), 2012 IEEE 51st
  Annual Conference on, IEEE, 2012, pp.~4953--4958,
  \url{https://doi.org/10.1109/CDC.2012.6426344}.

\bibitem{BG17}
{\sc C.~Beattie and S.~Gugercin}, {\em Model reduction by rational
  interpolation}, in Model Reduction and Approximation, P.~Benner, A.~Cohen,
  M.~Ohlberger, and K.~Wilcox, eds., SIAM, 2017, ch.~7, pp.~297--334,
  \url{https://doi.org/10.1137/1.9781611974829.ch7}.

\bibitem{BGW12}
{\sc C.~Beattie, S.~Gugercin, and S.~Wyatt}, {\em Inexact solves in
  interpolatory model reduction}, Linear Algebra Appl., 436 (2012),
  pp.~2916--2943, \url{https://doi.org/10.1016/j.laa.2011.07.015}.

\bibitem{BG06}
{\sc C.~A. Beattie and S.~Gugercin}, {\em Inexact solves in {K}rylov-based
  model reduction}, in Decision and Control, 2006 45th IEEE Conference on,
  IEEE, 2006, pp.~3405--3411, \url{https://doi.org/10.1109/CDC.2006.376798}.

\bibitem{BG09a}
{\sc C.~A. Beattie and S.~Gugercin}, {\em A trust region method for optimal
  {$\mathcal{H}_2$} model reduction}, in Joint 48th IEEE Conference on Decision
  and Control and 28th Chinese Control Conference, 2009, pp.~5370--5375,
  \url{https://doi.org/10.1109/CDC.2009.5400605}.

\bibitem{BCOW17}
{\sc P.~Benner, A.~Cohen, M.~Ohlberger, and K.~Wilcox}, eds., {\em Model
  Reduction and Approximation: Theory and Algorithms}, SIAM, Philadelphia, PA,
  2017, \url{https://doi.org/10.1137/1.9781611974829}.

\bibitem{BG73}
{\sc {\AA}.~Bj{\"o}rck and G.~H. Golub}, {\em Numerical methods for computing
  angles between linear subspaces}, Math. Comput., 27 (1973), pp.~579--594,
  \url{https://doi.org/10.1090/S0025-5718-1973-0348991-3}.

\bibitem{BKO02}
{\sc T.~Boros, T.~Kailath, and V.~Olshevsky}, {\em Pivoting and backwards
  stability of fast algorithms for solving {C}auchy linear equations}, Linear
  Algebra Appl., 343-344 (2002), pp.~63--99,
  \url{https://doi.org/10.1016/S0024-3795(01)00519-5}.

\bibitem{B87}
{\sc J.~P. Boyd}, {\em Exponentially convergent {F}ourier-{C}hebshev quadrature
  schemes on bounded and infinite intervals}, J. Sci. Comput., 2 (1987),
  pp.~99--109, \url{https://doi.org/10.1007/BF01061480}.

\bibitem{BCY99}
{\sc M.~A. Branch, T.~F. Coleman, and Y.~Li}, {\em A subspace, interior, and
  conjugate gradient method for large-scale bound-constrained minimization
  problems}, SIAM J. Sci. Comput., 21 (1999), pp.~1--23,
  \url{https://doi.org/10.1137/S1064827595289108}.

\bibitem{BWG08}
{\sc T.~Bui-Thanh, K.~Willcox, and O.~Ghattas}, {\em Model reduction for
  large-scale systems with high-dimensional parametric input space}, SIAM J.
  Sci. Comput., 30 (2008), pp.~3270--3288,
  \url{https://doi.org/10.1137/070694855}.

\bibitem{CPL16}
{\sc A.~Castagnotto, H.~K. Panzer, and B.~Lohmann}, {\em Fast {H2}-optimal
  model order reduction exploiting the local nature of {K}rylov-subspace
  methods}, in Control Conference (ECC), 2016 European, IEEE, 2016,
  pp.~1958--1969, \url{https://doi.org/10.1109/ECC.2016.7810578}.

\bibitem{CD02}
{\sc Y.~Chahlaoui and P.~V. Dooren}, {\em A collection of benchmark examples
  for model reduction of linear time invarant dynamical systems}, tech. report,
  SLICOT, Feb. 2002,
  \url{http://slicot.org/objects/software/reports/SLWN2002-2.ps.gz}.
\newblock Working Note 2002-2.

\bibitem{SGKCBO15}
{\sc E.~De~Sturler, S.~Gugercin, M.~E. Kilmer, S.~Chaturantabut, C.~Beattie,
  and M.~O'Connell}, {\em Nonlinear parametric inversion using interpolatory
  model reduction}, SIAM J. Sci. Comput., 37 (2015), pp.~B495--B517,
  \url{https://doi.org/10.1137/130946320}.

\bibitem{Dem00}
{\sc J.~Demmel}, {\em Accurate singular value decompositions of structured
  matrices}, SIAM J. Matrix Anal. Appl., 21 (2000), pp.~562--580,
  \url{https://doi.org/10.1137/S0895479897328716}.

\bibitem{DGA08}
{\sc P.~V. Dooren, K.~A. Gallivan, and P.-A. Absil}, {\em
  $\mathcal{H}_2$-optimal model reduction of {MIMO} systems}, Appl. Math.
  Lett., 21 (2008), pp.~1267--1273,
  \url{https://doi.org/10.1016/j.aml.2007.09.015}.

\bibitem{DGA10}
{\sc P.~V. Dooren, K.~A. Gallivan, and P.-A. Absil}, {\em
  $\mathcal{H}_2$-optimal model reduction with higher-order poles}, SIAM J.
  Matrix Anal. Appl., 31 (2010), pp.~2738--2753,
  \url{https://doi.org/10.1137/080731591}.

\bibitem{DGB15}
{\sc Z.~Drma\v{c}, S.~Gugercin, and C.~Beattie}, {\em Quadrature-based vector
  fitting for discretized {$\mathcal{H}_2$} approximation}, SIAM J. Sci.
  Comput., 37 (2015), pp.~A625--A652, \url{https://doi.org/10.1137/140961511}.

\bibitem{FBG12}
{\sc G.~Flagg, C.~Beattie, and S.~Gugercin}, {\em Convergence of the iterative
  rational krylov algorithm}, Systems Control Lett., 61 (2012), pp.~688--691.

\bibitem{G04}
{\sc W.~Gawronski}, {\em Advanced Structural Dynamics and Active Control of
  Structures}, Springer Science \& Business Media, 2004.

\bibitem{GP73}
{\sc G.~H. Golub and V.~Pereyra}, {\em The differentiation of pseudo-inverses
  and nonlinear least squares problems whose variables separate}, SIAM J.
  Numer. Anal., 10 (1973), pp.~413--432, \url{https://doi.org/10.1137/0710036}.

\bibitem{Gri97}
{\sc E.~J. Grimme}, {\em {K}rylov Projection Methods for Model Reduction}, PhD
  thesis, University of Illinois at Urbana-Champaign, 1997.

\bibitem{GAB08}
{\sc S.~Gugercin, A.~C. Antoulas, and C.~Beattie}, {\em {$\mathcal{H}_2$} model
  reduction for large-scale linear dynamical systems}, SIAM J. Matrix Anal.
  Appl., 30 (2008), pp.~609--638, \url{https://doi.org/10.1137/060666123}.

\bibitem{GS99}
{\sc B.~Gustavsen and A.~Semlyen}, {\em Rational approximation of frequency
  domain responses by vector fitting}, IEEE T. Power Deliver., 14 (1999),
  pp.~1052--1061, \url{https://doi.org/10.1109/61.772353}.

\bibitem{Hok17}
{\sc J.~M. Hokanson}, {\em Projected nonlinear least squares for exponential
  fitting}, SIAM J. Sci. Comput., 39 (2017), pp.~A3107--A3128,
  \url{https://doi.org/10.1137/16M1084067}.

\bibitem{HM18y}
{\sc J.~M. Hokanson and C.~C. Magruder}, {\em Least squares rational
  approximation}, 2018, \url{https://arxiv.org/abs/1811.12590}.

\bibitem{scipy}
{\sc E.~Jones, T.~Oliphant, P.~Peterson, et~al.}, {\em {SciPy}: Open source
  scientific tools for {Python}}, 2001--, \url{http://www.scipy.org/}.
\newblock [Online; accessed 1 Oct 2018].

\bibitem{Kat95}
{\sc T.~Kato}, {\em Perturbation theory for linear operators}, Springer-Verlag,
  Berlin, 1995.
\newblock Reprint of the 1980 edition.

\bibitem{LSY98}
{\sc R.~B. Lehoucq, D.~C. Sorensen, and C.~Yang}, {\em {ARPACK} User's Guide:
  Solution of Large-Scale Eigenvalue Problems with Implicitly Restarted Arnoldi
  Methods}, SIAM, Philadelphia, 1998,
  \url{https://doi.org/10.1137/1.9780898719628}.

\bibitem{ML67}
{\sc L.~Meier and D.~Luenberger}, {\em Approximation of linear constant
  systems}, IEEE T. Auomat. Contr., 12 (1967), pp.~585--588,
  \url{https://doi.org/10.1109/TAC.1967.1098680}.

\bibitem{Mei65}
{\sc L.~{Meier III}}, {\em Approximation of Linear Constant Systems by Linear
  Constant Systems of Lower Order}, PhD thesis, Stanford University, 1965.

\bibitem{NST18}
{\sc Y.~Nakatsukasa, O.~S{\`e}te, and L.~N. Trefethen}, {\em The {AAA}
  algorithm for rational approximation}, SIAM J. Sci. Comput., 40 (2018),
  pp.~A1494--A1522, \url{https://doi.org/10.1137/16M1106122}.

\bibitem{OKSG17}
{\sc M.~O'Connell, M.~E. Kilmer, E.~de~Sturler, and S.~Gugercin}, {\em
  Computing reduced order models via inner-outer {K}rylov recycling in diffuse
  optical tomography}, SIAM J. Sci. Comput., 39 (2017), pp.~B272--B297,
  \url{https://doi.org/10.1137/16M1062880}.

\bibitem{SK63}
{\sc C.~K. Sanathanan and J.~Koerner}, {\em Transfer function synthesis as a
  ratio of two complex polynomials}, IEEE T. Auomat. Contr., 8 (1963),
  pp.~56--58, \url{https://doi.org/10.1109/TAC.1963.1105517}.

\bibitem{SS10}
{\sc P.~J. Schreier and L.~L. Scharf}, {\em Statistical Signal Processing of
  Complex-Valued Data: Theory of Improper and Noncircular Signals}, Cambridge
  University Press, Cambridge, 2010.

\bibitem{Shi16}
{\sc G.~Shi}, {\em On the nonconvergence of the vector fitting algorithm}, IEEE
  T. Circuits-II, 63 (2016), pp.~718--722.

\bibitem{SV16}
{\sc T.~Stykel and A.~Vasilyev}, {\em A two-step model reduction approach for
  mechanical systems with moving loads}, J. Comput. Appl. Math., 297 (2016),
  pp.~85--97, \url{https://doi.org/10.1016/j.cam.2015.11.014}.

\bibitem{WPL13}
{\sc T.~Wolf, H.~K.~F. Panzer, and B.~Lohmann}, {\em $\mathcal{H}_2$
  pseudo-optimality in model order reduction by {K}rylov subspace methods}, in
  2013 European Control Conference, 2013, pp.~3427--3432,
  \url{https://doi.org/10.23919/ECC.2013.6669585}.

\bibitem{YWS85}
{\sc A.~Yousuff, D.~A. Wagie, and R.~E. Skelton}, {\em Linear system
  approximation via covariance equivalent realizations}, J. Math. Anal. Appl.,
  106 (1985), pp.~91--115, \url{https://doi.org/10.1016/0022-247X(85)90133-7}.

\end{thebibliography}

\end{document}